\pgfplotsset{compat=1.15}
\theoremstyle{plain}
	\newtheorem{Theo}{Theorem}[subsection] 
	\newtheorem{Prop}[Theo]{Proposition}        
	\newtheorem{Lem}[Theo]{Lemma}            
	\newtheorem{Cor}[Theo]{Corollary}
	\newtheorem{Conj}[Theo]{Conjecture}
\theoremstyle{definition}
	\newtheorem{Def}[Theo]{Definition}
	\newtheorem{Nota}[Theo]{Notation}
\theoremstyle{remark}
	\newtheorem{Rema}[Theo]{Remark}
\def\RR{{\mathbb R}}    
\def\CC{{\mathbb C}}    
\newcommand{\Int}{\mbox{Int}}
\newcommand{\Vol}{\mbox{Vol}}
\newcommand{\KVol}{\mbox{KVol}}
\title[KVol on translation surfaces with multiple singularities]{Algebraic interaction strength for translation surfaces with multiple singularities}
\author{Julien Boulanger}
\address{Centro de Modelamiento matem\'atico,\\ Universidad de Chile \& IRL 2807 - CNRS.}
\email{jboulanger@cmm.uchile.cl}
\begin{document}

\begin{abstract}
We compute the maximal ratio of the algebraic intersection of two closed curves on two families of translation surfaces with multiple singularities. This ratio, called the interaction strength, is difficult to compute for translation surfaces with several singularities as geodesics can change direction at singularities. The main contribution of this paper is to deal with this type of surfaces. Namely, we study the interaction strength of the regular $n-$gons for $n \equiv 2 \pmod 4$ and the Bouw-M\"oller surfaces $S_{m,n}$ with $1 < \gcd(m,n) < n$. This answers a conjecture of the author and it completes the study of the algebraic interaction strength KVol on the regular polygon Veech surfaces. Our results on Bouw-M\"oller surfaces extends results of the author with Pasquinelli. This is also the first exact computation of KVol on translation surfaces with several singularities, and the pairs of curves that achieve the best ratio are singular geodesics made of two saddle connections with different directions.
\end{abstract}

\maketitle

\paragraph{Keywords.} Algebraic intersection, translation surfaces, geodesics.

\paragraph{MSc} 51H99

\section{Introduction}
In this paper we are interested in the intersection of closed curves on translation surfaces, and especially those made with (semi-)regular polygons. We aim to study the maximal possible number of intersections for two closed curves of given lengths. This is done by considering the so-called \emph{algebraic interaction strength} which is defined for a surface $X$ with a (say, Riemannian possibly with singularities) metric $g$ as
\[ \KVol(X,g) := \Vol(X) \cdot \sup_{\alpha,\beta} \frac{\Int(\alpha,\beta)}{l(\alpha)l(\beta)} \]
where the supremum is taken over pairs of closed curves on $X$, and $\Int(\alpha,\beta)$ represents the algebraic intersection of the closed curves $\alpha$ and $\beta$. This quantity has seen a recent surge of interest, whether it be for flat surfaces \cite{CKM, CKMcras, BLM22, Bou23, Bou23b, BP24}, hyperbolic surfaces\footnote{The former paper studies a similar quantity which is obtained by considering the geometric intersection instead of the algebraic intersection, see the discussion at the end of the introduction.} \cite{Torkaman, Jiang_Pan}, or in general \cite{MM}. In particular, $\KVol(X,g)$ is notoriously difficult to compute explicitely for a given surface $(X,g)$ and, with the purpose of finding examples, Cheboui-Kessi-Massart \cite{CKM, CKMcras} initiated the study of KVol on translation surfaces. In \cite{BLM22}, the author, E.~Lanneau and D.~Massart compute KVol on the double regular $n$-gon ($n$ odd) and its Teichm\"uller disk. Then, in \cite{Bou23}, we compute KVol on the Teichm\"uller disks of the regular $n$-gon $X_n$ for $n \equiv 0 \mod 4$, but we only provide non-sharp estimates in the case $n \equiv 2 \mod 4$. Namely,

\begin{Theo}{\cite[Theorems 1.2 and 1.6]{Bou23}}\label{theo:l_0}
For $n \geq 8$ even, we have
\[ \KVol(X_n) \leq \frac{n}{4 \tan (\pi /n)}, \]
and the bound is sharp if and only if $n \equiv 0 \mod 4$.
\end{Theo}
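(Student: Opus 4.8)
The plan is to treat the three assertions separately: the general upper bound for all even $n$, its attainment for $n \equiv 0 \pmod 4$, and its failure to be attained for $n \equiv 2 \pmod 4$. Throughout I use that $\Int(\alpha,\beta)$ depends only on the homology classes $[\alpha],[\beta] \in H_1(X_n,\ZZ)$, so that in the supremum defining $\KVol$ one may replace each curve by a length-minimizing geodesic representative of its class. On the Veech surface $X_n$ such minimizers are closed geodesics that are either core curves of cylinders or concatenations of saddle connections, and I expect the extremal pairs to be found among these. A second standing tool is that the dihedral symmetry group of the $n$-gon acts on $X_n$ by isometries of the flat metric; these preserve $\Vol$, $\Int$ and all lengths simultaneously, hence fix the ratio $\Int(\alpha,\beta)/\big(l(\alpha)l(\beta)\big)$, and I will use them to cut down the directions that need to be examined.

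For the upper bound I would use the cylinder-crossing estimate: if $X_n$ is decomposed into cylinders in the direction of a closed geodesic $\alpha$, and $\sigma$ is the core curve of such a cylinder of height $h$, then $|\Int(\sigma,\beta)| \le l(\beta)/h$ for every closed curve $\beta$, since each algebraic crossing of $\beta$ with the cylinder costs a segment of length at least $h$. After reducing to the case where the direction of $\alpha$ is a cylinder direction (these are dense on a Veech surface), combining this estimate with the decomposition of $[\alpha]$ into core curves of that periodic direction bounds $\Int(\alpha,\beta)$ by a product of the two lengths divided by the relevant cylinder moduli. Working this out in the explicit cylinder decomposition of the regular $n$-gon, normalising so that $\Vol(X_n)=\frac{n}{4}\cot\frac{\pi}{n}$, and optimising over the finitely many direction-pairs that survive after quotienting by the dihedral symmetry, should yield $\sup_{\alpha,\beta}\Int(\alpha,\beta)/\big(l(\alpha)l(\beta)\big)\le \tan^2\frac{\pi}{n}$, that is, the claimed $\KVol(X_n)\le\frac{n}{4}\tan\frac{\pi}{n}$.

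To prove sharpness when $n\equiv 0\pmod 4$, I would exhibit the extremal pair explicitly. The crucial feature of this case is that $4\mid n$, so rotation by $\pi/2$ lies in the dihedral symmetry group and is an isometry of $X_n$; consequently the horizontal and vertical cylinder decompositions are isometric. I would then take $\alpha$ to be a shortest horizontal closed geodesic and $\beta$ its vertical image under this rotation, check that they are genuine (non-singular) closed geodesics of minimal length in their classes and that they realise equality in the cylinder-crossing estimate, and compute directly that $\Int(\alpha,\beta)/\big(l(\alpha)l(\beta)\big)=\tan^2\frac{\pi}{n}$.

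The main obstacle is the case $n\equiv 2\pmod 4$, where one must show the inequality is strict. Here rotation by $\pi/2$ is no longer a symmetry of the $n$-gon, and the configuration that would saturate the cylinder-crossing bound cannot be realised by two ordinary closed geodesics: the geodesic playing the role of $\beta$ is forced through a singularity and must change direction there, so any closed representative in its class is strictly longer than the value for which equality holds. I would make this quantitative by analysing the saddle-connection data of $X_n$ in the two candidate directions and showing that the length of the true minimiser exceeds the would-be extremal length by a definite amount, giving $\KVol(X_n)<\frac{n}{4}\tan\frac{\pi}{n}$. Pinning down the actual extremal configuration, a singular geodesic built from two saddle connections in different directions, and computing the resulting exact value is precisely the harder problem addressed in the body of this paper.
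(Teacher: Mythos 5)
Your proposal does not follow the paper's route, and it has two genuine gaps. First, a structural one: the theorem you are proving is the one quoted from \cite{Bou23}, and the machinery behind it (recalled in Section~\ref{sec:preliminaries}) is a subdivision argument — cut each saddle connection where it crosses a side, bound every resulting segment below by $l_0$, and bound intersections combinatorially as in Proposition~\ref{prop:intersection_BOU23}. That argument applies to closed curves made of saddle connections in \emph{arbitrary} directions. Your upper bound instead relies on reducing to the case where $\alpha$ lies in a single periodic direction, so that $[\alpha]$ decomposes into core curves and the crossing estimate $|\Int(\sigma,\beta)|\le l(\beta)/h$ applies. This reduction is exactly what fails here: a homological length-minimizer is in general a concatenation of saddle connections in \emph{different} directions, and for $n\equiv 2 \pmod 4$ (two singularities) this case cannot be eliminated — indeed, by Theorem~\ref{theo:decagon} the extremal pairs on $X_n$ are unions of two sides in two distinct directions. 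The reduction to single (hence closed, hence periodic-direction) saddle connections is only legitimate when the surface has one singularity. Since the quoted bound is asserted for all even $n\ge 8$, including $n\equiv 2\pmod 4$, your argument omits precisely the hard case.

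Second, a quantitative one: even where the reduction is legitimate, the crossing estimate gives $\frac{\Int(\sigma,\beta)}{l(\sigma)l(\beta)}\le \frac{1}{h\, l(\sigma)}=\frac{1}{\mathrm{Area}(C)}$, hence at best $\KVol \le \Vol/\min_C \mathrm{Area}(C)$ over all cylinders of all periodic directions. On $X_n$ this is strictly weaker than the stated bound: the side-direction decomposition contains a thin outer cylinder ($C_1$ in Figure~\ref{fig:cylinders}) of height of order $l_0\sin\left(\frac{2\pi}{n}\right)$ and circumference of order $l_0$, so $\Vol/\mathrm{Area}(C_1)$ grows like $n^3$ while the claimed bound grows at most like $n^2$. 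The loss comes from charging each crossing of $\beta$ only the height of the thin cylinder instead of $l_0$; the subdivision method avoids this because it charges $\beta$ for its travel through the rest of the surface between crossings. Two smaller remarks: your extremal pair for $n\equiv 0\pmod 4$ (two sides exchanged by the rotation by $\pi/2$, crossing at the singularity) does coincide with the pairs described in the paper, so that part is sound in spirit; and your treatment of non-sharpness for $n\equiv 2\pmod 4$ is, as you acknowledge, only a gesture — ruling out equality for \emph{every} pair of closed curves is the content of Theorem 1.6 of \cite{Bou23}, and computing the true value is this paper's Theorem~\ref{theo:decagon}.
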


This is due to the fact that the regular $n$-gon for $n \equiv 2 \mod 4$ (and $n \geq 10$) is a translation surface with two distinct singularities, and the sides of the regular $n$-gon are not closed curves; contrary to the case $n \equiv 0 \mod 4$ for which sides are closed curves and pairs of sides intersecting at the singularity achieve the maximum in the definition of KVol. In this paper we compute the exact value of KVol on the regular $n$-gon for $n \equiv 2 \mod 4$, thus finishing the study of KVol in regular polygons. An interesting (and new) feature of this family of surfaces is that KVol is not achieved by (closed) saddle connections, but rather unions of non-closed saddle connections with different directions. Namely,

\begin{Theo}\label{theo:decagon}
Let $n \geq 10$, $n \equiv 2 \mod 4$. For every pair of closed curves $\gamma, \delta$ on the regular $n$-gon, we have:

\begin{equation}\label{eq:main}
\frac{\Int(\gamma,\delta)}{l(\gamma)l(\delta)} \leq \frac{1}{2 l_0^2} 
\end{equation}
where $l_0$ is the side-length of the $n$-gon.\newline

Further, equality is achieved if and only if $\gamma$ and $\delta$ are both made of two sides of the $n$-gon, and intersect at both singularities with the same sign. 
\end{Theo}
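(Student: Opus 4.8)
The plan is to recast the statement as the computation of $\KVol(X_n) = \Vol(X_n)\cdot \tfrac{1}{2l_0^2}$ and to reduce the supremum defining KVol to a finite optimisation over unions of saddle connections. Since the algebraic intersection $\Int(\gamma,\delta)$ depends only on the homology classes $[\gamma],[\delta]\in H_1(X_n;\ZZ)$, whereas the denominator $l(\gamma)l(\delta)$ can only decrease when one passes to shortest representatives, I may assume from the outset that $\gamma$ and $\delta$ are geodesics for the flat metric, i.e. finite concatenations of saddle connections that are allowed to turn only at the two singularities $A$ and $B$. A preliminary lemma then records the local geometry of $X_n$: its shortest saddle connections are exactly the $n$ sides, each of length $l_0$, and each of them joins the two distinct singularities $A$ and $B$, so that any closed curve built from sides and passing through a singularity uses at least two of them.

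Next I would set up the cylinder decompositions of $X_n$. Being a Veech surface, $X_n$ is completely periodic, so every direction containing a saddle connection cuts the surface into cylinders; in particular the sides of the $n$-gon occur in only $n/2$ directions, permuted by the dihedral isometry group of $X_n$, which does preserve the ratio $\Int(\gamma,\delta)/(l(\gamma)l(\delta))$. Using these finite symmetries I would normalise the direction of one distinguished saddle connection of the configuration to a fixed side direction and decompose $X_n$ into the associated cylinders. The core estimate then bounds $\Int(\gamma,\delta)$ by a weighted count of the cylinders that $\gamma$ traverses, and bounds $l(\gamma)$ and $l(\delta)$ from below by the corresponding circumferences and heights. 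The bookkeeping of which crossings are forced is done by recording which sides of the $n$-gon are trapped inside a cylinder, the so-called \sas of the decomposition, and this reduces the estimate to a finite optimisation over the combinatorial types of pairs. The extremal type turns out to be $\gamma = s_1\cup s_2$, $\delta = s_1'\cup s_2'$, each a loop through both $A$ and $B$ built from two sides, meeting once at $A$ and once at $B$ with the same sign, which yields $\Int(\gamma,\delta)=2$, $l(\gamma)=l(\delta)=2l_0$, and hence the ratio $2/(2l_0)^2 = 1/(2l_0^2)$.

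The main obstacle is precisely the feature emphasised in the introduction: because geodesics may turn at the singularities, the extremal curves are \emph{broken}, made of two sides pointing in different directions, so one cannot reduce everything to a single common direction for $\delta$. In addition, short closed geodesics that avoid the singularities must be shown not to beat the two-sides configuration; for instance the short diagonals of the $n$-gon are loops of length $2l_0\cos(\pi/n) < 2l_0$, strictly shorter than the extremal curves, so the optimum is genuinely not realised by the shortest closed geodesics and the balance between length and intersection is delicate. This forces a careful, sign-sensitive analysis of how two broken geodesics cross at a cone point, together with a verification that neither shortening a curve at the cost of fewer crossings, nor buying extra crossings at the cost of added length, can push the ratio above $1/(2l_0^2)$; controlling these competing effects uniformly in $n$ is the technical heart of the argument.

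Finally, to obtain the equality statement I would trace the chain of inequalities backwards. Equality in the length estimate forces each of $\gamma$ and $\delta$ to be a concatenation of exactly two sides, hence a loop through both $A$ and $B$; equality in the intersection estimate forces the two crossings, one at $A$ and one at $B$, to carry the same sign and excludes any further crossing; and the symmetry normalisation from the second step identifies all such extremal pairs with one another up to the dihedral symmetries of $X_n$. Together these give both the bound \eqref{eq:main} and the exact characterisation of the equality case.
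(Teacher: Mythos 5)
Your high-level skeleton — reduce to unions of saddle connections, subdivide against the polygon, play intersection counts against length lower bounds, then trace equality backwards — is indeed the shape of the paper's argument, but as written the proposal has a step that fails and a deferred core. The failing step is the symmetry normalisation: you cannot ``normalise the direction of one distinguished saddle connection of the configuration to a fixed side direction'' with the dihedral group, which is finite, while saddle connections occur in infinitely many directions. What the paper does instead is attach to \emph{each} saddle connection the sector $\Sigma_i$ containing its direction and the associated transition diagram/permutation $\sigma_i$; the sandwiched side is defined relative to that sector, with no normalisation of directions at all. More seriously, your core estimate — bounding $\Int(\gamma,\delta)$ by a weighted count of cylinder traversals and bounding lengths below by circumferences and heights — only reproduces the estimate of \cite{Bou23} (each crossing contributes length at least $l_0$ and at most one intersection per pair of segments), which is precisely the non-sharp bound of Theorem \ref{theo:l_0}; it cannot distinguish $n \equiv 2 \pmod 4$ from $n \equiv 0 \pmod 4$, and in particular cannot yield the constant $\tfrac{1}{2l_0^2}$.

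Everything you label ``the technical heart'' and leave unverified is, in fact, the proof. The paper classifies saddle connections into four types (sides; the exceptional connection $\Delta$; connections strictly inside a big cylinder; all others), proves the refined length estimates $l(\alpha) \geq \sqrt{2}\,n_\alpha + \varepsilon_0$ for type (4) and $l(\alpha) \geq 2\sqrt{2}\,n_\alpha + \varepsilon_1$ for type (3) by unfolding maximal trips through the short and big cylinders (Lemmas \ref{lem:length_consecutive_short} and \ref{lem:case3}, Proposition \ref{prop:study_lengths}), establishes the type-by-type intersection table of Proposition \ref{prop:study_intersections}, and then runs the case analysis of Section \ref{sec:conclusion}, where the surplus constants $\varepsilon_0,\varepsilon_1$ are exactly what pays for the one or two possible \emph{singular} intersections added on top of the segment-by-segment count. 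Nothing in your proposal produces these surpluses, and without them the inequality $\Int(\gamma,\delta) \leq \tfrac{1}{2}\,l(\gamma)l(\delta)$ (with $l_0 = 1$) does not follow; your own examples (the short diagonals, the competition between shorter curves and extra crossings) are flagged but never resolved. You also omit the preliminary reduction, via simplicity and convexity, that each closed curve may be taken to be either a single closed saddle connection or a union of exactly \emph{two} non-closed ones — this is what caps the number of singular intersections at $\min(\#\gamma,\#\delta) \leq 2$ and makes the finite case analysis (and hence the equality characterisation) possible.
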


And in particular,
\begin{Cor}
For $n \geq 10$ with $n \equiv 2 \mod 4$, we have
\[ \KVol(X_n) = \frac{n}{8 \tan (\pi /n)}. \]
\end{Cor}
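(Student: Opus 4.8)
My plan is to deduce the corollary directly from \Cref{theo:decagon}, so that no new geometry is needed. I would start from the definition
\[ \KVol(X_n) \;=\; \Vol(X_n)\cdot\sup_{\gamma,\delta}\frac{\Int(\gamma,\delta)}{l(\gamma)\,l(\delta)}, \]
where the supremum runs over all pairs of closed curves on $X_n$. The point is that \Cref{theo:decagon} delivers two things at once: the inequality \eqref{eq:main}, bounding every such ratio by $\frac{1}{2l_0^2}$, and its equality clause, which exhibits explicit pairs $\gamma,\delta$ — each made of two sides of the $n$-gon and meeting at both singularities with the same sign — that realize the ratio $\frac{1}{2l_0^2}$. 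From these I would conclude that the supremum is in fact attained and equals $\frac{1}{2l_0^2}$, whence
\[ \KVol(X_n) \;=\; \frac{\Vol(X_n)}{2l_0^2}. \]

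It then remains only to evaluate $\Vol(X_n)$. Since gluing opposite sides of the $n$-gon preserves area, this is just the area of the regular $n$-gon of side $l_0$, which I would compute by the standard decomposition into $n$ congruent isosceles triangles; substituting the resulting expression into the displayed identity produces the announced value $\frac{n}{8}\tan\frac{\pi}{n}$. As a cross-check I would compare with \Cref{theo:l_0}: the optimal ratio $\frac{1}{2l_0^2}$ found here is exactly half the ratio $\frac{1}{l_0^2}$ realized for $n\equiv 0\pmod 4$ by a pair of sides meeting once at the single singularity, so $\KVol(X_n)$ should come out to half of the corresponding value $\frac{n}{4}\tan\frac{\pi}{n}$, in agreement with $\frac{n}{8}\tan\frac{\pi}{n}$.

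I expect no genuine obstacle in this step: all of the difficulty is already absorbed into \Cref{theo:decagon}, and the corollary follows by a single substitution together with an elementary area computation. The only point that really requires the full force of \Cref{theo:decagon}, and not merely the inequality \eqref{eq:main}, is the attainment of the bound — it is precisely the explicit optimal pairs of two-sided curves that upgrade the a priori inequality $\KVol(X_n)\le\frac{n}{8}\tan\frac{\pi}{n}$ to the desired equality.
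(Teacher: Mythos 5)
Your route is exactly the paper's: the Corollary is stated with no separate proof precisely because it is the immediate deduction you describe, namely that \Cref{theo:decagon} (the inequality \eqref{eq:main} together with its equality case) shows the supremum in the definition of $\KVol$ equals $\frac{1}{2l_0^2}$ and is attained, so that $\KVol(X_n) = \frac{\Vol(X_n)}{2l_0^2}$. You are also right about the one subtle point: attainment requires knowing that curves satisfying the equality criterion actually exist, and this is supplied by the paper's construction in Section~2.2 (two-sided curves $\gamma = s_1 \cup s_2$, $\delta = t_1 \cup t_2$ with $s_1 < t_1 < s_2 < t_2$, which intersect at both singularities with the same sign).

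There is, however, one step that does not come out as you assert. The regular $n$-gon of side $l_0$ has apothem $\frac{l_0}{2}\cot\frac{\pi}{n}$, hence area $\Vol(X_n) = \frac{n l_0^2}{4}\cot\frac{\pi}{n}$, so carrying out your own plan literally yields $\KVol(X_n) = \frac{n}{8}\cot\frac{\pi}{n} = \frac{n}{8\tan(\pi/n)}$, not $\frac{n}{8}\tan\frac{\pi}{n}$. The same substitution applied to the case $n \equiv 0 \bmod 4$ (optimal ratio $\frac{1}{l_0^2}$ achieved by pairs of sides) gives $\frac{n}{4}\cot\frac{\pi}{n}$, so the discrepancy equally affects \Cref{theo:l_0} as quoted; the printed $\tan\frac{\pi}{n}$ is presumably a systematic slip for $\cot\frac{\pi}{n} = \tan\frac{(n-2)\pi}{2n}$. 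Consequently your cross-check (halving the $n \equiv 0 \bmod 4$ value) only verifies consistency between the two printed formulas — it is insensitive to the trigonometric factor — and your claim that the elementary area computation ``produces the announced value'' is the one assertion that fails if actually executed. The correct conclusion of your (and the paper's) argument is $\KVol(X_n) = \frac{\Vol(X_n)}{2l_0^2} = \frac{n}{8}\cot\frac{\pi}{n}$, and you should flag the mismatch with the stated formula rather than paper over it.
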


The proof of Theorem \ref{theo:decagon} relies on a subdivision method similar to the one used in \cite{Bou23} for the case $n \equiv 0 \mod 4$. Namely, given a closed geodesic $\alpha$ which is made of saddle connections, we will cut $\alpha$ every time it crosses a side of the regular $n$-gon, and we will group the obtained segments in order to control both the length of each group of segments and the (non-singular) intersections with other groups of segments obtained from the decomposition of a second closed geodesic $\beta$. We will build upon the intersection count done in \cite{Bou23} while refining both the length and intersections estimates. This will be obtained with a careful partition into sequences of segments as well as distinguishing several types of saddle connections. \newline 

In fact, our method extends to another family of surfaces, the \emph{Bouw-M\"oller surfaces}. These surfaces, made from semi-regular polygons, were discovered by Bouw and M\"oller \cite{BM10} in an algebraic setting and were described geometrically by Hooper \cite{Hooper}. Given two integers $m,n \geq 2$ with $(m,n) \neq (2,2)$ there is an associated Bouw-M\"oller surface made of a chain of $m$ semi-regular polygons, each of which has $2n$ sides (apart from two of them, which have $n$ sides). The algebraic interaction strength of these surfaces has been studied by the author and Pasquinelli in \cite{BP24} where $\KVol(S_{m,n})$ is computed for coprime $m,n$. Still, we were unable to compute the interaction strength for non-coprime entries. This paper provides the exact value of $\KVol(S_{m,n})$ for a large family of Bouw-M\"oller surfaces with non-coprime entries. Namely,

\begin{Theo}\label{theo:BM}
Let $m, n \geq 8$ such that $\gcd(m,n) \neq 1$. Then, for any pair of closed curves $\alpha,\beta$ on $S_{m,n}$, we have:
\[
\frac{\Int(\alpha,\beta)}{l(\alpha)l(\beta)} \leq \frac{1}{2l_0^2}
\] 
where $l_0 = \sin \frac{\pi}{m}$ is the length of the smallest sides in the standard polygonal decomposition of $S_{m,n}$. \newline

Further, when $\gcd(m,n) \neq n$, the equality is achieved by a pair of closed curves $\alpha,\beta$ which are both made of two sides of length $l_0$ and which intersect twice. 
\end{Theo}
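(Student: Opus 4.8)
The plan is to prove the upper bound via the subdivision method announced in the introduction, adapted from the regular $n$-gon case (Theorem \ref{theo:decagon}) to the semi-regular polygons making up $S_{m,n}$. Given two closed geodesics $\alpha,\beta$ on $S_{m,n}$, which we may assume are unions of saddle connections, I would cut each of them every time it crosses a side of the standard polygonal decomposition, thereby producing a finite collection of segments, each living inside a single semi-regular polygon. The aim is then to regroup these segments into bundles in such a way that two competing quantities are controlled simultaneously: a lower bound on the total length $l(\alpha)$, $l(\beta)$ in terms of the number of bundles, and an upper bound on the number of (transverse, non-singular) intersections between a bundle coming from $\alpha$ and a bundle coming from $\beta$. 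The combinatorial heart of the argument is to arrange these bundles so that each pair contributes at most one geometric intersection while each bundle has length at least $l_0$, which after summing over all pairs yields precisely the ratio $\tfrac{1}{2l_0^2}$ (the factor $\tfrac12$ reflecting that algebraic intersection is antisymmetric and that each unordered pair is counted appropriately).

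Concretely, I would first fix the geometry of the semi-regular $2n$-gons (and the two $n$-gons) explicitly, record the lengths of the two types of sides and the angles at the two singularities, and identify which saddle connections can have length close to $l_0$. Then I would classify the segments obtained from the subdivision according to the type of side they cross and the polygon they traverse, distinguishing the \emph{short} saddle connections (those of length exactly $l_0$, i.e. the candidate extremizers) from the longer ones. The key estimate will be a length-versus-intersection trade-off: a bundle that realizes many intersections with bundles of the other curve must be correspondingly long, with the short sides of length $l_0$ being exactly the borderline case where a single intersection is paid for by length exactly $l_0$. I expect to need separate treatment of intersections occurring inside a polygon versus those forced at or near the singularities, since the non-coprimality $1<\gcd(m,n)$ is precisely what creates the singularity structure allowing geodesics to change direction, and this is where the $n$-gon argument must be genuinely extended.

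The second part of the statement — the equality case when $\gcd(m,n)\neq n$ — I would handle by first exhibiting the extremal configuration explicitly, namely a pair $\alpha,\beta$ each formed of two sides of length $l_0$ meeting at the singularities, and verifying directly that it realizes the ratio $\tfrac{1}{2l_0^2}$ with exactly two intersection points. The existence of such a closed curve built from two short sides is exactly what requires $\gcd(m,n)\neq n$: one needs two distinct singularities so that a concatenation of two length-$l_0$ saddle connections in different directions closes up, mirroring the phenomenon observed in the regular $n$-gon for $n\equiv 2\pmod 4$. Conversely, for the "only if" direction I would trace back through the chain of inequalities in the upper bound and argue that equality forces every bundle to have length exactly $l_0$ and to contribute exactly one intersection, which pins down the extremal curves up to the described combinatorial type.

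The main obstacle I anticipate is the bookkeeping in the regrouping step: unlike the single-polygon situation, on $S_{m,n}$ a saddle connection can traverse several polygons and change direction at intermediate singularities, so the grouping of segments into length-$\geq l_0$ bundles that each meet an opposing bundle at most once is delicate, and controlling the error terms (longer saddle connections, segments near but not at a singularity) tightly enough to reach the \emph{sharp} constant $\tfrac{1}{2l_0^2}$ rather than a weaker estimate is precisely the difficulty that left the non-coprime case open in \cite{BP24}. I would therefore expect the bulk of the technical work to lie in establishing the precise per-bundle length and intersection inequalities, with the global bound and the equality analysis following once those local estimates are in place.
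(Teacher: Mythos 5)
Your overall skeleton (subdivide at side crossings, trade length against intersections, exhibit the extremal pair of two-side curves, with $\gcd(m,n)\neq n$ needed for its existence) matches the paper's strategy, but your central quantitative claim is wrong, and it is exactly where the factor $\tfrac12$ must come from. If each bundle has length at least $l_0$ and each pair of opposing bundles meets at most once, then $\Int(\alpha,\beta)\leq N_\alpha N_\beta$ while $l(\alpha)l(\beta)\geq N_\alpha N_\beta\, l_0^2$, which gives only $\frac{\Int(\alpha,\beta)}{l(\alpha)l(\beta)}\leq \frac{1}{l_0^2}$ --- off by a factor of $2$. The factor $\tfrac12$ cannot come from ``antisymmetry of the algebraic intersection'': there is no such halving (indeed the whole method bounds the algebraic intersection by the geometric one, assuming in the worst case that all signs agree). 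In the paper the $2$ comes from a per-segment length of $\sqrt{2}\,l_0$, not $l_0$: Proposition~\ref{prop:study_lengths_BM} shows that every saddle connection $\alpha$ which is not a side of $P(0)$ or $P(m-1)$ satisfies $l(\alpha)\geq\left(\sqrt{2}\,n_\alpha+(\sqrt{2}-1)\right)l_0$, where $n_\alpha$ counts non-adjacent segments plus pairs of adjacent segments; combined with $|\alpha\cap\beta|\leq n_\alpha n_\beta$ (Proposition~\ref{prop:intersections_BM}, imported from \cite{BP24}), the product of lengths is at least $2n_\alpha n_\beta\, l_0^2$ plus an excess, and the identity $2(\sqrt{2}-1)+(\sqrt{2}-1)^2=1$ shows this excess absorbs exactly the $+1$ coming from a possible singular intersection. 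Establishing that $\sqrt{2}$-per-segment estimate --- by classifying segments into types according to which polygons $P(i)$ and which sides their endpoints touch, and pairing each potentially short segment with a long neighbour without overlaps --- is the bulk of the proof, and nothing in your proposal produces it.

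A second gap concerns the sides of length $l_0$ themselves: they are genuine exceptions to any such length estimate and they are precisely the extremizers, so they cannot be absorbed as a ``borderline case'' of a uniform bundle inequality. The paper treats them separately: the sides appearing in $\gamma$ are grouped into a single object $\mathfrak{G}$ (using that they are pairwise non-adjacent, else $\gamma$ would not be length-minimizing in its homology class), and one proves $|\mathfrak{G}\cap\delta_j|\leq n_{\delta_j}-1$ (Lemma~\ref{lem:intersections_con_sides}); the saved intersection is what pays for the singular ones. Note also that in the extremal configuration the two intersections are both \emph{singular} --- they occur at the two singularities, not ``one per pair of bundles'' --- so your proposed equality analysis (every bundle of length exactly $l_0$ contributing exactly one transverse intersection) does not describe the actual equality case. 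Finally, a small correction on the hypotheses: the existence of two distinct singularities already follows from $\gcd(m,n)>1$; the condition $\gcd(m,n)\neq n$ is needed so that $P(0)$ (resp.\ $P(m-1)$) has at least two distinct sides joining the same pair of singularities, which is what allows each of $\alpha$ and $\beta$ to be a closed curve made of two sides, as constructed in Section~\ref{sec:lower_bound}.
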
  
The assumption $m, n \geq 8$ is mostly there for simplicity. In fact, the same result should hold for any $m,n$ non-coprime, but the adequate length estimates are more difficult to obtain for small values of $m$ and $n$, and there are more cases to consider. Furthermore, Theorem \ref{theo:BM} also holds for $m = 2$ and $n \geq 10$ (and in fact, the assumption $n \geq 8$ is sufficient): In this case, the proof can be directly adapted from the $4m+2$-gon case, as we are still dealing only with regular polygons. \newline

Finally, the equality case is not achieved when $\gcd(m,n) = n$. This is because the construction of the corresponding closed curves $\alpha$ and $\beta$ fail in this case. In fact, we conjecture:

\begin{Conj}
Let $m,n \geq 3$ with $(m,n) \neq (3,3)$ such that $m$ is a multiple of $n$ (equivalently $gcd(m,n) = n$). Then for any pair of closed curves $(\alpha, \beta)$ on $S_{m,n}$, we have:
\begin{equation}
\frac{\Int(\alpha, \beta)}{l(\alpha)l(\beta)} \leq \frac{1}{4l_0^2}
\end{equation}
where $l_0 = \sin \frac{\pi}{m}$ is the length of the smallest sides in the standard polygonal decomposition of $S_{m,n}$. \newline
Further, equality is achieved for two closed curves $\alpha, \beta$ both made of two sides of length $l_0$, and which intersect once.
\end{Conj}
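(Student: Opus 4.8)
The plan is to run the subdivision machinery of Theorems~\ref{theo:decagon} and~\ref{theo:BM}, and to show that when $n \mid m$ it produces an extra factor of $2$. Since the algebraic intersection $\Int(\alpha,\beta)$ depends only on the homology classes of $\alpha$ and $\beta$, whereas the length product $l(\alpha)l(\beta)$ does not, it suffices to bound the ratio for length-minimizing representatives, which are closed geodesics formed by concatenating saddle connections (and possibly turning at the cone points). I would then cut $\alpha$ and $\beta$ each time they cross a side of the standard polygonal decomposition of $S_{m,n}$ and regroup the resulting segments exactly as in the proof of Theorem~\ref{theo:BM}, so that each group has length bounded below by a multiple of $l_0$ and its number of non-singular intersections with the groups coming from the other curve is controlled by the two lengths.

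In the general non-coprime case these per-group estimates combine to give the constant $\frac{1}{2l_0^2}$, the extremal configuration of Theorem~\ref{theo:BM} being a curve made of two shortest sides crossing two analogous sides of the other curve twice. The decisive new input when $\gcd(m,n) = n$ is that this doubly-crossing configuration is geometrically forbidden. I would prove this by analysing how the shortest saddle connections (those of length $l_0 = \sin\frac{\pi}{m}$) emanate from the cone points of $S_{m,n}$: the multiplicity $m = kn$ pins down the cyclic order in which these connections appear around each singularity, and I would show that for any two closed curves each built from two of them, the four strands cannot be arranged around the cone points so as to create two transverse crossings. Thus the algebraic intersection of any extremal pair drops from $2$ to $1$, which is precisely the factor that divides the extremal ratio by two and explains the passage from $\frac{1}{2l_0^2}$ to $\frac{1}{4l_0^2}$.

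The main obstacle is turning this local rigidity into the global constant $\frac{1}{4l_0^2}$: one must redo the sharp length and intersection estimates, case by case on the type of saddle connection, under the additional constraint coming from $n \mid m$, and verify that no other family of groupings recovers the crossing lost at the singularities. Handling the small values $3 \le m,n \le 7$ allowed by the conjecture but excluded from Theorem~\ref{theo:BM} would require separate, more delicate length estimates, and I expect these to be the most technical part. Once the upper bound is secured, the equality statement is comparatively direct: I would exhibit two closed curves, each a concatenation of two shortest sides meeting at a cone point, check that they are genuine closed geodesics crossing exactly once, and compute $\Int(\alpha,\beta)/(l(\alpha)l(\beta)) = 1/((2l_0)(2l_0)) = \frac{1}{4l_0^2}$, matching the bound and confirming that the extremal pair intersects once rather than twice.
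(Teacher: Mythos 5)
This statement is a \emph{conjecture} in the paper: the author explicitly writes ``In fact, we conjecture'' and offers no proof of it, only the observation that the extremal construction of Theorem~\ref{theo:BM} fails when $\gcd(m,n)=n$. So there is no paper proof to compare against, and your proposal must stand on its own as a complete argument --- which it does not.

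The genuine gap is the passage from the local rigidity statement to the global upper bound. Your plan establishes (modulo details) that when $n \mid m$ the pair of curves made of two shortest sides cannot intersect twice, so that particular configuration only realizes the ratio $\frac{1}{4l_0^2}$. But this only shows that the bound $\frac{1}{2l_0^2}$ of Theorem~\ref{theo:BM} is not \emph{attained}; it does not lower the \emph{supremum} to $\frac{1}{4l_0^2}$. The subdivision machinery you invoke produces, for every pair of saddle connections, estimates of the form $\frac{n_\alpha n_\beta + O(1)}{2 n_\alpha n_\beta l_0^2 + \cdots}$, which tend to $\frac{1}{2l_0^2}$ as the number of segments grows; these bounds are structural to the method (segments really can have length close to $\sqrt{2}\,l_0$, and the intersection counts $n_\alpha n_\beta$ really can be approached), so eliminating one equality case leaves the supremum of the upper bounds at $\frac{1}{2l_0^2}$, a factor of two away from the conjectured constant. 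Closing this gap requires improving \emph{every} length or intersection estimate by essentially a factor of two under the hypothesis $n \mid m$, and you defer exactly this step (``one must redo the sharp length and intersection estimates, case by case\dots''), which is the entire content of the conjecture. A secondary gap: your equality construction is also nontrivial and unproven. When $\gcd(m,n)=n$, every vertex of $P(0)$ represents a \emph{distinct} singularity, so two distinct sides of $P(0)$ never join the same pair of cone points; this is precisely why the paper says the construction of the extremal curves ``fails in this case,'' and exhibiting two closed curves made of two sides of length $l_0$ that intersect exactly once requires an argument you have not given.
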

\begin{Rema}
When $(m,n)=(3,3)$, the resulting Bouw-M\"oller surface has genus one and we know from \cite{MM} that for any pair of closed curves $\alpha,\beta$ on $S_{3,3}$
\begin{equation}
\frac{\Int(\alpha, \beta)}{l(\alpha)l(\beta)} \leq \frac{1}{2\sqrt{3}l_0^2}
\end{equation}
and the inequality is optimal.
\end{Rema}

\subsection{History and motivations.}
The study of the quantity KVol goes back to \cite{these_massart}, although the name \emph{interaction strength} is much more recent as it comes from \cite{Torkaman}, where a similar quantity is studied (namely, the algebraic intersection is replaced by the geometric intersection). The original motivation for the study of $\KVol(X)$ lies in its relation with norms defined in the homology $H_1(X,\RR)$ of the surface $X$, and for which $\KVol(X)$ can be alternatively seen as the best comparison constant between the (equivalent) $L^2$ norm and the stable norm (up to a normalisation, see \cite{MM} for definitions and for a precise statement), or the maximal symplectic area of a parallelogram inscribed in the unit ball of the stable norm.\newline

As a consequence, the study of the interaction strength not only allows to get an estimate on the maximal possible (algebraic) intersection of two closed curves, but it also gives information on these two norms, and especially on the stable norm (which is still mysterious, see e.g. \cite{these_massart, Mas97b, Babenko_Balacheff, Balacheff_Massart, Montealegre24}). Then, one of the first questions one can ask concerns its explicit computation on a given surface. This first question turned out to be far from trivial, and this is the reason why Cheboui, Kessi and Massart \cite{CKM, CKMcras} initiated the study of KVol on translation surfaces, and more specifically on some square-tiled (or arithmetic) translation surfaces. 
Even in this case, the computation of KVol is not straightforward, and for example we do not even know the infimum value of KVol on translation surfaces of genus two with a single singularity (this is expected to be two, see \cite{CKMcras}).

In \cite{BLM22, Bou23} and \cite{BP24}, E.~Lanneau, D.~Massart, I.~Pasquinelli and the author develop a method to provide an upper bound on KVol for a large class of (half-)translation surfaces which are made out of convex polygons having obtuse angles, the estimate being sharp on Bouw-M\"oller surfaces with one singularity \cite{BP24}, and on regular $n$-gons for $n \equiv 0 \mod 4$, see \cite{Bou23}. This estimate also allows to compute KVol on some examples of translation surfaces with several distinct singularities, for example the covers of a regular $n$-gon for odd $n \geq 5$ presented in Figure \ref{fig:pentagon_cover}. However, the estimate of \cite[Theorem 1.3]{BP24} can only be sharp for translation surfaces where the value of KVol is achieved by closed saddle connections, and it also requires that sides of the same polygon are not identified together. In particular, it never allows to compute KVol on translation surfaces of genus two with two disctint singularities, as for singularities of angle $4\pi$, the fact that there is a closed saddle connection and that the surface has a polygonal decomposition made of polygons with only obtuse or right angles (as in \cite{BP24}) imply a self-identification on the sides of a polygon. Our computation of KVol on the regular decagon thus provides the first example of computation on a translation surface with two singularities of angle $4\pi$, 
and we also show that KVol is \emph{not} achieved by closed saddle connections but rather unions of two (non-closed) saddle connections in disctinct directions\footnote{One can always show that if KVol is a maximum, then it must be achieved as a union of at most $s$ saddle connections, where $s$ is the number of singularities of the surface, see Section \ref{sec:preliminaries}.}.

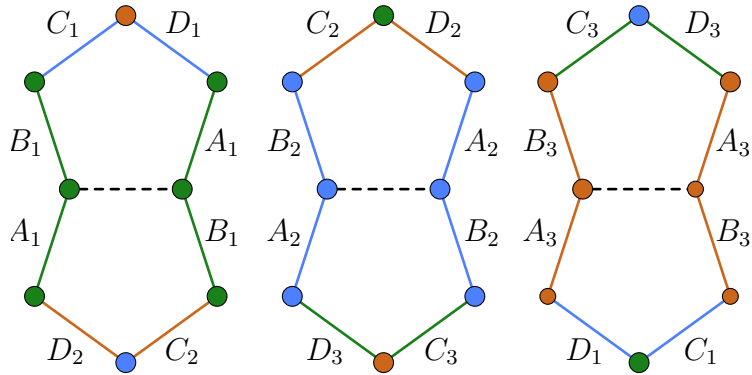
\begin{figure}
\center
\definecolor{qqwuqq}{rgb}{0.1,0.5,0.1}
\definecolor{qqqqff}{rgb}{0.3,0.5,1}
\definecolor{ccqqqq}{rgb}{0.8,0.4,0.1}
\begin{tikzpicture}[line cap=round,line join=round,>=triangle 45,x=1cm,y=1cm,rotate=90,scale =1.5]
\clip(-2,-0.5579655471739381) rectangle (2,6.0656283788948455);
\draw [line width=1pt,dash pattern=on 3pt off 3pt] (0,0)-- (0,1);
\draw [line width=1pt,color=ccqqqq] (0,1)-- (-0.9510565162951532,1.3090169943749475);
\draw [line width=1pt,color=qqqqff] (-0.9510565162951532,1.3090169943749475)-- (-1.5388417685876266,0.5);
\draw [line width=1pt,color=qqqqff] (-1.5388417685876266,0.5)-- (-0.9510565162951536,-0.30901699437494734);
\draw [line width=1pt,color=ccqqqq] (-0.9510565162951536,-0.30901699437494734)-- (0,0);
\draw [line width=1pt,color=ccqqqq] (0,0)-- (0.9510565162951532,-0.30901699437494745);
\draw [line width=1pt,color=qqwuqq] (0.9510565162951532,-0.30901699437494745)-- (1.5388417685876266,0.5);
\draw [line width=1pt,color=qqwuqq] (1.5388417685876266,0.5)-- (0.9510565162951536,1.3090169943749475);
\draw [line width=1pt,color=ccqqqq] (0.9510565162951536,1.3090169943749475)-- (0,1);
\draw [line width=1pt,dash pattern=on 3pt off 3pt] (0,2.2644593560306294)-- (0,3.2644593560306294);
\draw [line width=1pt,color=qqqqff] (0,3.2644593560306294)-- (-0.9510565162951532,3.573476350405577);
\draw [line width=1pt,color=qqwuqq] (-0.9510565162951532,3.573476350405577)-- (-1.5388417685876266,2.7644593560306294);
\draw [line width=1pt,color=qqwuqq] (-1.5388417685876266,2.7644593560306294)-- (-0.9510565162951536,1.955442361655682);
\draw [line width=1pt,color=qqqqff] (-0.9510565162951536,1.955442361655682)-- (0,2.2644593560306294);
\draw [line width=1pt,color=qqqqff] (0,2.2644593560306294)-- (0.9510565162951532,1.955442361655682);
\draw [line width=1pt,color=ccqqqq] (0.9510565162951532,1.955442361655682)-- (1.5388417685876266,2.7644593560306294);
\draw [line width=1pt,color=ccqqqq] (1.5388417685876266,2.7644593560306294)-- (0.9510565162951536,3.573476350405577);
\draw [line width=1pt,color=qqqqff] (0.9510565162951536,3.573476350405577)-- (0,3.2644593560306294);
\draw [line width=1pt,dash pattern=on 3pt off 3pt] (0,4.548977995466573)-- (0,5.548977995466573);
\draw [line width=1pt,color=qqwuqq] (0,5.548977995466573)-- (-0.9510565162951532,5.8579949898415205);
\draw [line width=1pt,color=ccqqqq] (-0.9510565162951532,5.8579949898415205)-- (-1.5388417685876266,5.048977995466573);
\draw [line width=1pt,color=ccqqqq] (-1.5388417685876266,5.048977995466573)-- (-0.9510565162951536,4.2399610010916255);
\draw [line width=1pt,color=qqwuqq] (-0.9510565162951536,4.2399610010916255)-- (0,4.548977995466573);
\draw [line width=1pt,color=qqwuqq] (0,4.548977995466573)-- (0.9510565162951532,4.2399610010916255);
\draw [line width=1pt,color=qqqqff] (0.9510565162951532,4.2399610010916255)-- (1.5388417685876266,5.048977995466573);
\draw [line width=1pt,color=qqqqff] (1.5388417685876266,5.048977995466573)-- (0.9510565162951536,5.8579949898415205);
\draw [line width=1pt,color=qqwuqq] (0.9510565162951536,5.8579949898415205)-- (0,5.548977995466573);
\draw (-0.6,6.2) node[anchor=south west] {$A_1$};
\draw (0.2,4.45) node[anchor=south west] {$A_1$};
\draw (0.2,6.2) node[anchor=south west] {$B_1$};
\draw (-0.6,4.45) node[anchor=south west] {$B_1$};
\draw (-0.6,3.9) node[anchor=south west] {$A_2$};
\draw (0.2,3.9) node[anchor=south west] {$B_2$};
\draw (0.2,2.15) node[anchor=south west] {$A_2$};
\draw (-0.6,2.15) node[anchor=south west] {$B_2$};
\draw (-0.6,1.63) node[anchor=south west] {$A_3$};
\draw (0.2,1.63) node[anchor=south west] {$B_3$};
\draw (0.2,-0.08) node[anchor=south west] {$A_3$};
\draw (-0.6,-0.08) node[anchor=south west] {$B_3$};
\draw (1.25,5.85) node[anchor=south west] {$C_1$};
\draw (1.25,4.8) node[anchor=south west] {$D_1$};
\draw (1.25,3.55) node[anchor=south west] {$C_2$};
\draw (1.25,1.25) node[anchor=south west] {$C_3$};
\draw (1.25,2.5) node[anchor=south west] {$D_2$};
\draw (1.25,0.2) node[anchor=south west] {$D_3$};
\draw (-1.65,0.2) node[anchor=south west] {$C_1$};
\draw (-1.65,1.25) node[anchor=south west] {$D_1$};
\draw (-1.65,4.8) node[anchor=south west] {$C_2$};
\draw (-1.65,5.85) node[anchor=south west] {$D_2$};
\draw (-1.65,3.55) node[anchor=south west] {$D_3$};
\draw (-1.65,2.5) node[anchor=south west] {$C_3$};
\begin{scriptsize}
\draw [fill=ccqqqq] (0,0) circle (2pt);
\draw [fill=ccqqqq] (0,1) circle (2.5pt);
\draw [fill=ccqqqq] (-0.9510565162951532,1.3090169943749475) circle (2pt);
\draw [fill=qqwuqq] (-1.5388417685876266,0.5) circle (2.5pt);
\draw [fill=ccqqqq] (-0.9510565162951536,-0.30901699437494734) circle (2pt);
\draw [fill=ccqqqq] (0.9510565162951532,-0.30901699437494745) circle (2.5pt);
\draw [fill=qqqqff] (1.5388417685876266,0.5) circle (2.5pt);
\draw [fill=ccqqqq] (0.9510565162951536,1.3090169943749475) circle (2.5pt);
\draw [fill=qqqqff] (0,2.2644593560306294) circle (2.5pt);
\draw [fill=qqqqff] (0,3.2644593560306294) circle (2.5pt);
\draw [fill=qqqqff] (-0.9510565162951532,3.573476350405577) circle (2.5pt);
\draw [fill=ccqqqq] (-1.5388417685876266,2.7644593560306294) circle (2.5pt);
\draw [fill=qqqqff] (-0.9510565162951536,1.955442361655682) circle (2.5pt);
\draw [fill=qqqqff] (0.9510565162951532,1.955442361655682) circle (2.5pt);
\draw [fill=qqwuqq] (1.5388417685876266,2.7644593560306294) circle (2.5pt);
\draw [fill=qqqqff] (0.9510565162951536,3.573476350405577) circle (2.5pt);
\draw [fill=qqwuqq] (0,4.548977995466573) circle (2.5pt);
\draw [fill=qqwuqq] (0,5.548977995466573) circle (2.5pt);
\draw [fill=qqwuqq] (-0.9510565162951532,5.8579949898415205) circle (2.5pt);
\draw [fill=qqqqff] (-1.5388417685876266,5.048977995466573) circle (2.5pt);
\draw [fill=qqwuqq] (-0.9510565162951536,4.2399610010916255) circle (2.5pt);
\draw [fill=qqwuqq] (0.9510565162951532,4.2399610010916255) circle (2.5pt);
\draw [fill=ccqqqq] (1.5388417685876266,5.048977995466573) circle (2.5pt);
\draw [fill=qqwuqq] (0.9510565162951536,5.8579949898415205) circle (2.5pt);
\end{scriptsize}
\end{tikzpicture}

\caption{A triple cover of the double regular pentagon with three singularities (of angle $6\pi$) and for which the algebraic interaction strenght can be computed from \cite{BP24} - and is $\frac{3n}{4\tan(\pi /n)}$ -. The same picture can be extended to any number of singularities.}
\label{fig:pentagon_cover}
\end{figure}

\subsection{Questions and conjectures.}
In light of Theorem \ref{theo:decagon} one could wonder as it is studied in \cite{BLM22, Bou23, BP24} how KVol behaves not only on the regular $n$-gon but also on its $SL_2(\RR)-$orbit. However, the main argument to compute of KVol in the orbit of the surfaces considered in \cite{BLM22} and \cite{Bou23} relies on the fact that the surfaces have a single singularity and hence the supremum in the definition of KVol can be considered as a supremum over saddle connections, which is not the case on the regular $n$-gon for $n \equiv 2 \mod 4$, or for Bouw-M\"oller surfaces with non-coprime entries. In fact, even the simpler question of the computation of KVol on the staircase model of the regular $n$-gon is still open for $n \equiv 2 \mod 4$, and it is not clear whether a subdivision method as in the present paper would help.


In another direction, one should notice that for the regular $n$-gon, the supremum in the definition of KVol is actually a maximum. This is consistent with Remark 1.6 of \cite{BLM22}, where a rough argument is given to support that KVol should be a maximum for Veech surfaces of genus at least two.

\subsection{Algebraic vs. geometric intersection.}
We conclude this introduction with a remark on the nature of the intersection used here. As we have seen, a motivation for considering the algebraic intersection comes from the relations with norms in homology, but one may similarly wonder what happens for the \emph{geometric interaction strength} which is defined as its algebraic counterpart but replacing the algebraic intersection with the geometric intersection. These two quantities are not the same in general, even for translation surfaces: for example, the surfaces $L(n,n)$ considered in \cite{CKMcras} have algebraic interaction strength going to two as $n$ goes to infinity while it is easily shown that their geometric interaction strength go to infinity with $n$. Another significative difference is that the algebraic intersection is defined at the homology level whereas the geometric intersection is defined at the free homotopy level, and in particular if one can assume the curves to be simple when studying the algebraic interaction strength this is not the case anymore for the geometric interaction strength.

However, several of the methods used to study one of them also gives information on the other. This is for example the case of all the subdivision methods used in \cite{BLM22, Bou23, BP24} and in the present paper, whose estimates on intersections rely on a count of the number of intersection points for saddle connections, assuming that, in the worst case, all the signs are the same. As a consequence, all the results of these papers can be extended to the geometric interaction strength, and in particular the algebraic and geometric interaction strength coincide on regular polygons (\cite{Bou23} and the present paper) and on all the examples of Bouw-M\"oller surfaces $S_{m,n}$ for which $\KVol$ is known (\cite{BLM22, BP24} and the present paper).

\subsection*{Outline.}
We first recall in Section~\ref{sec:preliminaries} useful background on translation surfaces, as well as the decomposition of a saddle connection into \emph{sandwiched} and \emph{non-sandwiched} segments from \cite{BLM22, Bou23}. Then, Section \ref{sec:decagon} is devoted to the proof of Theorem \ref{theo:decagon}: in Section \ref{sec:study_lengths} we study the length of saddle connections depending on the number of segments in their sandwiched/non-sandwiched decomposition and on four types of saddle connections. In Section \ref{sec:study_intersections} we study the non-singular intersections of pairs of saddle connections depending on their type and the number of segments, and we finally use both estimates to prove Theorem \ref{theo:decagon} in Section \ref{sec:conclusion}. Finally, Section \ref{sec:Bouw-Moller} is devoted to the study of Bouw-M\"oller surfaces and the proof of Theorem \ref{theo:BM}.

\subsection*{Acknowledgments.}
This work was supported by Centro de Modelamiento Matem\'atico (CMM) BASAL fund FB210005 for center of excellence from ANID-Chile. The author thanks Rodolfo Guti\'errez--Romo, Erwan Lanneau and Daniel Massart for discussions and comments related to this paper.


\section{Preliminaries on the regular \texorpdfstring{$n-$}{n}gon}\label{sec:preliminaries}
In this section we recall useful background on the regular $n$-gon translation surface, and we describe the decomposition of a saddle connection into \emph{sandwiched} and \emph{non-sandwiched} segments, following \cite{Bou23}. Then, we provide a general strategy for the proof of Theorems \ref{theo:decagon} and \ref{theo:BM}. 

\subsection{The regular \texorpdfstring{$n$-}{n-}gon translation surface}
A \emph{translation surface} $(X,\omega)$ is a real compact genus $g$ surface $X$ with an atlas $\omega$ such that all transition functions are translations except on a finite set of singularities $\Sigma$, along with a distinguished direction. In fact, it can also be seen as a surface obtained from a finite collection of polygons embedded in $\CC$ by identifying pairs of parallel opposite sides by translation. The resulting surface has a flat metric and a finite number of conical singularities.

For $n \geq 4$ even, one can construct a translation surface by identifying the opposite sides of a regular $n$-gon. The resulting surface has a well defined atlas of charts to $\RR^2$ where the transition functions are translations, except, for $n \geq 8$, at the vertices, which are then singularities. One can check that if $n \equiv 0 \mod 4$ then all vertices are identified to the same point whereas if $n \equiv 2 \mod 4$ the vertices split in two classes, and the resulting surface has two distinct singularities.

As a consequence of the definition, geodesics on translation surfaces are piecewise straight line segments, and they can only change direction at a singularity. Geodesic segments from a singularity to a singularity are referred to as \emph{saddle connections}, and such geodesics play a central role in the study of translation surfaces: every closed curve is freely homotopic (hence homologous) to the union of saddle connections. More, every free homotopy class has a representative element of minimal length which is the union of saddle connections. The reason for this is that a non-singular closed geodesic determines a flat \emph{cylinder} of freely homotopic trajectories, as in Figure \ref{fig:cylinders}, and the boundary of the cylinder is made of saddle connections. For example, the regular $4m+2$-gon is decomposed into $m$ cylinders in the direction of every side, see Figure \ref{fig:cylinders}.

As a consequence of the above, when studying the interaction strength on translation surfaces one can restrict to closed curves made of saddle connections. In fact, if one studies the algebraic interaction strength, a convexity argument shows that one can further restrict to the curves that are simple (see \cite{these_massart}). Although this is not necessary for our argument, it will simplify the exposition to restrict as a first step to closed curves passing at most once through each singularity of the surface.

\begin{figure}
\center
\definecolor{ccqqqq}{rgb}{0.8,0,0}
\definecolor{qqqqff}{rgb}{0,0,1}
\definecolor{cxvqqq}{rgb}{0.7803921568627451,0.3137254901960784,0}
\definecolor{qqwuqq}{rgb}{0,0.39215686274509803,0}
\definecolor{uuuuuu}{rgb}{0.26666666666666666,0.26666666666666666,0.26666666666666666}
\begin{tikzpicture}[line cap=round,line join=round,>=triangle 45,x=1cm,y=1cm,scale = 1.3]
\clip(-1.9,-0.8) rectangle (8.527784548244595,4.8);
\fill[line width=1pt,color=qqwuqq,fill=qqwuqq,fill opacity=0.1] (-0.9009688679024195,0.4338837391175585) -- (1.9009688679024186,0.43388373911755784) -- (1,0) -- (0,0) -- cycle;
\fill[line width=1pt,color=qqwuqq,fill=qqwuqq,fill opacity=0.1] (-0.9009688679024184,3.9474025284172645) -- (0,4.381286267534822) -- (1,4.381286267534822) -- (1.900968867902419,3.947402528417264) -- cycle;
\fill[line width=1pt,color=cxvqqq,fill=cxvqqq,fill opacity=0.1] (-1.5244586697611524,3.165571045949235) -- (-0.9009688679024184,3.9474025284172645) -- (1.900968867902419,3.947402528417264) -- (2.5244586697611524,3.165571045949234) -- cycle;
\fill[line width=1pt,color=cxvqqq,fill=cxvqqq,fill opacity=0.1] (-1.5244586697611524,1.2157152215855882) -- (2.5244586697611524,1.2157152215855875) -- (1.9009688679024186,0.43388373911755784) -- (-0.9009688679024195,0.4338837391175585) -- cycle;
\fill[line width=1pt,color=qqqqff,fill=qqqqff,fill opacity=0.1] (-1.7469796037174667,2.1906431337674115) -- (-1.5244586697611524,3.165571045949235) -- (2.5244586697611524,3.165571045949234) -- (2.7469796037174667,2.1906431337674106) -- (2.5244586697611524,1.2157152215855875) -- (-1.5244586697611524,1.2157152215855882) -- cycle;
\fill[line width=1pt,fill=black,fill opacity=0.1] (3.588984210083918,1.9582800271035101) -- (4.489953077986337,2.392163766221068) -- (8.291890813791175,2.392163766221067) -- (7.390921945888756,1.9582800271035095) -- cycle;
\draw [line width=1pt] (0,0)-- (1,0);
\draw [line width=1pt] (1,4.381286267534822)-- (0,4.381286267534822);
\draw [line width=1pt,dash pattern=on 3pt off 3pt,color=qqwuqq] (-0.9009688679024195,0.4338837391175585)-- (1.9009688679024186,0.43388373911755784);
\draw [line width=1pt,color=qqwuqq] (1.9009688679024186,0.43388373911755784)-- (1,0);
\draw [line width=1pt,color=qqwuqq] (0,0)-- (-0.9009688679024195,0.4338837391175585);
\draw [line width=1pt,color=qqwuqq] (-0.9009688679024184,3.9474025284172645)-- (0,4.381286267534822);
\draw [line width=1pt,color=qqwuqq] (1,4.381286267534822)-- (1.900968867902419,3.947402528417264);
\draw [line width=1pt,dash pattern=on 3pt off 3pt,color=qqwuqq] (1.900968867902419,3.947402528417264)-- (-0.9009688679024184,3.9474025284172645);
\draw [line width=1pt,color=cxvqqq] (-1.5244586697611524,3.165571045949235)-- (-0.9009688679024184,3.9474025284172645);
\draw [line width=1pt,color=cxvqqq] (1.900968867902419,3.947402528417264)-- (2.5244586697611524,3.165571045949234);
\draw [line width=1pt,dash pattern=on 3pt off 3pt,color=cxvqqq] (2.5244586697611524,3.165571045949234)-- (-1.5244586697611524,3.165571045949235);
\draw [line width=1pt,dash pattern=on 3pt off 3pt,color=cxvqqq] (-1.5244586697611524,1.2157152215855882)-- (2.5244586697611524,1.2157152215855875);
\draw [line width=1pt,color=cxvqqq] (2.5244586697611524,1.2157152215855875)-- (1.9009688679024186,0.43388373911755784);
\draw [line width=1pt,color=cxvqqq] (-0.9009688679024195,0.4338837391175585)-- (-1.5244586697611524,1.2157152215855882);
\draw [line width=1pt,color=qqqqff] (-1.7469796037174667,2.1906431337674115)-- (-1.5244586697611524,3.165571045949235);
\draw [line width=1pt,color=qqqqff] (2.5244586697611524,3.165571045949234)-- (2.7469796037174667,2.1906431337674106);
\draw [line width=1pt,color=qqqqff] (2.7469796037174667,2.1906431337674106)-- (2.5244586697611524,1.2157152215855875);
\draw [line width=1pt,color=qqqqff] (-1.5244586697611524,1.2157152215855882)-- (-1.7469796037174667,2.1906431337674115);
\draw [line width=1pt,dash pattern=on 3pt off 3pt,color=qqqqff] (-1.7469796037174667,2.1906431337674115)-- (2.7469796037174667,2.1906431337674106);
\draw (0.21,0.43) node[anchor=north west] {$C_1$};
\draw (0.21,1.1) node[anchor=north west] {$C_2$};
\draw (0.21,1.95) node[anchor=north west] {$C_3$};
\draw (0.21,2.95) node[anchor=north west] {$C_3$};
\draw (0.21,3.85) node[anchor=north west] {$C_2$};
\draw (0.21,4.42) node[anchor=north west] {$C_1$};
\draw [line width=1pt,color=qqwuqq] (3.588984210083918,1.9582800271035101)-- (4.489953077986337,2.392163766221068);
\draw [line width=1pt,color=qqwuqq] (7.390921945888756,1.9582800271035095)-- (8.291890813791175,2.392163766221067);
\draw [line width=1pt,dash pattern=on 3pt off 3pt,color=qqwuqq] (5.489953077986337,2.392163766221068)-- (8.291890813791175,2.392163766221067);
\draw [line width=1pt,dash pattern=on 3pt off 3pt,color=qqwuqq] (6.390921945888756,1.9582800271035095)-- (3.588984210083918,1.9582800271035101);
\draw [line width=1pt] (4.489953077986337,2.392163766221068)-- (5.489953077986337,2.392163766221068);
\draw [line width=1pt] (6.390921945888756,1.9582800271035095)-- (7.390921945888756,1.9582800271035095);
\draw [line width=1pt,dash pattern=on 3pt off 3pt,color=qqwuqq] (5.489953077986337,2.392163766221068)-- (6.390921945888756,1.9582800271035095);
\draw (0.3,4.85) node[anchor=north west] {$1$};
\draw (-0.8,4.6) node[anchor=north west] {$2$};
\draw (1.45,4.6) node[anchor=north west] {$3$};
\draw (0.3,0) node[anchor=north west] {$1$};
\draw (1.45,0.3) node[anchor=north west] {$2$};
\draw (-0.8,0.3) node[anchor=north west] {$3$};
\draw (4.75,2.8) node[anchor=north west] {$1$};
\draw (3.75,2.6) node[anchor=north west] {$2$};
\draw (5.7,2.2) node[anchor=north west] {$3$};
\draw (6.75,2) node[anchor=north west] {$1$};
\draw (8,2.3) node[anchor=north west] {$2$};
\draw [line width=1pt,color=ccqqqq] (4.146457218316958,2.226744878538822)-- (7.941947584372851,2.223639988909662);
\draw [color=ccqqqq](6.75,2.25) node[anchor=north west] {$\alpha$};
\begin{scriptsize}
\draw [color=uuuuuu] (0,0)-- ++(-2pt,-2pt) -- ++(4pt,4pt) ++(-4pt,0) -- ++(4pt,-4pt);
\draw [fill=black] (1,0) circle (2pt);
\draw [color=uuuuuu] (1.9009688679024186,0.43388373911755784)-- ++(-2pt,-2pt) -- ++(4pt,4pt) ++(-4pt,0) -- ++(4pt,-4pt);
\draw [fill=uuuuuu] (2.5244586697611524,1.2157152215855875) circle (2pt);
\draw [color=uuuuuu] (2.7469796037174667,2.1906431337674106)-- ++(-2pt,-2pt) -- ++(4pt,4pt) ++(-4pt,0) -- ++(4pt,-4pt);
\draw [fill=uuuuuu] (2.5244586697611524,3.165571045949234) circle (2pt);
\draw [color=uuuuuu] (1.900968867902419,3.947402528417264)-- ++(-2pt,-2pt) -- ++(4pt,4pt) ++(-4pt,0) -- ++(4pt,-4pt);
\draw [fill=uuuuuu] (1,4.381286267534822) circle (2pt);
\draw [color=uuuuuu] (0,4.381286267534822)-- ++(-2pt,-2pt) -- ++(4pt,4pt) ++(-4pt,0) -- ++(4pt,-4pt);
\draw [fill=uuuuuu] (-0.9009688679024184,3.9474025284172645) circle (2pt);
\draw [color=uuuuuu] (-1.5244586697611524,3.165571045949235)-- ++(-2pt,-2pt) -- ++(4pt,4pt) ++(-4pt,0) -- ++(4pt,-4pt);
\draw [fill=uuuuuu] (-1.7469796037174667,2.1906431337674115) circle (2pt);
\draw [color=uuuuuu] (-1.5244586697611524,1.2157152215855882)-- ++(-2pt,-2pt) -- ++(4pt,4pt) ++(-4pt,0) -- ++(4pt,-4pt);
\draw [fill=uuuuuu] (-0.9009688679024195,0.4338837391175585) circle (2pt);
\draw [fill=black] (3.588984210083918,1.9582800271035101) circle (2pt);
\draw [color=uuuuuu] (4.489953077986337,2.392163766221068)-- ++(-2pt,-2pt) -- ++(4pt,4pt) ++(-4pt,0) -- ++(4pt,-4pt);
\draw [fill=uuuuuu] (5.489953077986337,2.392163766221068) circle (2pt);
\draw [color=uuuuuu] (6.390921945888756,1.9582800271035095)-- ++(-2pt,-2pt) -- ++(4pt,4pt) ++(-4pt,0) -- ++(4pt,-4pt);
\draw [fill=uuuuuu] (7.390921945888756,1.9582800271035095) circle (2pt);
\draw [color=uuuuuu] (8.291890813791175,2.392163766221067)-- ++(-2pt,-2pt) -- ++(4pt,4pt) ++(-4pt,0) -- ++(4pt,-4pt);
\draw [fill=black] (7.39092194588874,1.9582800271035055) circle (2pt);
\draw [fill=ccqqqq,shift={(4.146457218316958,2.226744878538822)}] (0,0) ++(0 pt,3.75pt) -- ++(3.2475952641916446pt,-5.625pt)--++(-6.495190528383289pt,0 pt) -- ++(3.2475952641916446pt,5.625pt);
\draw [fill=ccqqqq,shift={(7.941947584372851,2.223639988909662)}] (0,0) ++(0 pt,3.75pt) -- ++(3.2475952641916446pt,-5.625pt)--++(-6.495190528383289pt,0 pt) -- ++(3.2475952641916446pt,5.625pt);
\end{scriptsize}
\end{tikzpicture}
\caption{The decomposition into three horizontal cylinders on the $14$-gon. On the right, the unfolding of the smallest cylinder $C_1$. The closed geodesic $\alpha$ is homologous to the union of two saddle connections.}
\label{fig:cylinders}
\end{figure}
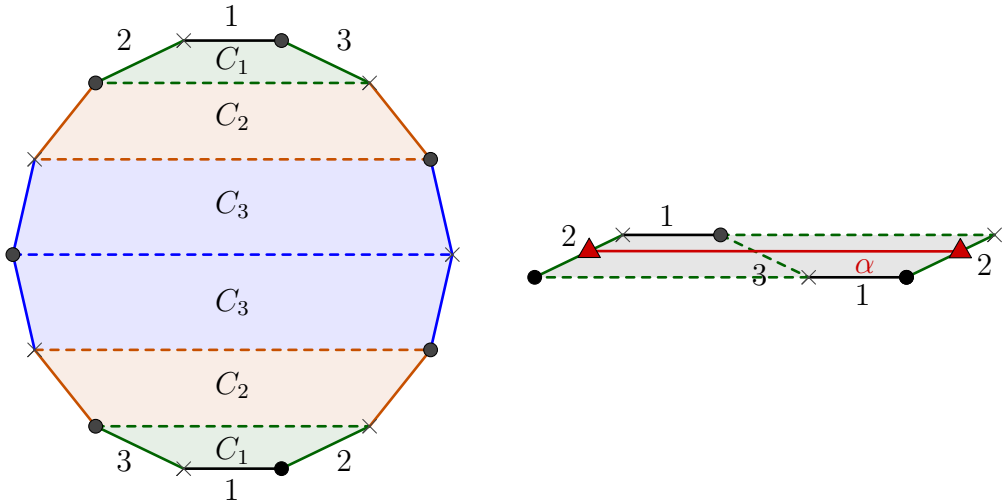

\subsection{Algebraic intersection in translation surfaces}
\begin{figure}
\center
\definecolor{ccqqqq}{rgb}{0.8,0,0}
\definecolor{qqwuqq}{rgb}{0,0.39215686274509803,0}
\begin{tikzpicture}[line cap=round,line join=round,>=triangle 45,x=1cm,y=1cm]
\clip(-0.2,0) rectangle (6.2,3.2);
\draw [line width=1pt,color=qqwuqq] (0,1)-- (1,2);
\draw [line width=1pt,color=qqwuqq,-to] (1,2)-- (2,3);
\draw [line width=1pt,color=ccqqqq] (2,1)-- (1,2);
\draw [line width=1pt,color=ccqqqq,-to] (1,2)-- (0,3);
\draw [line width=1pt,color=qqwuqq,-to] (5,2)-- (6,3);
\draw [line width=1pt,color=ccqqqq,to-] (6,1)-- (5,2);
\draw [line width=1pt,color=ccqqqq] (5,2)-- (4,3);
\draw [line width=1pt,color=qqwuqq] (4,1)-- (5,2);
\draw (1.7035162747177381,2.756789768387328) node[anchor=north west] {$\alpha$};
\draw (1.2,2.2) node[anchor=north west] {$P$};
\draw (5.2,2.2) node[anchor=north west] {$P$};
\draw (0.25,3.2) node[anchor=north west] {$\beta$};
\draw (5.69958264519532,2.78493108085548) node[anchor=north west] {$\alpha$};
\draw (5.720688629546434,1.6803845664805104) node[anchor=north west] {$\beta$};
\draw (-0.3,0.93463978607448) node[anchor=north west] {$\varepsilon_P(\alpha,\beta) = +1$};
\draw (3.55,0.906498473606328) node[anchor=north west] {$\varepsilon_P(\alpha,\beta) = -1$};
\begin{scriptsize}
\draw [fill=black] (1,2) circle (2.5pt);
\draw [fill=black] (5,2) circle (2.5pt);
\end{scriptsize}
\end{tikzpicture}
\caption{The sign of a transverse intersection.}
\label{fig:algebraic_intersection}
\end{figure}
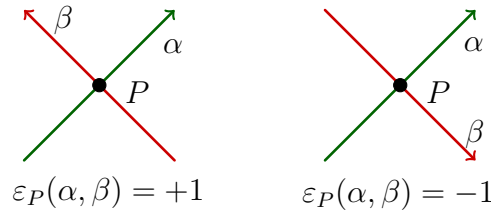
Given two oriented closed curves $\alpha, \beta$ on a smooth surface, which we assume to be in transverse position, the algebraic intersection $\Int(\alpha,\beta)$ is defined as the sum of signs at each intersection points, where the sign at an intersection point $P$ is set to $+1$ if turning counter-clockwise around the point $P$, one sees $\alpha$ going away from $P$, then $\beta$ going away from $P$, then $\alpha$ again, going towards $P$, and then $\beta$ going towards $P$. See Figure \ref{fig:algebraic_intersection}. In the specific case of translation surfaces, given a plane template (at the regular $n$-gon for even $n$), the sign of intersections is easily computed outside the singularities, but in order to compute the sign at singularities one as to look at identifications, and construct a neighborhood of the singularity which makes clear the cyclic order of the curves. In the case of the regular $n$-gon, if we label the sides of the regular $n$-gon $1,2, \dots, \frac{n}{2}$ in cyclic clockwise order, then one can check that the cyclic counter-clockwise order of the sides we see around each singularity is the natural (cyclic) order on $\{1,2, \dots, \frac{n}{2}\}$, see Figure \ref{fig:cyclic_order} for the decagon. In particular, if we choose $\gamma$ (resp. $\delta$) to be the union of the two sides of label $s_1$ and $s_2$ (resp. $t_1$ and $t_2$) with $1 \leq s_1 < t_1 < s_2 < t_2 \leq \frac{n}{2}$ (orienting consistently the sides so that $\gamma$ and $\delta$ have a well defined orientation) then $\gamma$ and $\delta$ intersect at both singularities with the same sign, and $|\Int(\gamma,\delta)| = 2$. From Theorem \ref{theo:decagon}, these are pairs of curves $\gamma$ and $\delta$ achieving KVol.

\begin{Rema}
For these pairs of closed curves, the geometric intersection is also two. More generally, the geometric intersection and the algebraic intersection coincide when all the intersection points have the same sign. As already said in the introduction, our proof of Theorem~\ref{theo:decagon} will rely on a count on intersection points for saddle connections, assuming that, in the worst cases, all intersections have the same signs\footnote{For saddle connections, all non-singular intersections have the same sign and this sign can only change at singularities.}. Hence, Theorem~\ref{theo:decagon} also holds when the algebraic intersection is replaced with the geometric intersection, with the same equality cases.
\end{Rema}

\begin{figure}
\center
\definecolor{qqwuqq}{rgb}{0,0.39215686274509803,0}
\definecolor{ccqqqq}{rgb}{0.8,0,0}
\begin{tikzpicture}[line cap=round,line join=round,>=triangle 45,x=1cm,y=1cm,scale=1.4]
\clip(-1.392653000757567,-0.5) rectangle (7.0574632628342675,3.5);
\draw [line width=1pt,color=qqwuqq] (0,0)-- (1,0);
\draw [line width=1pt,color=ccqqqq] (1,0)-- (1.809016994374947,0.5877852522924729);
\draw [line width=1pt] (1.809016994374947,0.5877852522924729)-- (2.118033988749895,1.5388417685876261);
\draw [line width=1pt,color=qqwuqq] (2.118033988749895,1.5388417685876261)-- (1.8090169943749475,2.4898982848827793);
\draw [line width=1pt,color=ccqqqq] (1.8090169943749475,2.4898982848827793)-- (1,3.0776835371752527);
\draw [line width=1pt,color=qqwuqq] (1,3.0776835371752527)-- (0,3.077683537175253);
\draw [line width=1pt,color=ccqqqq] (0,3.077683537175253)-- (-0.809016994374947,2.4898982848827798);
\draw [line width=1pt] (-0.809016994374947,2.4898982848827798)-- (-1.1180339887498945,1.5388417685876268);
\draw [line width=1pt,color=qqwuqq] (-1.1180339887498945,1.5388417685876268)-- (-0.8090169943749475,0.5877852522924734);
\draw [line width=1pt,color=ccqqqq] (-0.8090169943749475,0.5877852522924734)-- (0,0);
\draw (0.3,0) node[anchor=north west] {$1$};
\draw (-0.75,0.4) node[anchor=north west] {$2$};
\draw (1.4,0.4) node[anchor=north west] {$5$};
\draw (-1.4,1.2) node[anchor=north west] {$3$};
\draw (1.95,1.2) node[anchor=north west] {$4$};
\draw (-1.4,2.3) node[anchor=north west] {$4$};
\draw (1.95,2.3) node[anchor=north west] {$3$};
\draw (-0.75,3.1) node[anchor=north west] {$5$};
\draw (1.4,3.1) node[anchor=north west] {$2$};
\draw (0.3,3.5) node[anchor=north west] {$1$};
\draw (4.15,1.7) node[anchor=north west] {$1$};
\draw (3.55,0.85) node[anchor=north west] {$5$};
\draw (2.55,1.2) node[anchor=north west] {$4$};
\draw (2.55,2.2) node[anchor=north west] {$3$};
\draw (3.55,2.55) node[anchor=north west] {$2$};
\draw [line width=1pt,color=qqwuqq,-to] (3.5,1.5)-- (4.194486122028071,1.4977033742653243);
\draw [line width=1pt,color=ccqqqq,-to] (3.5,1.5)-- (3.720461381342655,2.158568923891677);
\draw [line width=1pt,color=qqwuqq] (3.5,1.5)-- (2.945459193349563,1.911963291209176);
\draw [line width=1pt] (3.5,1.5)-- (2.9405062404997118,1.098687078767865);
\draw [line width=1pt,color=ccqqqq] (3.5,1.5)-- (3.7124473352869205,0.8426603699198562);
\draw (5,1.7) node[anchor=north west] {$1$};
\draw (5.5,2.55) node[anchor=north west] {$5$};
\draw (6.5,2.2) node[anchor=north west] {$4$};
\draw (6.5,1.2) node[anchor=north west] {$3$};
\draw (5.5,0.85) node[anchor=north west] {$2$};
\draw [line width=1pt,color=qqwuqq] (6,1.5)-- (5.305513877971929,1.4977033742653243);
\draw [line width=1pt,color=ccqqqq,-to] (6,1.5)-- (5.7795386186573445,2.158568923891677);
\draw [line width=1pt] (6,1.5)-- (6.5545408066504365,1.9119632912091762);
\draw [line width=1pt,color=qqwuqq,-to] (6,1.5)-- (6.559493759500288,1.0986870787678653);
\draw [line width=1pt,color=ccqqqq] (6,1.5)-- (5.787552664713079,0.8426603699198563);
\begin{scriptsize}
\draw [fill=black] (0,0) circle (3pt);
\draw [color=black] (1,0)-- ++(-3pt,-3pt) -- ++(6pt,6pt) ++(-6pt,0) -- ++(6pt,-6pt);
\draw [fill=black] (1.809016994374947,0.5877852522924729) circle (3pt);
\draw [color=black] (2.118033988749895,1.5388417685876261)-- ++(-3pt,-3pt) -- ++(6pt,6pt) ++(-6pt,0) -- ++(6pt,-6pt);
\draw [fill=black] (1.8090169943749475,2.4898982848827793) circle (3pt);
\draw [color=black] (1,3.0776835371752527)-- ++(-3pt,-3pt) -- ++(6pt,6pt) ++(-6pt,0) -- ++(6pt,-6pt);
\draw [fill=black] (0,3.077683537175253) circle (3pt);
\draw [color=black] (-0.809016994374947,2.4898982848827798)-- ++(-3pt,-3pt) -- ++(6pt,6pt) ++(-6pt,0) -- ++(6pt,-6pt);
\draw [fill=black] (-1.1180339887498945,1.5388417685876268) circle (3pt);
\draw [color=black] (-0.8090169943749475,0.5877852522924734)-- ++(-3pt,-3pt) -- ++(6pt,6pt) ++(-6pt,0) -- ++(6pt,-6pt);
\draw [fill=black] (3.5,1.5) circle (3pt);
\draw [color=black] (6,1.5)-- ++(-3pt,-3pt) -- ++(6pt,6pt) ++(-6pt,0) -- ++(6pt,-6pt);
\end{scriptsize}
\end{tikzpicture}
\caption{The cyclic order of the sides at the singularties. The segments $\gamma = 1 \cup 3$ and $\delta = 2 \cup 5$ intersect twice.}
\label{fig:cyclic_order}
\end{figure}
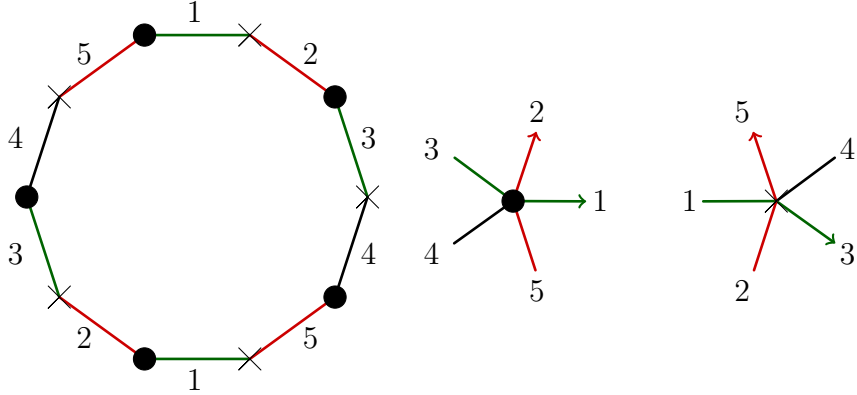

\subsection{Sectors, transition diagrams, and subdivisions}
We now explain how to subdivide saddle connections into smaller \emph{sandwiched} and \emph{non-sandwiched} segments in order to be able to control both their length and the number of intersections, as done in \cite{BLM22, Bou23}. The main goal of this paper is to go one step further in the study of this decomposition in order to improve the length estimates and therefore deal with the case where there are two singularities.\newline

Let $\alpha$ be a saddle connection which we assume is neither a side nor a diagonal of the $n$-gon. We let $\theta_{\alpha} \in (0, \pi)$ be the angle between the direction of $\alpha$ and the horizontal (which is set as the direction of a side), and we partition $(0,\pi)$ into $n$ sub-intervals $\Sigma_{i} = (\frac{i \pi}{n}, \frac{(i+1)\pi}{n})$ for $i \in \{0, \cdots, n-1\}$.

\begin{Def}
We will say that $\alpha$ has \emph{sector} $\Sigma_i$ if $\theta_\alpha \in \Sigma_i$.
\end{Def}

As used in \cite{SU, Bou23}, the sector determines a possible \emph{transition diagram} of the cutting sequence of sides crossed by $\alpha$. More precisely, the transition diagram associated to the sector $\Sigma_i$ is the graph whose vertices corresponds to the sides of the regular $n$-gon (up to identification) and where there is an edge from the sides of label $A$ to the sides of label $B$ if there is a segment going from a side of label $A$ to a side of label $B$ whose direction belong to $\Sigma_i$. In particular, if we label the sides of the regular $n$-gon by numbers $1, 2, \dots, n/2$ (there are only $n/2$ sides up to identification), then one can check that the transition diagram corresponding to the sector $\Sigma_i$ has the form
\[ \sigma_i(1) \leftrightharpoons \sigma_i(2) \leftrightharpoons \sigma_i(3) \cdots \leftrightharpoons \sigma_i(n/2) \circlearrowright \]
where $\sigma_i \in \mathfrak{S}_{n/2}$ is a permutation (see Figure~\ref{fig:sectors_decagon}).

\begin{Nota}
We will write $\sigma_{\alpha}$ to refer to the permutation $\sigma_i$ associated to the sector $\Sigma_i$ corresponding to $\alpha$.
\end{Nota}

\begin{figure}
\center
\definecolor{qqwuqq}{rgb}{0,0.39215686274509803,0}
\definecolor{uuuuuu}{rgb}{0.26666666666666666,0.26666666666666666,0.26666666666666666}
\begin{tikzpicture}[line cap=round,line join=round,>=triangle 45,x=1cm,y=1cm, scale = 1.7]
\clip(-1.8,-0.35) rectangle (2.8,3.35);
\fill[line width=1pt,fill=black,fill opacity=0.05] (0,0) -- (1,0) -- (1.809016994374947,0.5877852522924729) -- (2.118033988749895,1.5388417685876261) -- (1.8090169943749475,2.4898982848827793) -- (1,3.0776835371752527) -- (0,3.077683537175253) -- (-0.809016994374947,2.4898982848827798) -- (-1.1180339887498945,1.5388417685876268) -- (-0.8090169943749475,0.5877852522924734) -- cycle;
\draw [shift={(0,0)},line width=1pt,color=qqwuqq,fill=qqwuqq,fill opacity=0.10000000149011612] (0,0) -- (0:0.5664025228310235) arc (0:18:0.5664025228310235) -- cycle;
\draw [line width=1pt] (0,0)-- (1,0);
\draw [line width=1pt] (1,0)-- (1.809016994374947,0.5877852522924729);
\draw [line width=1pt] (1.809016994374947,0.5877852522924729)-- (2.118033988749895,1.5388417685876261);
\draw [line width=1pt] (2.118033988749895,1.5388417685876261)-- (1.8090169943749475,2.4898982848827793);
\draw [line width=1pt] (1.8090169943749475,2.4898982848827793)-- (1,3.0776835371752527);
\draw [line width=1pt] (1,3.0776835371752527)-- (0,3.077683537175253);
\draw [line width=1pt] (0,3.077683537175253)-- (-0.809016994374947,2.4898982848827798);
\draw [line width=1pt] (-0.809016994374947,2.4898982848827798)-- (-1.1180339887498945,1.5388417685876268);
\draw [line width=1pt] (-1.1180339887498945,1.5388417685876268)-- (-0.8090169943749475,0.5877852522924734);
\draw [line width=1pt] (-0.8090169943749475,0.5877852522924734)-- (0,0);
\draw [line width=1pt,dash pattern=on 3pt off 3pt] (0,0)-- (1.809016994374947,0.5877852522924729);
\draw [line width=1pt,dash pattern=on 3pt off 3pt] (1.809016994374947,0.5877852522924729)-- (-0.8090169943749475,0.5877852522924734);
\draw [line width=1pt,dash pattern=on 3pt off 3pt] (-0.8090169943749475,0.5877852522924734)-- (2.118033988749895,1.5388417685876261);
\draw [line width=1pt,dash pattern=on 3pt off 3pt] (2.118033988749895,1.5388417685876261)-- (-1.1180339887498945,1.5388417685876268);
\draw [line width=1pt,dash pattern=on 3pt off 3pt] (-1.1180339887498945,1.5388417685876268)-- (1.8090169943749475,2.4898982848827793);
\draw [line width=1pt,dash pattern=on 3pt off 3pt] (1.8090169943749475,2.4898982848827793)-- (-0.809016994374947,2.4898982848827798);
\draw [line width=1pt,dash pattern=on 3pt off 3pt] (-0.809016994374947,2.4898982848827798)-- (1,3.0776835371752527);
\draw (0.2,0) node[anchor=north west] {$\sigma_0(1)$};
\draw (1.4,0.45) node[anchor=north west] {$\sigma_0(2)$};
\draw (-1,0.45) node[anchor=north west] {$\sigma_0(3)$};
\draw (1.9,1.2) node[anchor=north west] {$\sigma_0(4)$};
\draw (-1.6,1.2) node[anchor=north west] {$\sigma_0(5)$};
\draw (1.9,2.25) node[anchor=north west] {$\sigma_0(5)$};
\draw (-1.6,2.25) node[anchor=north west] {$\sigma_0(4)$};
\draw (1.4,3.15) node[anchor=north west] {$\sigma_0(3)$};
\draw (-1,3.15) node[anchor=north west] {$\sigma_0(2)$};
\draw (0.2,3.4) node[anchor=north west] {$\sigma_0(1)$};
\draw (0.55,0.3) node[anchor=north west] {$\Sigma_0$};
\begin{scriptsize}
\draw [fill=uuuuuu] (0,0) circle (2pt);
\draw [color=black] (1,0)-- ++(-2pt,-2pt) -- ++(4pt,4pt) ++(-4pt,0) -- ++(4pt,-4pt);
\draw [fill=uuuuuu] (1.809016994374947,0.5877852522924729) circle (2pt);
\draw [color=uuuuuu] (2.118033988749895,1.5388417685876261)-- ++(-2pt,-2pt) -- ++(4pt,4pt) ++(-4pt,0) -- ++(4pt,-4pt);
\draw [fill=uuuuuu] (1.8090169943749475,2.4898982848827793) circle (2pt);
\draw [color=uuuuuu] (1,3.0776835371752527)-- ++(-2pt,-2pt) -- ++(4pt,4pt) ++(-4pt,0) -- ++(4pt,-4pt);
\draw [fill=uuuuuu] (0,3.077683537175253) circle (2pt);
\draw [color=uuuuuu] (-0.809016994374947,2.4898982848827798)-- ++(-2pt,-2pt) -- ++(4pt,4pt) ++(-4pt,0) -- ++(4pt,-4pt);
\draw [fill=uuuuuu] (-1.1180339887498945,1.5388417685876268) circle (2pt);
\draw [color=uuuuuu] (-0.8090169943749475,0.5877852522924734)-- ++(-2pt,-2pt) -- ++(4pt,4pt) ++(-4pt,0) -- ++(4pt,-4pt);
\end{scriptsize}
\end{tikzpicture}
\caption{The permutation $\sigma_0$ associated to the transitition diagram in sector $\Sigma_0$ for $n=10$.}
\label{fig:sectors_decagon}
\end{figure}
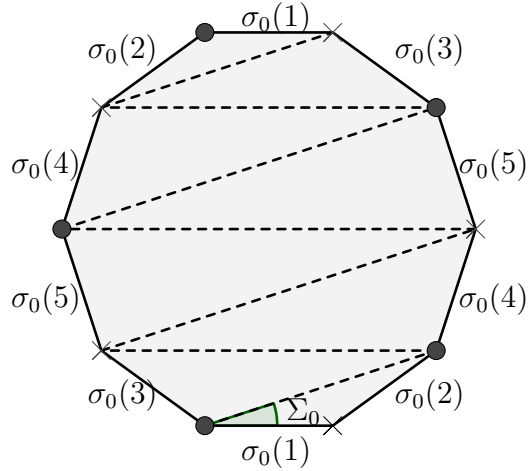

Now, let us subdivide $\alpha$ into shorter segments by cutting it every time it crosses a side of the regular $n$-gon. In order to obtain length estimates, one can consider two types of segments: the \emph{adjacent segments} which go from the interior of a side of the regular $n$-gon to the interior of an adjacent side of the $n$-gon, and the other segments, referred to as \emph{non-adjacent segments}. One of the main ideas of \cite{BLM22}, and then \cite{Bou23}, is to notice that non-adjacent segments have length at least $l_0$, the side-length of the regular $n$-gon, and that, although we cannot control the length of adjacent segments separately, the total length of two consecutive adjacent segments is always greater than $l_0$. More, adjacent segments correspond to segments for which one of the endpoints lies on a side of label $\sigma_\alpha(1)$, and as a consequence they always come in pairs: grouping adjacent segments in pairs, we obtain a so called \emph{sandwiched segment}, whereas the other segments (i.e. the non-adjacent segments) are refereed to as \emph{non-sandwiched segments}.

\begin{Rema}
This idea was later generalized in \cite{BP24} under general hypotheses on polygons and identifications. In this general context adjacent segments do not always come in pairs and one has to be very careful in order to obtain intersection estimates. For example, this behavior appears for Bouw-M\"oller surfaces considered in Section \ref{sec:Bouw-Moller}.
\end{Rema}

Another way to define the decomposition is to first remark that the side $\sigma_i(1)$ can only be preceded and followed by a single side, $\sigma_i(2)$, adjacent to $\sigma_i(1)$. For this reason we refer to $\sigma_i(1)$ as the \emph{sandwiched} side in the sector $\Sigma_i$, whereas the other sides will be referred to as \emph{non-sandwiched} in the sector $\Sigma_i$. Now, given a saddle connection $\alpha$ which is neither a side nor a diagonal, we subdivide $\alpha$ into smaller (non-closed) segments by cutting $\alpha$ every time it crosses a non-sandwiched side (in the sector corresponding to $\alpha$). This determines a decomposition $\alpha = \alpha_1 \cup \cdots \cup \alpha_k$, where $k \geq 2$ and each segment is either:
\begin{itemize}
\item A \emph{non-sandwiched segment} which goes from a side of the $n$-gon to another non-adjacent side of the $n$-gon
\item A \emph{sandwiched segment} which goes from the interior of a side of label $\sigma_i(2)$ to the interior of the other side of label $\sigma_i(2)$ and intersects a side of label $\sigma_i(1)$.
\item An initial or terminal segment $\alpha_1$ or $\alpha_k$. These segments will be considered as non-sandwiched segments.
\end{itemize}

By convention, if $\alpha$ is a side or a diagonal, we will say that the decomposition of $\alpha$ is $\alpha = \alpha_1$, made of a single segment.

\begin{Nota}\label{nota:nalpha1}
We will denote by $n_{\alpha}$ the number of segments in the above decomposition of $\alpha$.
\end{Nota}


\subsection{Length and intersection estimates}
As already hinted, the main reason for considering this specific decomposition of a saddle connection is twofold: first, the construction ensures that every segment of the decomposition has length at least $l_0=1$, the side length of the regular $n-$gon (which we will assume to be 1). This means in particular that the total length of a saddle connection $\alpha$ is at least $n_\alpha$. Second, it is easily seen that two non-sandwiched segments can only intersect once, and in fact one can control the number of intersections in terms of the total number of segments, as done in \cite[Section 3.3]{Bou23}. For this, one should distinguish the following class of saddle connections:

\begin{Def}\label{def:big_cyl}
The saddle connections which are \emph{strictly contained inside a big cylinder} are the longest diagonals and the saddle connections $\alpha$ for which the only sides appearing in the cutting sequence of $\alpha$ are $\sigma_\alpha(n/2-1)$ and $\sigma_\alpha(n/2)$, and the endpoints of $\alpha$ are the common endpoints of sides with label $\sigma_\alpha(n/2-1)$ and $\sigma_\alpha(n/2)$.
\end{Def}

See an example on the right of Figure \ref{fig:ex_type_2}. With this definition, we have the following intersection estimates:

\begin{Prop}{\cite[Section 3.3]{Bou23}}\label{prop:intersection_BOU23}
Given two saddle connections $\alpha$ and $\beta$ on the regular $n$-gon ($n \geq 8$ even), the number $|\alpha \cap \beta|$ of non-singular intersections between $\alpha$ and $\beta$ satisfies
\[ |\alpha \cap \beta| \leq n_{\alpha} n_{\beta} \]
unless one of $\alpha$ or $\beta$ (say $\alpha$) is strictly contained inside a big cylinder, and in this case we have:
\[ |\alpha \cap \beta| \leq n_{\alpha}(n_{\beta} + q_{\beta}) \]
where $q_{\beta}$ is the number of sandwiched segments of $\beta$.
\end{Prop}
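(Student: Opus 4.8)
The plan is to reduce the count of non-singular (i.e.\ interior, transverse) intersections to a count over pairs of segments coming from the sandwiched/non-sandwiched decompositions $\alpha = \alpha_1 \cup \cdots \cup \alpha_{n_\alpha}$ and $\beta = \beta_1 \cup \cdots \cup \beta_{n_\beta}$. Since every non-singular intersection of $\alpha$ and $\beta$ lies in the interior of the polygon, away from the endpoints of the segments (which sit on sides), each such point belongs to exactly one pair, so that $|\alpha \cap \beta| = \sum_{i,j} |\alpha_i \cap \beta_j|$ and the whole problem reduces to bounding $|\alpha_i \cap \beta_j|$ term by term. The starting point is the per-pair estimate read off from \cite[Section 3.3]{Bou23}: two segments of the decomposition cross at most once, the only exception being a pair consisting of a longest non-adjacent segment (endpoints on the two sides farthest from the sandwiched side, namely $\sigma(n/2-1)$ and $\sigma(n/2)$) and a sandwiched segment, which may cross \emph{twice}. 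This last fact is exactly the content of \cite[Lemma 3.2]{Bou23}, the two crossings corresponding to the two adjacent halves that make up the sandwiched segment being met in turn by the long segment as it traverses the big cylinder.

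I would first dispose of the generic estimate. If no pair $(\alpha_i, \beta_j)$ is of the exceptional type, then every term $|\alpha_i \cap \beta_j|$ is at most $1$ and summing over the $n_\alpha n_\beta$ pairs yields $|\alpha \cap \beta| \le n_\alpha n_\beta$, which is the first bound. The work is therefore concentrated on the case where longest non-adjacent segments are present. The key geometric observation, which I would extract from the transition-diagram description of a sector, is that a longest non-adjacent segment meets a sandwiched segment twice only when it traverses the big cylinder from one $\sigma(n/2-1)$--$\sigma(n/2)$ corner to the other, and that for this excess to be realised simultaneously against \emph{every} sandwiched segment of $\beta$, the saddle connection $\alpha$ must itself consist only of such segments with endpoints at the prescribed vertices, i.e.\ $\alpha$ must be strictly contained inside the big cylinder associated to its sector (the longest diagonal being the degenerate single-segment instance, for which $n_\alpha = 1$). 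In that case the count is immediate: each of the $n_\alpha$ segments of $\alpha$ crosses each of the $q_\beta$ sandwiched segments of $\beta$ at most twice and each of the remaining $n_\beta - q_\beta$ non-sandwiched segments at most once, so that $|\alpha \cap \beta| \le n_\alpha \bigl(2 q_\beta + (n_\beta - q_\beta)\bigr) = n_\alpha(n_\beta + q_\beta)$, as claimed.

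The main obstacle is precisely the dichotomy underlying the previous paragraph: I must show that when $\alpha$ is \emph{not} strictly contained inside a big cylinder, the possible presence of an isolated longest non-adjacent segment does not push the total beyond $n_\alpha n_\beta$. The delicate point is that such a segment may still double-cross a sandwiched segment of $\beta$, and one has to verify that this local excess is compensated by a deficit elsewhere, typically a neighbouring segment of $\alpha$ that fails to meet a segment of $\beta$ against which it is nominally counted. Establishing this compensation is exactly the bookkeeping carried out in \cite[Section 3.3]{Bou23} through the analysis of cutting sequences and transition diagrams, and I would invoke that analysis rather than reproduce it; the remaining degenerate configurations, where $\alpha$ or $\beta$ is a side or a diagonal and the decomposition is a single segment, are then checked directly.
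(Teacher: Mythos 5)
Your proposal is correct and follows essentially the same route as the paper: the bound $n_{\alpha}(n_{\beta}+q_{\beta})$ is obtained by the same direct count (each non-sandwiched segment of $\alpha$ meets a non-sandwiched segment of $\beta$ at most once and a sandwiched segment at most twice), and the harder bound $n_{\alpha}n_{\beta}$ in the non-exceptional case is, in both treatments, delegated to the cutting-sequence/transition-diagram analysis of \cite[Section 3.3]{Bou23} rather than reproved. The paper's own justification is exactly this two-line reduction plus citation, so your additional discussion of how the double crossings force $\alpha$ into the big cylinder is a faithful (if informal) account of what that cited analysis establishes.
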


The second estimate of Proposition \ref{prop:intersection_BOU23} comes directly from the fact that each non-sandwiched segment of $\alpha$ can only intersect once a non-sandwiched segment, and twice a sandwiched segment. The first estimate requires more work. 

\subsection{General strategy.} The main idea of \cite{Bou23} in the proof of the Theorem \ref{theo:l_0} is to use the above bounds to obtain
\[ 
\frac{\Int(\alpha, \beta)}{l(\alpha)l(\beta)} \lessapprox  \frac{n_\alpha n_\beta}{\ n_\alpha  n_\beta} = 1,
\]
although in reality there are two issues to deal with:
\begin{itemize}
\item Since the intersection estimate $|\alpha \cap \beta| \leq n_{\alpha} n_{\beta}$ fails for saddle connections which are strictly contained inside a big cylinder, these saddle connections have to be considered separately, and their length estimates have to be improved;
\item the intersection estimate do not take into account the singular intersections. Here again, a small margin is required in the length estimates, for saddle connections which are not sides, to compensate for a possible singular intersection.
\end{itemize}

In the present paper, and both for the proofs of Theorems \ref{theo:decagon} and \ref{theo:BM}, the general idea is similar: we roughly want to show that in general the length of a saddle connection $\alpha$ is at least $l(\alpha) \geq \sqrt2 n_\alpha$, which, using the same intersection estimate\footnote{In the case of Bouw-M\"oller surfaces discussed in Section \ref{sec:Bouw-Moller}, a similar intersection estimate holds, see Proposition \ref{prop:intersections_BM}.}, would give:
\begin{equation*}
\frac{\Int(\alpha, \beta)}{l(\alpha)l(\beta)} \lessapprox \frac{n_\alpha n_\beta}{\sqrt{2} n_\alpha \sqrt{2} n_\beta} = \frac{1}{2}.
\end{equation*}
Here again, the above two issues have to be dealt with\footnote{For Bouw-M\"oller surfaces, there is no need to consider an analog of saddle connections strictly contained in a big cylinder, and only the second -- and the third -- issue has to be dealt with}, but there is actually a third issue coming from the fact that saddle connections may not be closed: here $\alpha$ and $\beta$ have to be considered as unions of possibly several saddle connections. Although we will work separately with saddle connections in a first step, a large part of the argument will be devoted to analyze what happens for specific unions of saddle connections.

\section{Proof of Theorem \ref{theo:decagon}}\label{sec:decagon}

\subsection{Outline.}
In order to overcome the three above issues we will distinguish four types of saddle connections and provide specific length and intersection estimates in each of the cases. These are the following:

\begin{enumerate}
\item sides,
\item the saddle connection $\Delta$ in the left of Figure \ref{fig:ex_type_2}, starting from the bottom left vertex of the regular $n$-gon with direction $(2+\cos(2\pi/n),\sin(2\pi/n))$, and its symmetric images by the action of the dihedral group.
\item The saddle connections which are strictly contained inside the big cylinder associated to their sector.
\item all other saddle connections.
\end{enumerate}

\begin{Rema}
Saddle connections of type (1), (2) and (3) are always non-closed, whereas saddle connections of type (4) can either be closed or non-closed.
\end{Rema}

\begin{figure}
\center
\definecolor{ccqqqq}{rgb}{0.8,0,0}
\definecolor{uuuuuu}{rgb}{0.26666666666666666,0.26666666666666666,0.26666666666666666}
\begin{tikzpicture}[line cap=round,line join=round,>=triangle 45,x=1cm,y=1cm,scale = 1.7]
\clip(-1.5,-0.35) rectangle (2.5,3.5);
\draw [line width=1pt] (0,0)-- (1,0);
\draw [line width=1pt] (1,0)-- (1.809016994374947,0.5877852522924729);
\draw [line width=1pt] (1.809016994374947,0.5877852522924729)-- (2.118033988749895,1.5388417685876261);
\draw [line width=1pt] (2.118033988749895,1.5388417685876261)-- (1.8090169943749475,2.4898982848827793);
\draw [line width=1pt] (1.8090169943749475,2.4898982848827793)-- (1,3.0776835371752527);
\draw [line width=1pt] (1,3.0776835371752527)-- (0,3.077683537175253);
\draw [line width=1pt] (0,3.077683537175253)-- (-0.809016994374947,2.4898982848827798);
\draw [line width=1pt] (-0.809016994374947,2.4898982848827798)-- (-1.1180339887498945,1.5388417685876268);
\draw [line width=1pt] (-1.1180339887498945,1.5388417685876268)-- (-0.8090169943749475,0.5877852522924734);
\draw [line width=1pt] (-0.8090169943749475,0.5877852522924734)-- (0,0);
\draw [line width=1pt,color=ccqqqq] (0,0)-- (1.4045084971874735,0.29389262614623646);
\draw [line width=1pt,color=ccqqqq] (-0.4045084971874733,2.7837909110290164)-- (1,3.0776835371752527);
\draw [color=ccqqqq](0.5993081196669593,0.5) node[anchor=north west] {$\Delta$};
\begin{scriptsize}
\draw [color=uuuuuu] (0,0)-- ++(-2.5pt,-2.5pt) -- ++(5pt,5pt) ++(-5pt,0) -- ++(5pt,-5pt);
\draw [fill=black] (1,0) circle (2pt);
\draw [color=uuuuuu] (1.809016994374947,0.5877852522924729)-- ++(-2.5pt,-2.5pt) -- ++(5pt,5pt) ++(-5pt,0) -- ++(5pt,-5pt);
\draw [fill=black] (2.118033988749895,1.5388417685876261) circle (2pt);
\draw [color=uuuuuu] (1.8090169943749475,2.4898982848827793)-- ++(-2.5pt,-2.5pt) -- ++(5pt,5pt) ++(-5pt,0) -- ++(5pt,-5pt);
\draw [fill=black] (1,3.0776835371752527) circle (2pt);
\draw [color=uuuuuu] (0,3.077683537175253)-- ++(-2.5pt,-2.5pt) -- ++(5pt,5pt) ++(-5pt,0) -- ++(5pt,-5pt);
\draw [fill=black] (-0.809016994374947,2.4898982848827798) circle (2pt);
\draw [color=uuuuuu] (-1.1180339887498945,1.5388417685876268)-- ++(-2.5pt,-2.5pt) -- ++(5pt,5pt) ++(-5pt,0) -- ++(5pt,-5pt);
\draw [fill=black] (-0.8090169943749475,0.5877852522924734) circle (2pt);
\draw [fill=ccqqqq] (1.4045084971874735,0.29389262614623646) circle (2pt);
\draw [fill=ccqqqq] (-0.4045084971874733,2.7837909110290164) circle (2pt);
\end{scriptsize}
\end{tikzpicture}
\definecolor{ccqqqq}{rgb}{0.8,0,0}
\definecolor{qqwuqq}{rgb}{0,0.39215686274509803,0}
\definecolor{zzttqq}{rgb}{0.6,0.2,0}
\definecolor{uuuuuu}{rgb}{0.26666666666666666,0.26666666666666666,0.26666666666666666}
\definecolor{cxvqqq}{rgb}{0.7803921568627451,0.3137254901960784,0}
\begin{tikzpicture}[line cap=round,line join=round,>=triangle 45,x=1cm,y=1cm,scale = 1.7]
\clip(-1.5,-0.35) rectangle (2.5,3.5);
\fill[line width=1pt,color=zzttqq,fill=zzttqq,fill opacity=0.10000000149011612] (0,3.077683537175253) -- (1,3.0776835371752527) -- (1.8090169943749475,2.4898982848827793) -- (1,0) -- (0,0) -- (-0.8090169943749475,0.5877852522924734) -- cycle;
\draw [line width=1pt] (0,0)-- (1,0);
\draw [line width=1pt] (1,0)-- (1.809016994374947,0.5877852522924729);
\draw [line width=1pt] (1.809016994374947,0.5877852522924729)-- (2.118033988749895,1.5388417685876261);
\draw [line width=1pt] (2.118033988749895,1.5388417685876261)-- (1.8090169943749475,2.4898982848827793);
\draw [line width=1pt] (1.8090169943749475,2.4898982848827793)-- (1,3.0776835371752527);
\draw [line width=1pt] (1,3.0776835371752527)-- (0,3.077683537175253);
\draw [line width=1pt] (0,3.077683537175253)-- (-0.809016994374947,2.4898982848827798);
\draw [line width=1pt] (-0.809016994374947,2.4898982848827798)-- (-1.1180339887498945,1.5388417685876268);
\draw [line width=1pt] (-1.1180339887498945,1.5388417685876268)-- (-0.8090169943749475,0.5877852522924734);
\draw [line width=1pt] (-0.8090169943749475,0.5877852522924734)-- (0,0);
\draw [line width=1pt,dash pattern=on 3pt off 3pt,color=zzttqq] (1.8090169943749475,2.4898982848827793)-- (1,0);
\draw [line width=1pt,dash pattern=on 3pt off 3pt,color=zzttqq] (-0.8090169943749475,0.5877852522924734)-- (0,3.077683537175253);
\draw [line width=1pt,color=qqwuqq] (0,0)-- (0.7376263079770541,3.077683537175253);
\draw [line width=1pt,color=qqwuqq] (0.26298784023064553,0)-- (1,3.0776835371752527);
\draw [line width=1pt,color=qqwuqq] (0.7376263079770539,0)-- (1.4047700305072512,2.7836008959497085);
\draw [line width=1pt,color=qqwuqq] (-0.4042469638676962,0.2937026110669292)-- (0.2629878402306459,3.077683537175253);
\draw [color=ccqqqq](1.0524739036123054,3.3380649073760162) node[anchor=north west] {$A'$};
\draw [color=ccqqqq](-0.18866494745482557,-0.0690543816145054) node[anchor=north west] {$A$};
\begin{scriptsize}
\draw [color=cxvqqq] (0,0)-- ++(-3pt,-3pt) -- ++(6pt,6pt) ++(-6pt,0) -- ++(6pt,-6pt);
\draw [fill=black] (1,0) circle (2pt);
\draw [color=uuuuuu] (1.809016994374947,0.5877852522924729)-- ++(-2.5pt,-2.5pt) -- ++(5pt,5pt) ++(-5pt,0) -- ++(5pt,-5pt);
\draw [fill=black] (2.118033988749895,1.5388417685876261) circle (2pt);
\draw [color=uuuuuu] (1.8090169943749475,2.4898982848827793)-- ++(-2.5pt,-2.5pt) -- ++(5pt,5pt) ++(-5pt,0) -- ++(5pt,-5pt);
\draw [fill=cxvqqq] (1,3.0776835371752527) circle (2.5pt);
\draw [color=uuuuuu] (0,3.077683537175253)-- ++(-2.5pt,-2.5pt) -- ++(5pt,5pt) ++(-5pt,0) -- ++(5pt,-5pt);
\draw [fill=black] (-0.809016994374947,2.4898982848827798) circle (2pt);
\draw [color=uuuuuu] (-1.1180339887498945,1.5388417685876268)-- ++(-2.5pt,-2.5pt) -- ++(5pt,5pt) ++(-5pt,0) -- ++(5pt,-5pt);
\draw [fill=black] (-0.8090169943749475,0.5877852522924734) circle (2pt);
\draw [fill=qqwuqq] (0.7376263079770541,3.077683537175253) ++(-2.5pt,0 pt) -- ++(2.5pt,2.5pt)--++(2.5pt,-2.5pt)--++(-2.5pt,-2.5pt)--++(-2.5pt,2.5pt);
\draw [fill=qqwuqq] (0.7376263079770539,0) ++(-2pt,0 pt) -- ++(2pt,2pt)--++(2pt,-2pt)--++(-2pt,-2pt)--++(-2pt,2pt);
\draw [fill=qqwuqq] (1.4047700305072512,2.7836008959497085) ++(-2pt,0 pt) -- ++(2pt,2pt)--++(2pt,-2pt)--++(-2pt,-2pt)--++(-2pt,2pt);
\draw [fill=qqwuqq] (-0.4042469638676962,0.2937026110669292) ++(-2pt,0 pt) -- ++(2pt,2pt)--++(2pt,-2pt)--++(-2pt,-2pt)--++(-2pt,2pt);
\draw [fill=qqwuqq] (0.2629878402306459,3.077683537175253) ++(-2pt,0 pt) -- ++(2pt,2pt)--++(2pt,-2pt)--++(-2pt,-2pt)--++(-2pt,2pt);
\draw [fill=qqwuqq] (0.26298784023064553,0) ++(-2pt,0 pt) -- ++(2pt,2pt)--++(2pt,-2pt)--++(-2pt,-2pt)--++(-2pt,2pt);
\end{scriptsize}
\end{tikzpicture}
\caption{On the left, a saddle connection of type (2). Any saddle connection of type (2) can be obtained from $\Delta$ by a symmetry or a rotation. On the right, a saddle connection of type (3), that is staying strictly inside the big cylinder associated to its diagram. Its endpoints $A$ and $A'$ are opposite vertices of the decagon, hence representing different singularities.}
\label{fig:ex_type_2}
\end{figure}

With the above distinction into types, we show:
\begin{Prop}[Length estimates]\label{prop:study_lengths}
Let $\alpha$ be a saddle connection on the regular $n$-gon, where $n \geq 10$, is even. We assume that $l_0=1$, that is the $n$-gon has unit side. Then, according to the type of $\alpha$ as above, we have:
\begin{enumerate}
\item $l(\alpha) = 1$,
\item $l(\alpha) = \sqrt{5 + 4 \cos \left(\frac{2\pi}{n} \right)} > 2 \sqrt{2}$. 
\item $l(\alpha) \geq 2 \sqrt{2} n_{\alpha} + \varepsilon_1,$ where $\varepsilon_1 := \sqrt{5} + 1 -2\sqrt{2} \simeq 0.408\dots$, and with equality if and only if $n_{\alpha} = 1$ and $n = 4m+2 = 10$, that is $\alpha$ is a long diagonal of the decagon.
\item In all the other cases, we have
\begin{equation*}
l(\alpha) \geq \eta_0 n_{\alpha} + \varepsilon_0
\end{equation*}
where $\eta_0 := 2/(2+\sqrt{5}-2\sqrt{2}) = 2/(1+\varepsilon_1) \simeq 1,421\dots$ and $\varepsilon_0 := 2 \cos(\pi/10)-\eta_0 \simeq 0,481\dots$, and with equality if and only if $\alpha$ is a short diagonal of the decagon.
\end{enumerate}
\end{Prop}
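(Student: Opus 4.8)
The plan is to treat the four types separately, reducing each to the refined length estimates of Section~\ref{sec:study_lengths}. Type~(1) is immediate: a side has length exactly $l_0 = 1$. For type~(2), the saddle connection $\Delta$ has displacement vector $(2+\cos(2\pi/n),\sin(2\pi/n))$, so a one-line computation gives $l(\Delta)^2 = (2+\cos(2\pi/n))^2 + \sin^2(2\pi/n) = 5 + 4\cos(2\pi/n)$; since every type-(2) saddle connection is a dihedral image of $\Delta$, all of them share this length. The inequality $l(\Delta) > 2\sqrt{2}$ then amounts to $\cos(2\pi/n) > 3/4$, which holds for every $n \geq 10$ because $\cos(2\pi/n)$ increases with $n$ and already $\cos(2\pi/10) = \cos(\pi/5) > 3/4$.

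The substantive cases are (3) and (4). In both I would first invoke the dihedral symmetry to assume $\Sigma_\alpha = \Sigma_0$, so that Lemma~\ref{lem:long_segments} and Lemma~\ref{lem:length_consecutive_short} apply verbatim. For type~(4) the idea is to partition the segments $\alpha_1,\dots,\alpha_{n_\alpha}$ into two kinds of blocks: maximal trips through a short cylinder, and isolated long non-sandwiched segments (with endpoints on sides $\sigma_\alpha(j)$, $j\geq 3$) belonging to no trip, the short diagonal being treated directly. A trip of $p$ segments contributes length at least $\sqrt{2}\,p + \varepsilon_0$ by Lemma~\ref{lem:length_consecutive_short}, while each isolated long segment contributes at least $2\cos(\pi/n) = \sqrt{2} + \bigl(2\cos(\pi/n)-\sqrt{2}\bigr) \geq \sqrt{2} + \varepsilon_0$ by Lemma~\ref{lem:long_segments}, the last inequality holding for all $n\geq 10$ with equality if and only if $n=10$. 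Summing over blocks gives $l(\alpha) \geq \sqrt{2}\,n_\alpha + (\#\text{blocks})\,\varepsilon_0 \geq \sqrt{2}\,n_\alpha + \varepsilon_0$, since a type-(4) saddle connection always carries at least one such block. Reading off the equality cases of both lemmas then forces $n=10$ and pins $\alpha$ down to the short diagonal.

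Type~(3) uses the same bookkeeping with the stronger per-segment estimate available inside the big cylinder: each segment of such an $\alpha$ runs between the two farthest sides $\sigma_\alpha(n/2-1)$ and $\sigma_\alpha(n/2)$, hence crosses the full width of the big cylinder, which one checks exceeds $2\sqrt{2}$ for every $n\geq 10$ (the extremal case being the decagon). Summing the $n_\alpha$ segment lengths and adding the endpoint correction $\varepsilon_1$ yields $l(\alpha)\geq 2\sqrt{2}\,n_\alpha + \varepsilon_1$, the equality discussion isolating the long diagonal of the decagon ($n_\alpha=1$, $n=10$).

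The main obstacle is the accounting in type~(4): because the target bound carries a \emph{single} additive $\varepsilon_0$ rather than one per block, the partition into trips and isolated long segments must be arranged so that every segment is counted exactly once and no $\varepsilon_0$ is lost or double-counted at the junctions where a long segment could be shared between two consecutive short-cylinder visits — the definition of a maximal trip is tailored precisely to control this. The second delicate point is sharpness: the exact equality characterizations require the sharp forms of Lemma~\ref{lem:long_segments} and Lemma~\ref{lem:length_consecutive_short} exactly at $n=10$, where $2\cos(\pi/n)-\sqrt{2}$ collapses to $\varepsilon_0$ and the short and long diagonals realize the bounds in types~(4) and~(3) respectively.
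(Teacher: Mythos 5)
Your handling of types (1), (2) and (4) is essentially the paper's own route: (1) and (2) are the same direct computations, and for type (4) the paper obtains the bound exactly as you describe, by combining Lemma~\ref{lem:long_segments} with Lemma~\ref{lem:length_consecutive_short} over a decomposition into maximal trips and isolated long segments (the non-overlap of blocks is indeed what the definition of a maximal trip secures, since two consecutive trips are bounded by distinct long segments). Be aware, though, that in the paper the proof of Lemma~\ref{lem:length_consecutive_short} --- the unfolding of a trip through the short cylinder and the case analysis over $p$ and $q$ --- is carried out \emph{inside} the proof of this very proposition, so by citing that lemma you are deferring almost all of the substance of case (4); this is defensible because the lemma is stated in the preliminaries, but it should be said.

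The genuine gap is in type (3). You claim every segment of a type-(3) saddle connection ``crosses the full width of the big cylinder, which one checks exceeds $2\sqrt{2}$ for every $n\geq 10$'', and then sum per-segment bounds and ``add the endpoint correction $\varepsilon_1$''. Neither step holds. Measured along the cylinder direction, an intermediate segment enters the polygon through the lower boundary of the cylinder, whose points lie up to $\sin(\pi/n)$ above the bottom vertex, and exits through the upper boundary, whose points lie up to $\sin(\pi/n)$ below the top vertex; the only per-segment bound this yields is $L_n - 2\sin(\pi/n)$, where $L_n$ is the longest diagonal. For $n=10$ this equals $\varphi_{10}^2-1 \approx 2.62 < 2\sqrt{2} \approx 2.83$, and intermediate segments of this length really occur: a type-(3) saddle connection whose direction is close to the cylinder direction has middle segments running alongside the boundary diagonals of the cylinder, of length arbitrarily close to $\varphi_{10}^2-1$. (The literal transverse width of the big cylinder is smaller still, about $\cos(\pi/10)\approx 0.95$.) So the decagon, which you single out as the extremal case, is precisely where your per-segment claim fails; it would only become true for $n\geq 14$. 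Moreover, even where a bound of $2\sqrt{2}$ per segment is available, summing it yields $2\sqrt{2}\,n_\alpha$ with no additive constant: your ``$+\varepsilon_1$'' is asserted, not derived.

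The paper's Lemma~\ref{lem:case3} avoids both problems with a global, not per-segment, accounting. Unfold $\alpha$ into a chain of $n$-gons along the big cylinder and measure displacement \emph{in the cylinder direction only}: since $\alpha$ starts at the bottom vertex of the first polygon and, by the definition of type (3), must end at the top vertex of the last one, this displacement telescopes to $L_n + (n_\alpha - 1)\bigl(L_n - \sin(\pi/n)\bigr)$, where the subtracted $\sin(\pi/n)$ accounts for the vertical overlap of consecutive polygons in the chain. The first polygon contributes the full diagonal $L_n \geq 2\sqrt{2}+\varepsilon_1$ --- this, and nothing else, is the source of the additive constant --- while each further polygon contributes $L_n - \sin(\pi/n) > 2\sqrt{2}$. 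It is exactly the overlap term, invisible to any per-segment count, that makes the global estimate strictly better than the sum of the segmentwise ones; without this argument, case (3) and its equality discussion remain unproven.
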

We provide a proof of this Proposition in Section \ref{sec:study_lengths}. The constants $\varepsilon_1, \eta_0$ and $\varepsilon_0$ may seem intriguing at first sight, but the additional length will help us compensate for the possible additional singular intersections. There is actually a slight margin in the choice of constants, and the key relations that we will need are
\begin{itemize}
    \item $\eta_0 \geq \sqrt{2}$;
    \item $(1+\varepsilon_1 )\eta_0 \geq 2$ (here, it is $2$ by definition);
    \item $1+\varepsilon_0 \geq \sqrt{2}$;
    \item and $3\sqrt{2}\varepsilon_0 + \varepsilon_0^2 \geq  2$.
\end{itemize}

\begin{Rema}
The length estimate $l(\alpha) \geq \eta_0 n_\alpha + \varepsilon_0$ does not hold for the saddle connection $\Delta$ (and therefore, any saddle connection of type (2)). This is the main reason why these saddle connections have to be considered separately.
\end{Rema}

Concerning the intersections, we show in Section \ref{sec:study_intersections}:
\begin{Prop}[Intersection estimates]\label{prop:study_intersections}
Given two saddle connections $\alpha$ and $\beta$ on the $(4m+2)$-gon, the following (symmetric) table gives an upper bound on the number $|\alpha \cap \beta|$ of non-singular intersection points between $\alpha$ and $\beta$ in terms of $n_{\alpha}, n_{\beta}$.
\begin{center}
\begin{tabular}{ |c|c||c|c|c|c| } 
\hline
& & \multicolumn{4}{|c|}{Type of $\alpha$} \\ 
\hline
& & (1) & (2) & (3) & (4)\\
 \hline
\multirow{4}{4em}{Type of $\beta$}& (1) & 0 & 1 & $n_{\alpha}-1$ & $n_{\alpha}-1$ \\ 

& (2) & $\star$ & $2$ & $2n_{\alpha}$ & $2n_{\alpha}-1$\\ 
& (3) & $\star$ & $\star$ & $n_{\alpha} n_{\beta}$ & $2n_{\alpha}n_{\beta} - 1$ \\ 
& (4) & $\star$ & $\star$ & $\star$ & $n_{\alpha} n_{\beta}$ \\
\hline
\end{tabular}
\end{center}
\end{Prop}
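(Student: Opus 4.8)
The plan is to read off essentially the whole table from Proposition \ref{prop:intersection_BOU23}, combined with a few structural facts about the sandwiched/non-sandwiched decomposition, and to reserve a direct geometric inspection for the only two cells that genuinely require saving an intersection. Throughout I write $q_\alpha$ for the number of sandwiched segments of $\alpha$.

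First I would record the combinatorial facts I need. (a) A type-(1) saddle connection (a side) has $n_\alpha=1$ and $q_\alpha=0$, and the type-(2) saddle connection $\Delta$ has $n_\Delta=2$ with $q_\Delta=0$, since in its sector $\Sigma_0$ its cutting sequence crosses a single non-sandwiched side and neither piece is sandwiched. (b) A type-(3) saddle connection has $q_\alpha=0$: by definition it only crosses the non-sandwiched sides $\sigma_\alpha(n/2-1)$ and $\sigma_\alpha(n/2)$, hence never the sandwiched side $\sigma_\alpha(1)$. (c) In general $q_\alpha\le n_\alpha-1$, because the initial segment is always non-sandwiched. I would also note that the decomposition cuts $\alpha$ exactly at its $n_\alpha-1$ crossings of non-sandwiched sides, while its crossings of the sandwiched side $\sigma_\alpha(1)$ number exactly $q_\alpha$ (one per sandwiched segment).

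With these in hand the bulk of the table is immediate. Cell (4)-(4) is exactly the generic bound $|\alpha\cap\beta|\le n_\alpha n_\beta$ of Proposition \ref{prop:intersection_BOU23}. For (3)-(3) and (2)-(3), taking $\alpha$ of type (3), the proposition gives $|\alpha\cap\beta|\le n_\alpha(n_\beta+q_\beta)$, and since the partner $\beta$ has $q_\beta=0$ by (a)--(b) this collapses to $n_\alpha n_\beta$ and to $2n_\alpha$ respectively. For (3)-(4) the same inequality with $q_\beta\le n_\beta-1$ from (c) yields $|\alpha\cap\beta|\le n_\alpha(2n_\beta-1)=2n_\alpha n_\beta-n_\alpha\le 2n_\alpha n_\beta-1$. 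The entire type-(1) row and column I would dispatch with a single lemma: for a side $\beta$ and any saddle connection $\alpha$, $|\alpha\cap\beta|\le n_\alpha-1$. Indeed $\beta$ has one fixed label, and $\alpha$ crosses that label at most $n_\alpha-1$ times if it is non-sandwiched and exactly $q_\alpha\le n_\alpha-1$ times if it is $\sigma_\alpha(1)$. This covers (1)-(1) (value $0$), (1)-(3) and (1)-(4) (value $n_\alpha-1$), and, using $n_\Delta=2$, (1)-(2) (value $1$).

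This leaves the two delicate cells (2)-(2) and (2)-(4), where Proposition \ref{prop:intersection_BOU23} only yields $4$ and $2n_\alpha$ and intersections must be saved. For (2)-(2) I would argue by a finite inspection: using the dihedral symmetry to normalise $\Delta$ and letting $\Delta'$ range over a set of representatives of its images, one checks directly that two such short saddle connections meet at most twice. The hard part will be the $-1$ improvement in (2)-(4): here each of the two (non-sandwiched, non-longest) segments of $\Delta$ meets each segment of $\alpha$ at most once, giving the raw bound $2n_\alpha$, and one must exhibit one potential crossing that never occurs. I expect this to come from a boundary argument exploiting that both endpoints of $\Delta$ are singularities, showing that an extreme (initial or terminal) segment of $\alpha$ can cross $\Delta$ at most once, which improves $2n_\alpha$ to $2(n_\alpha-1)+1=2n_\alpha-1$. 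Turning this degeneration-at-a-vertex heuristic into a rigorous positional statement, as opposed to the routine counting used in every other cell, is the main obstacle, and is precisely the kind of careful analysis of cutting sequences carried out in Section \ref{sec:study_intersections}.
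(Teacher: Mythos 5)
Your treatment of the routine cells is correct and essentially identical to the paper's: the type-(1) row via counting crossings of a single side label (at most the $n_\alpha-1$ cut points if the label is non-sandwiched for $\alpha$, or $q_\alpha\le n_\alpha-1$ crossings if it is $\sigma_\alpha(1)$), and the cells $(3/2)$, $(3/3)$, $(3/4)$, $(4/4)$ via Proposition \ref{prop:intersection_BOU23} together with the observation that $q=0$ for types (1), (2), (3). Your consolidation of the whole type-(1) row into one lemma is a mild streamlining of the paper's case-by-case discussion, and your derivation of $(3/4)$ from the second estimate of Proposition \ref{prop:intersection_BOU23} with $q_\beta\le n_\beta-1$ is the same computation the paper performs.

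The gap is in the two cells you yourself flag, and it is genuine. For $(2/2)$ you propose a finite inspection over dihedral representatives but do not carry it out; this is plausible and completable (the paper is also very terse there). The real problem is $(2/4)$. First, your raw bound rests on the claim that each segment of $\Delta$ meets each segment of $\alpha$ at most once; this is unjustified when the segment of $\alpha$ is sandwiched, since a sandwiched segment traverses the polygon as two straight pieces and can a priori meet a single straight segment of $\Delta$ twice (the paper instead asserts that each segment of $\alpha$ meets the union $\Delta$ at most twice). Second, and more seriously, the mechanism you propose for the $-1$ --- that an extreme (initial or terminal) segment of $\alpha$ crosses $\Delta$ at most once --- cannot be true as stated: if it were, the identical argument would improve the $(2/3)$ entry to $2n_\alpha-1$, whereas that entry is $2n_\alpha$ precisely because type-(3) saddle connections, whose extreme segments start and end at the vertices $A$, $A'$ common to the sides of labels $\sigma_0(1)$ and $\sigma_0(2)$, can cross $\Delta$ twice on every segment, extreme ones included. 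The paper's proof supplies exactly the refinement your heuristic is missing: it characterizes which segments can cross $\Delta$ twice (those with endpoints on sides of labels $\sigma_0(1)$ and $\sigma_0(2)$, and sandwiched segments), shows that an initial or terminal segment crosses $\Delta$ at most once \emph{unless} its singular endpoint is $A$ or $A'$ and its non-singular endpoint lies on $\sigma_0(1)$ or $\sigma_0(2)$, and then observes that the only saddle connections satisfying all these conditions simultaneously are those of type (3); hence any type-(4) saddle connection must lose at least one crossing, giving $2n_\alpha-1$. Without this explicit use of the type-(4) hypothesis to rule out the extremal configurations, your argument does not close.
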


Finally, in Section \ref{sec:conclusion} we use Propositions \ref{prop:study_lengths} and \ref{prop:study_intersections} to analyze all possible cases of pairs of closed curves which are made of one closed saddle connection or two non-closed saddle connections.  

\subsection{Study of the lengths}\label{sec:study_lengths}
We now prove Proposition \ref{prop:study_lengths}. Since case 1. is trivial and case 2. is a direct computation, it remains to study cases 3. and 4.

\subsubsection*{Case 3.} Let us first assume that $\alpha$ is a saddle connection strictly contained inside a big cylinder. We start with the following lemma, whose proof is left to the reader.

\begin{Lem}[Diagonals of the decagon]\label{lem:length_diagonals}
Let $\varphi_{10} = 2 \cos \frac{\pi}{10}$. The lengths of the diagonals of a regular, unit-sided decagon are given by, in increasing order $\varphi_{10}, \varphi_{10}^2-1$, $\varphi_{10}^3 - 2 \varphi_{10}$ and $\varphi_{10}^4 - 3 \varphi_{10}^2 + 1$.\newline

Further
\[ \varphi_{10}^4 - 3 \varphi_{10}^2 + 1 = \sqrt{5}+1 = 2 \sqrt{2} + \varepsilon_1. \]
\end{Lem}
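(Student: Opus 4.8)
The plan is to prove the two assertions in turn: first the list of diagonal lengths, then the closed form of the longest one.

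For the lengths, I would parametrise the diagonals of the regular decagon by the number $k$ of edges separating their endpoints along the boundary, with $k \in \{2,3,4,5\}$ (the value $k=1$ gives a side, and $k$ and $10-k$ give congruent diagonals, so these four values exhaust the distinct diagonals). Inscribing the decagon in its circumcircle, the diagonal separated by $k$ edges subtends a central angle $2k\pi/10$ while the unit side subtends $2\pi/10$, giving
\[ d_k = \frac{\sin(k\pi/10)}{\sin(\pi/10)}. \]
Since $\sin$ is strictly increasing on $[0,\pi/2]$ and all the angles $k\pi/10$ lie in that range, we immediately get $d_2<d_3<d_4<d_5$, which is the claimed increasing order. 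To rewrite the $d_k$ as polynomials in $\varphi_{10}=2\cos(\pi/10)$, I would use the elementary identity $\sin((k+1)\theta) = 2\cos\theta\,\sin(k\theta) - \sin((k-1)\theta)$ with $\theta=\pi/10$, which upon dividing by $\sin\theta$ becomes the recurrence
\[ d_{k+1} = \varphi_{10}\, d_k - d_{k-1}, \qquad d_1 = 1,\quad d_2 = \varphi_{10}. \]
Unrolling it three times gives $d_3 = \varphi_{10}^2-1$, $d_4 = \varphi_{10}^3-2\varphi_{10}$ and $d_5 = \varphi_{10}^4-3\varphi_{10}^2+1$, exactly the stated list.

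For the closed form of $d_5 = \varphi_{10}^4-3\varphi_{10}^2+1$, the only computation is to reduce $\varphi_{10}$ to radicals. I would start from $\varphi_{10}^2 = 4\cos^2(\pi/10) = 2+2\cos(\pi/5)$ and use $2\cos(\pi/5) = \phi$, where $\phi := \frac{1+\sqrt5}{2}$ is the golden ratio, so that $\varphi_{10}^2 = 2+\phi$. Substituting and simplifying with $\phi^2 = \phi+1$ yields
\[ \varphi_{10}^4 - 3\varphi_{10}^2 + 1 = (2+\phi)^2 - 3(2+\phi) + 1 = 2\phi = 1+\sqrt5. \]
This is the value one expects geometrically, since $d_5$ is the diameter $2R$ and the circumradius of the unit-sided decagon is $R = \phi$.

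Every step here is elementary, so the only point genuinely requiring care is the comparison of this exact value with the constant $2\sqrt2+\varepsilon_1$ that is fed into Proposition~\ref{prop:study_lengths}(3). Carried out exactly, the simplification gives $d_5 = 1+\sqrt5 \simeq 3.236$, whereas $3\sqrt2-1 \simeq 3.243$; since the equality case of that proposition is meant to be realised at the long diagonal, one should therefore take $\varepsilon_1 = 1+\sqrt5-2\sqrt2$ rather than $\sqrt2-1$, the latter being slightly too large and hence making the stated lower bound fail at $d_5$. I would accordingly track this constant through radicals rather than numerically before propagating it into the length estimates, as the sharpness of Theorem~\ref{theo:decagon} rests on these exact values.
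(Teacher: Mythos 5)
Your derivation of the list of diagonal lengths is correct, and it is the natural argument for the part of the lemma whose proof the paper leaves to the reader: the chord formula $d_k = \sin(k\pi/10)/\sin(\pi/10)$ for $k=2,3,4,5$, monotonicity of $\sin$ on $[0,\pi/2]$ for the ordering, and the recurrence $d_{k+1} = \varphi_{10}d_k - d_{k-1}$ for the polynomial expressions.

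More importantly, your closing objection is right, and it identifies a genuine error in the statement itself rather than a gap in your proof. Since $\varphi_{10}^2 = 2+2\cos(\pi/5) = 2+\phi$ with $\phi := \frac{1+\sqrt{5}}{2}$, one has
\[
\varphi_{10}^4 - 3\varphi_{10}^2 + 1 = (2+\phi)^2 - 3(2+\phi) + 1 = \phi^2 + \phi - 1 = 2\phi = 1+\sqrt{5} \simeq 3.23607,
\]
(equivalently, the circumradius of the unit-sided decagon is $\phi$, so the long diagonal is $2\phi$), whereas $3\sqrt{2}-1 \simeq 3.24264$; the two agree only to two decimal places, which is presumably how the misidentification occurred. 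Thus the displayed identity in the lemma is false, the correct constant is $\varepsilon_1 = 1+\sqrt{5}-2\sqrt{2} \simeq 0.40754 < \sqrt{2}-1$, and with the paper's value the bounds of Lemma \ref{lem:case3} and Proposition \ref{prop:study_lengths}(3) fail at the long diagonal itself (take $n_\alpha = 1$: $1+\sqrt{5} < 2\sqrt{2}+(\sqrt{2}-1)$). Your caution about propagating the exact constant is also warranted, because the repair is not purely cosmetic: in the proof of Lemma \ref{lem:caseIc}, the case where $\beta$ has type (4) closes precisely because $1+\varepsilon_1 \geq \sqrt{2}$ there (the needed inequality reduces to $\bigl(1-\tfrac{1+\varepsilon_1}{\sqrt{2}}\bigr)n_\beta < 1$), and this holds for the erroneous value $\varepsilon_1 = \sqrt{2}-1$ but fails for the true value once $n_\beta$ is large; that step therefore has to be redone, e.g.\ by invoking the $\varepsilon_0$ slack of Proposition \ref{prop:study_lengths}(4) together with a sharper length bound for saddle connections with many segments.
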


We can now estimate the length of saddle connections of type 3:
\begin{Lem}\label{lem:case3}
Assume that $\alpha$ is a saddle connection of type 3. Then
\begin{equation}\label{eq:length_big_cylinder}
 l(\alpha) \geq 2\sqrt{2} n_{\alpha} + \varepsilon_1
\end{equation}
with equality if and only if $\alpha$ is a long diagonal of the decagon.
\end{Lem}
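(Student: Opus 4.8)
The plan is to reduce to the sector $\Sigma_0$ and then develop $\alpha$ inside the big cylinder. By the action of the dihedral group we may assume $\Sigma_\alpha = \Sigma_0$, so that the sandwiched side is horizontal and $\alpha$ crosses only the two farthest sides $\sigma_0(n/2-1)$ and $\sigma_0(n/2)$, with both endpoints at the prescribed common vertices of these sides. In particular each of the $n_\alpha$ segments of $\alpha$ is a non-adjacent segment whose endpoints lie on sides of label $\geq 3$, so Lemma \ref{lem:long_segments} already gives $l(\alpha) \geq 2\cos(\pi/n)\, n_\alpha$. This is not enough, since $2\cos(\pi/n) < 2\sqrt{2}$; the extra length must therefore come from a global argument exploiting that the big cylinder is the \emph{widest} of the horizontal cylinders, rather than from a segment-by-segment estimate.

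First I would treat the case $n_\alpha = 1$ separately. Here $\alpha$ is a diagonal contained in the big cylinder, and one checks that the only such diagonal is the longest diagonal of the $n$-gon (the diameter joining the two extreme vertices where a $\sigma_0(n/2-1)$ side meets a $\sigma_0(n/2)$ side). By Lemma \ref{lem:length_diagonals} its length is exactly $2\sqrt{2} + (\sqrt{2}-1)$ when $n = 10$, and a short monotonicity computation in $n$ shows it is strictly larger for $n > 10$; this already records the equality case.

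For $n_\alpha \geq 2$ the main step is a developing-map computation. I would unfold the big cylinder into an infinite strip $\RR \times [0,H]$ of height $H$ equal to the transverse width of the cylinder, so that $\alpha$ becomes a straight segment of slope $\tan\theta$ with $\theta \in (0,\pi/n)$. Writing $l(\alpha) = \sqrt{X^2 + Y^2}$, the horizontal displacement $X$ grows linearly in $n_\alpha$ (each far-side crossing advances the developed trajectory by a fixed fraction of the circumference), while the transverse displacement $Y$ is pinned by the requirement that both endpoints be the prescribed boundary vertices. This reduces the estimate to minimizing $\sqrt{X(\theta,n_\alpha)^2 + Y(\theta,n_\alpha)^2}$ over the single parameter $\theta$, exactly in the spirit of Lemma \ref{lem:length_consecutive_short}; carrying out the minimization yields $l(\alpha) \geq 2\sqrt{2}\, n_\alpha + (\sqrt{2}-1)$, with the inequality strict once $n_\alpha \geq 2$.

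The hard part will be extracting the sharp constants: the slope $2\sqrt{2}$ per segment and the additive $\sqrt{2}-1 = \varepsilon_1$ require exact expressions for the circumference and the width of the big cylinder as functions of $n$, together with a proof that the extremal configuration is $n = 10$, $n_\alpha = 1$. I expect the monotonicity in $n$ and the identification of the unique equality case (only the decagon long diagonal) to be the most delicate points, whereas the reduction to a one-parameter minimization is routine once the developing picture is set up.
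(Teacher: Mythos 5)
Your reduction to $\Sigma_0$, your treatment of $n_\alpha = 1$ (the long diagonal, via Lemma \ref{lem:length_diagonals} and monotonicity in $n$), and the idea of unfolding the big cylinder are all aligned with the paper's argument. But for $n_\alpha \geq 2$ --- which is the whole content of the lemma --- your proof stops exactly where it should begin: you assert that ``carrying out the minimization yields $l(\alpha) \geq 2\sqrt{2}\, n_\alpha + (\sqrt{2}-1)$'' and then explicitly defer ``extracting the sharp constants,'' which is precisely what has to be done. The paper's proof consists of exactly that missing computation: in the unfolded chain of $n$-gons, with the core direction of the cylinder drawn vertically, the first segment of $\alpha$ accounts for a vertical displacement equal to the longest diagonal $L_n \geq L_{10}$, and every further segment accounts for an additional vertical displacement of exactly $L_n - \sin\left(\frac{\pi}{n}\right) \geq L_{10} - \sin\left(\frac{\pi}{10}\right) \approx 2.93 > 2\sqrt{2}$, because that is the spacing, along the core direction, between consecutive lifts of the terminal singularity in the unfolding. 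Summing these contributions and using that $l(\alpha)$ is at least its vertical component gives the bound, and the equality analysis (the trajectory must itself be vertical, forcing $n_\alpha = 1$, and $n$ must equal $10$) falls out of the same inequalities. None of these constants, nor the verification that the per-segment rate beats $2\sqrt{2}$ while the additive $\sqrt{2}-1$ survives, appears anywhere in your proposal.

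A secondary problem is the shape of the reduction you call ``routine'': a minimization over a slope parameter $\theta$ is not the right mechanism, because type-3 saddle connections form a discrete family --- once the number of crossings and the terminal vertex are fixed, the straight segment is completely determined, so there is no free $\theta$ to vary. What is actually needed is a lower bound on the core-direction displacement forced by each crossing, i.e., the location of the possible terminal singularities in the unfolded strip; your phrase ``each far-side crossing advances the developed trajectory by a fixed fraction of the circumference'' gestures at this but leaves the advance unidentified and unestimated. Since the lemma is a statement about specific constants ($2\sqrt{2}$ per segment plus $\sqrt{2}-1$, sharp exactly for the decagon's long diagonal), an argument that never produces those numbers has not proved it.
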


\begin{proof}
Given a saddle connection $\alpha$ of type 3, we unfold $\alpha$ to obtain a chain of $n$-gons in which $\alpha$ is now a straight line, as in Figure \ref{fig:unfolding_big_cylinder}. We will assume as in the figure that the direction of the cylinder is vertical, and $\alpha$ starts from the bottom singularity $X$. Notice that, by definition, $\alpha$ must end at the top singularity $\bigcirc$. In particular, the length of $\alpha$ can be estimated as follows:
\begin{itemize}
\item The first segment accounts for a vertical length equal to the length of the longest diagonal $L_n$ of the regular $n$-gon, and we have:
\[ L_n \geq L_{10} = 2\sqrt{2} + \varepsilon_1\]
\item Each additional segment accounts for an additional vertical length equal to the length $L_n$ of the longest diagonal minus $\sin(\pi /n)$. Further,
\[ L_n - \sin \left(\frac{\pi}{n}\right) \geq L_{10} - \sin\left(\frac{\pi}{10}\right) \simeq 2.9336\dots > 2\sqrt{2}. \]
\end{itemize}

In particular, the vertical length of $\alpha$ is at least $2 \sqrt{2} n_{\alpha} + \varepsilon_1$ and therefore the same holds for the total length of $\alpha$. This proves Lemma \ref{lem:case3}.
\end{proof}
\begin{figure}[h]
\center
\definecolor{qqwuqq}{rgb}{0,0.39215686274509803,0}
\definecolor{zzttqq}{rgb}{0.6,0.2,0}
\definecolor{uuuuuu}{rgb}{0.26666666666666666,0.26666666666666666,0.26666666666666666}
\begin{tikzpicture}[line cap=round,line join=round,>=triangle 45,x=1cm,y=1cm]
\clip(-4,-1) rectangle (9,10);
\fill[line width=1pt,color=zzttqq,fill=zzttqq,fill opacity=0.10000000149011612] (-1,2.7527638409423463) -- (-1,0) -- (0,-0.3249196962329063) -- (1,0) -- (1,2.7527638409423463) -- (0,3.0776835371752527) -- cycle;
\fill[line width=1pt,color=zzttqq,fill=zzttqq,fill opacity=0.10000000149011612] (0,3.0776835371752527) -- (0,5.830447378117599) -- (-1,6.155367074350505) -- (-2,5.830447378117599) -- (-2,3.0776835371752527) -- (-1,2.7527638409423463) -- cycle;
\fill[line width=1pt,color=zzttqq,fill=zzttqq,fill opacity=0.10000000149011612] (0,5.830447378117599) -- (1,6.155367074350505) -- (2,5.830447378117599) -- (2,3.0776835371752527) -- (1,2.7527638409423463) -- (0,3.0776835371752527) -- cycle;
\fill[line width=1pt,color=zzttqq,fill=zzttqq,fill opacity=0.10000000149011612] (-1,6.155367074350505) -- (-2,5.830447378117599) -- (-3,6.155367074350505) -- (-3,8.908130915292851) -- (-2,9.233050611525758) -- (-1,8.908130915292851) -- (0,9.233050611525758) -- (1,8.908130915292851) -- (2,9.233050611525758) -- (3,8.908130915292851) -- (3,6.155367074350505) -- (2,5.830447378117599) -- (1,6.155367074350505) -- (0,5.830447378117599) -- cycle;
\draw [line width=1pt] (0,-0.3249196962329063)-- (1,0);
\draw [line width=1pt] (1,0)-- (1.6180339887498947,0.8506508083520394);
\draw [line width=1pt] (1.6180339887498947,0.8506508083520394)-- (1.6180339887498947,1.9021130325903064);
\draw [line width=1pt] (1.6180339887498947,1.9021130325903064)-- (1,2.7527638409423463);
\draw [line width=1pt] (1,2.7527638409423463)-- (0,3.0776835371752527);
\draw [line width=1pt] (0,3.0776835371752527)-- (-1,2.7527638409423463);
\draw [line width=1pt] (-1,2.7527638409423463)-- (-1.6180339887498942,1.902113032590307);
\draw [line width=1pt] (-1.6180339887498942,1.902113032590307)-- (-1.6180339887498942,0.8506508083520402);
\draw [line width=1pt] (-1.6180339887498942,0.8506508083520402)-- (-1,0);
\draw [line width=1pt] (-1,0)-- (0,-0.3249196962329063);
\draw [line width=1pt,dash pattern=on 3pt off 3pt,domain=-4.671695023164216:5.5] plot(\x,{(--3.0776835371752522-0*\x)/1});
\draw [line width=1pt,dash pattern=on 3pt off 3pt,color=zzttqq] (0,3.0776835371752527)-- (0,5.830447378117599);
\draw [line width=1pt,color=zzttqq] (0,5.830447378117599)-- (-1,6.155367074350505);
\draw [line width=1pt,color=zzttqq] (-1,6.155367074350505)-- (-2,5.830447378117599);
\draw [line width=1pt,color=zzttqq] (-2,5.830447378117599)-- (-2,3.0776835371752527);
\draw [line width=1pt,color=zzttqq] (-2,3.0776835371752527)-- (-1,2.7527638409423463);
\draw [line width=1pt,color=zzttqq] (-1,2.7527638409423463)-- (0,3.0776835371752527);
\draw [line width=1pt,color=zzttqq] (1,6.155367074350505)-- (2,5.830447378117599);
\draw [line width=1pt,color=zzttqq] (2,5.830447378117599)-- (2,3.0776835371752527);
\draw [line width=1pt,color=zzttqq] (2,3.0776835371752527)-- (1,2.7527638409423463);
\draw [line width=1pt,dash pattern=on 3pt off 3pt,domain=-4.671695023164216:5.5] plot(\x,{(--12.310734148701009-0*\x)/2});
\draw [line width=1pt,color=zzttqq] (-1,6.155367074350505)-- (-2,5.830447378117599);
\draw [line width=1pt,color=zzttqq] (-2,5.830447378117599)-- (-3,6.155367074350505);
\draw [line width=1pt,color=zzttqq] (-3,6.155367074350505)-- (-3,8.908130915292851);
\draw [line width=1pt,color=zzttqq] (-3,8.908130915292851)-- (-2,9.233050611525758);
\draw [line width=1pt,color=zzttqq] (-2,9.233050611525758)-- (-1,8.908130915292851);
\draw [line width=1pt,color=zzttqq] (-1,8.908130915292851)-- (0,9.233050611525758);
\draw [line width=1pt,color=zzttqq] (0,9.233050611525758)-- (1,8.908130915292851);
\draw [line width=1pt,color=zzttqq] (1,8.908130915292851)-- (2,9.233050611525758);
\draw [line width=1pt,color=zzttqq] (2,9.233050611525758)-- (3,8.908130915292851);
\draw [line width=1pt,color=zzttqq] (3,8.908130915292851)-- (3,6.155367074350505);
\draw [line width=1pt,color=zzttqq] (3,6.155367074350505)-- (2,5.830447378117599);
\draw [line width=1pt,color=zzttqq] (2,5.830447378117599)-- (1,6.155367074350505);
\draw [line width=1pt,color=zzttqq] (1,6.155367074350505)-- (0,5.830447378117599);
\draw [line width=1pt,color=zzttqq] (0,5.830447378117599)-- (-1,6.155367074350505);
\draw [line width=1pt,dash pattern=on 3pt off 3pt,domain=-4.671695023164216:5.5] plot(\x,{(--36.93220244610303-0*\x)/4});
\draw [line width=1pt,dash pattern=on 3pt off 3pt,color=zzttqq] (-1,6.155367074350505)-- (-1,8.908130915292851);
\draw [line width=1pt,dash pattern=on 3pt off 3pt,color=zzttqq] (1,8.908130915292851)-- (1,6.155367074350505);
\draw [line width=1pt,dash pattern=on 3pt off 3pt] (-4, -0.3249196962329063)--(5.5, -0.3249196962329063);
\draw [line width=1pt,to-to] (3.9949607643140217,3.0776835371752527)-- (4.036901292738877,-0.3249196962329063);
\draw [line width=1pt,to-to] (3.9949607643140217,3.0776835371752527)-- (4,6.155367074350505);
\draw [line width=1pt,to-to] (4,6.155367074350505)-- (3.9949607643140217,9.233050611525758);
\draw [line width=1pt,color=qqwuqq] (0,-0.3249196962329063)-- (1,6.155367074350505);
\draw [line width=1pt,color=qqwuqq] (0,-0.3249196962329063)-- (-1,6.155367074350505);
\draw [line width=1pt,color=qqwuqq] (0,-0.3249196962329063)-- (2,9.233050611525758);
\draw [line width=1pt,color=qqwuqq] (0,-0.3249196962329063)-- (-2,9.233050611525758);
\draw (4.3,1.9) node[anchor=north west] {$L_{10} = 2 \sqrt{2} + \varepsilon_1$};
\draw (4.3,4.9) node[anchor=north west] {$L_{10} - \sin \left(\frac{\pi}{10}\right) > 2 \sqrt{2}$};
\draw [line width=1pt,color=qqwuqq] (0,-0.3249196962329063)-- (0,3.0776835371752527);
\draw (4.3,8.1) node[anchor=north west] {$L_{10} - \sin \left(\frac{\pi}{10}\right) > 2 \sqrt{2}$};
\begin{scriptsize}
\draw [fill=black] (1,0) circle (2pt);
\draw [color=uuuuuu] (0,-0.3249196962329063)-- ++(-2pt,-2pt) -- ++(4pt,4pt) ++(-4pt,0) -- ++(4pt,-4pt);
\draw [color=uuuuuu] (1.6180339887498947,0.8506508083520394)-- ++(-2pt,-2pt) -- ++(4pt,4pt) ++(-4pt,0) -- ++(4pt,-4pt);
\draw [fill=uuuuuu] (1.6180339887498947,1.9021130325903064) circle (2pt);
\draw [color=uuuuuu] (1,2.7527638409423463)-- ++(-2pt,-2pt) -- ++(4pt,4pt) ++(-4pt,0) -- ++(4pt,-4pt);
\draw [fill=uuuuuu] (0,3.0776835371752527) circle (2pt);
\draw [color=uuuuuu] (-1,2.7527638409423463)-- ++(-2pt,-2pt) -- ++(4pt,4pt) ++(-4pt,0) -- ++(4pt,-4pt);
\draw [fill=uuuuuu] (-1.6180339887498942,1.902113032590307) circle (2pt);
\draw [color=uuuuuu] (-1.6180339887498942,0.8506508083520402)-- ++(-2pt,-2pt) -- ++(4pt,4pt) ++(-4pt,0) -- ++(4pt,-4pt);
\draw [fill=uuuuuu] (-1,0) circle (2pt);
\draw [color=uuuuuu] (0,5.830447378117599)-- ++(-2pt,-2pt) -- ++(4pt,4pt) ++(-4pt,0) -- ++(4pt,-4pt);
\draw [fill=uuuuuu] (-1,6.155367074350505) circle (2pt);
\draw [fill=uuuuuu] (1,6.155367074350505) circle (2pt);
\draw [fill=uuuuuu] (-2,3.0776835371752527) circle (2pt);
\draw [fill=uuuuuu] (2,3.0776835371752527) circle (2pt);
\draw [color=uuuuuu] (-2,5.830447378117599)-- ++(-2pt,-2pt) -- ++(4pt,4pt) ++(-4pt,0) -- ++(4pt,-4pt);
\draw [color=uuuuuu] (2,5.830447378117599)-- ++(-2pt,-2pt) -- ++(4pt,4pt) ++(-4pt,0) -- ++(4pt,-4pt);
\draw [fill=uuuuuu] (0,9.233050611525758) circle (2pt);
\draw [color=uuuuuu] (1,8.908130915292851)-- ++(-2pt,-2pt) -- ++(4pt,4pt) ++(-4pt,0) -- ++(4pt,-4pt);
\draw [color=uuuuuu] (-1,8.908130915292851)-- ++(-2pt,-2pt) -- ++(4pt,4pt) ++(-4pt,0) -- ++(4pt,-4pt);
\draw [fill=uuuuuu] (-2,9.233050611525758) circle (2pt);
\draw [fill=uuuuuu] (2,9.233050611525758) circle (2pt);
\draw [fill=uuuuuu] (-3,6.155367074350505) circle (2pt);
\draw [fill=uuuuuu] (3,6.155367074350505) circle (2pt);
\draw [color=uuuuuu] (-3,8.908130915292851)-- ++(-2pt,-2pt) -- ++(4pt,4pt) ++(-4pt,0) -- ++(4pt,-4pt);
\draw [color=uuuuuu] (3,8.908130915292851)-- ++(-2pt,-2pt) -- ++(4pt,4pt) ++(-4pt,0) -- ++(4pt,-4pt);
\end{scriptsize}
\end{tikzpicture}
\caption{The unfolding of the saddle connections staying inside the big cylinder with $n_{\alpha} \leq 3$ (and for $n = 10$). Here, $L_{10} = \sqrt{5} +1 = 2\sqrt{2} + \varepsilon_1$ is the length of the longest diagonal of the unit-sided regular decagon.}
\label{fig:unfolding_big_cylinder}
\end{figure}

\subsubsection*{Case 4.}
We now consider a saddle connection $\alpha$ of type 4. We recall that we want to show $l(\alpha) \geq \eta_0 n_\alpha + \varepsilon_0$, with equality if and only if $\alpha$ is a short diagonal and $n=10$.
The main idea is to distinguish \emph{short} and \emph{long} segments in the decomposition of $\alpha$. More precisely, one should notice that:

\begin{Lem}\label{lem:long_segments}
If $\alpha_i$ is a non-sandwiched segment whose endpoints lies on sides of label $\sigma_\alpha(j)$ for $j \geq 3$, then $l(\alpha_i) \geq 2 \cos \left(\frac{\pi}{n}\right) l_0$, with equality if and only if $\alpha = \alpha_i$ is a short diagonal of the decagon.
\end{Lem}
See the left part of Figure \ref{fig:short_long_segments}. In particular, such segments have length at least $ 2 \cos \pi / 10 = \eta_0+ \varepsilon_0$. Such segments will therefore be referred to as \emph{long non-adjacent segments} while the other segments will be called \emph{short segments} and are either sandwiched segments or non-sandwiched segments with an endpoint on the side $\sigma_\alpha(2)$. Geometrically speaking the short segments are those contained in a short cylinder in the direction of the side $\sigma_{\alpha}(1)$, see the right of Figure \ref{fig:short_long_segments} in the case where $\Sigma_\alpha = \Sigma_0$. This comes from the fact that the sides of label $\sigma_\alpha(1)$ are adjacent to both a side of label $\sigma_\alpha(2)$ and $\sigma_\alpha(3)$. In particular, although on their own short segments may have length as close as $l_0$ as possible, it turns out that considering a \emph{maximal trip through a short cylinder} (in the direction of $\sigma_\alpha(1)$) yields suitable length estimates:

\begin{Def}
A \emph{maximal trip through a short cylinder} is a sequence of segments $\alpha_{i_0}, \dots, \alpha_{i_0+p}$ of $\alpha$ such that:
\begin{itemize}
\item The segments $\alpha_{i_0 + 1}, \dots, \alpha_{i_0 + p-1}$ are either sandwiched segments or non-sandwiched segments with an endpoint on the side $\sigma_\alpha(2)$;
\item The segments $\alpha_{i_0}$ and $\alpha_{i_0+p}$ are either initial (resp. terminal) segments or have their endpoints on sides of label $\sigma_\alpha(3)$ and $\sigma_\alpha(4)$.
\end{itemize}
\end{Def}

\begin{figure}
\center
\definecolor{qqwuqq}{rgb}{0,0.39215686274509803,0}
\definecolor{uuuuuu}{rgb}{0.26666666666666666,0.26666666666666666,0.26666666666666666}
\begin{tikzpicture}[line cap=round,line join=round,>=triangle 45,x=1cm,y=1cm,scale=1.5]
\clip(-1.8,-0.35) rectangle (2.8,3.35);
\draw [shift={(0,0)},line width=1pt,color=qqwuqq,fill=qqwuqq,fill opacity=0.10000000149011612] (0,0) -- (0:0.5664025228310235) arc (0:18:0.5664025228310235) -- cycle;
\draw [line width=1pt] (0,0)-- (1,0);
\draw [line width=1pt] (1,0)-- (1.809016994374947,0.5877852522924729);
\draw [line width=1pt] (1.809016994374947,0.5877852522924729)-- (2.118033988749895,1.5388417685876261);
\draw [line width=1pt] (2.118033988749895,1.5388417685876261)-- (1.8090169943749475,2.4898982848827793);
\draw [line width=1pt] (1.8090169943749475,2.4898982848827793)-- (1,3.0776835371752527);
\draw [line width=1pt] (1,3.0776835371752527)-- (0,3.077683537175253);
\draw [line width=1pt] (0,3.077683537175253)-- (-0.809016994374947,2.4898982848827798);
\draw [line width=1pt] (-0.809016994374947,2.4898982848827798)-- (-1.1180339887498945,1.5388417685876268);
\draw [line width=1pt] (-1.1180339887498945,1.5388417685876268)-- (-0.8090169943749475,0.5877852522924734);
\draw [line width=1pt] (-0.8090169943749475,0.5877852522924734)-- (0,0);
\draw [line width=1pt,dash pattern=on 3pt off 3pt] (0,0)-- (1.809016994374947,0.5877852522924729);
\draw [line width=1pt,dash pattern=on 3pt off 3pt] (-0.809016994374947,2.4898982848827798)-- (1,3.0776835371752527);
\draw [line width=1pt,color=qqwuqq] (-0.3236683994496628,0.23515885717160706)-- (2.0326289836979248,1.2759921905468086);
\draw [color=qqwuqq](0.513467927240458,1.1089158875870346) node[anchor=north west] {$\alpha_i$};
\begin{scriptsize}
\draw (0.3,0) node[anchor=north west] {$\sigma_0(1)$};
\draw (1.3,0.35) node[anchor=north west] {$\sigma_0(2)$};
\draw (-1,0.35) node[anchor=north west] {$\sigma_0(3)$};
\draw (1.95,1.1) node[anchor=north west] {$\sigma_0(4)$};
\draw (-1.55,1.1) node[anchor=north west] {$\sigma_0(5)$};
\draw (1.95,2.25) node[anchor=north west] {$\sigma_0(5)$};
\draw (-1.55,2.25) node[anchor=north west] {$\sigma_0(4)$};
\draw (1.3,3.05) node[anchor=north west] {$\sigma_0(3)$};
\draw (-1,3.05) node[anchor=north west] {$\sigma_0(2)$};
\draw (0.2,3.4) node[anchor=north west] {$\sigma_0(1)$};
\draw (0.55,0.25) node[anchor=north west] {$\Sigma_0$};
\draw [fill=uuuuuu] (0,0) circle (2pt);
\draw [color=black] (1,0)-- ++(-2pt,-2pt) -- ++(4pt,4pt) ++(-4pt,0) -- ++(4pt,-4pt);
\draw [fill=uuuuuu] (1.809016994374947,0.5877852522924729) circle (2pt);
\draw [color=uuuuuu] (2.118033988749895,1.5388417685876261)-- ++(-2pt,-2pt) -- ++(4pt,4pt) ++(-4pt,0) -- ++(4pt,-4pt);
\draw [fill=uuuuuu] (1.8090169943749475,2.4898982848827793) circle (2pt);
\draw [color=uuuuuu] (1,3.0776835371752527)-- ++(-2pt,-2pt) -- ++(4pt,4pt) ++(-4pt,0) -- ++(4pt,-4pt);
\draw [fill=uuuuuu] (0,3.077683537175253) circle (2pt);
\draw [color=uuuuuu] (-0.809016994374947,2.4898982848827798)-- ++(-2pt,-2pt) -- ++(4pt,4pt) ++(-4pt,0) -- ++(4pt,-4pt);
\draw [fill=uuuuuu] (-1.1180339887498945,1.5388417685876268) circle (2pt);
\draw [color=uuuuuu] (-0.8090169943749475,0.5877852522924734)-- ++(-2pt,-2pt) -- ++(4pt,4pt) ++(-4pt,0) -- ++(4pt,-4pt);
\draw [fill=qqwuqq,shift={(-0.3236683994496628,0.23515885717160706)}] (0,0) ++(0 pt,3.75pt) -- ++(3.2475952641916446pt,-5.625pt)--++(-6.495190528383289pt,0 pt) -- ++(3.2475952641916446pt,5.625pt);
\draw [fill=qqwuqq] (2.0326289836979248,1.2759921905468086) ++(-2.5pt,0 pt) -- ++(2.5pt,2.5pt)--++(2.5pt,-2.5pt)--++(-2.5pt,-2.5pt)--++(-2.5pt,2.5pt);
\end{scriptsize}
\end{tikzpicture}
\definecolor{qqwuqq}{rgb}{0,0.39215686274509803,0}
\definecolor{zzttqq}{rgb}{0.6,0.2,0}
\definecolor{uuuuuu}{rgb}{0.26666666666666666,0.26666666666666666,0.26666666666666666}
\begin{tikzpicture}[line cap=round,line join=round,>=triangle 45,x=1cm,y=1cm,scale=1.5]
\clip(-1.8,-0.35) rectangle (2.8,3.35);
\fill[line width=1pt,color=zzttqq,fill=zzttqq,fill opacity=0.10000000149011612] (1.8090169943749475,2.4898982848827793) -- (-0.809016994374947,2.4898982848827798) -- (0,3.077683537175253) -- (1,3.0776835371752527) -- cycle;
\fill[line width=1pt,color=zzttqq,fill=zzttqq,fill opacity=0.10000000149011612] (-0.8090169943749475,0.5877852522924734) -- (1.809016994374947,0.5877852522924729) -- (1,0) -- (0,0) -- cycle;
\draw [line width=1pt] (0,0)-- (1,0);
\draw [line width=1pt] (1,0)-- (1.809016994374947,0.5877852522924729);
\draw [line width=1pt] (1.809016994374947,0.5877852522924729)-- (2.118033988749895,1.5388417685876261);
\draw [line width=1pt] (2.118033988749895,1.5388417685876261)-- (1.8090169943749475,2.4898982848827793);
\draw [line width=1pt] (1.8090169943749475,2.4898982848827793)-- (1,3.0776835371752527);
\draw [line width=1pt] (1,3.0776835371752527)-- (0,3.077683537175253);
\draw [line width=1pt] (0,3.077683537175253)-- (-0.809016994374947,2.4898982848827798);
\draw [line width=1pt] (-0.809016994374947,2.4898982848827798)-- (-1.1180339887498945,1.5388417685876268);
\draw [line width=1pt] (-1.1180339887498945,1.5388417685876268)-- (-0.8090169943749475,0.5877852522924734);
\draw [line width=1pt] (-0.8090169943749475,0.5877852522924734)-- (0,0);
\draw [line width=1pt,dash pattern=on 3pt off 3pt] (-0.809016994374947,2.4898982848827798)-- (1.8090169943749475,2.4898982848827793);
\draw [line width=1pt,dash pattern=on 3pt off 3pt] (-0.8090169943749475,0.5877852522924734)-- (1.809016994374947,0.5877852522924729);
\draw [line width=1pt,color=qqwuqq] (-0.988364539638482,1.9379232973924037)-- (1.7059850249516277,2.56475539241297);
\draw [line width=1pt,color=qqwuqq] (-0.10303196942331971,0.0748571075301907)-- (1.6711419803071608,0.4876131910228888);
\draw [line width=1pt,color=qqwuqq] (-0.13787501406778624,2.9775114759056684)-- (0.29270049699781087,3.077683537175253);
\draw [line width=1pt,color=qqwuqq] (0.2927004969978105,0)-- (1.3331696731874536,0.24206193661233252);
\draw [line width=1pt,color=qqwuqq] (-0.4758473211874934,2.7319602214951124)-- (1,3.0776835371752527);
\draw [color=qqwuqq](0.05702767380216179,2.23) node[anchor=north west] {$\alpha_{i_0}$};
\draw [color=qqwuqq](0,0.55) node[anchor=north west] {$\alpha_{i_0+1}$};
\draw [color=qqwuqq](0,2.9) node[anchor=north west] {$\alpha_{i_0+p-1}$};
\draw (1,3.4) node[anchor=north west] {$A'$};
\begin{scriptsize}
\draw (0.3,0) node[anchor=north west] {$\sigma_0(1)$};
\draw (1.45,0.35) node[anchor=north west] {$\sigma_0(2)$};
\draw (-1,0.35) node[anchor=north west] {$\sigma_0(3)$};
\draw (1.95,1.1) node[anchor=north west] {$\sigma_0(4)$};
\draw (-1.55,1.1) node[anchor=north west] {$\sigma_0(5)$};
\draw (1.95,2.25) node[anchor=north west] {$\sigma_0(5)$};
\draw (-1.55,2.25) node[anchor=north west] {$\sigma_0(4)$};
\draw (1.3,3.05) node[anchor=north west] {$\sigma_0(3)$};
\draw (-1,3.05) node[anchor=north west] {$\sigma_0(2)$};
\draw (0.2,3.4) node[anchor=north west] {$\sigma_0(1)$};
\draw [fill=uuuuuu] (0,0) circle (2pt);
\draw [color=black] (1,0)-- ++(-2pt,-2pt) -- ++(4pt,4pt) ++(-4pt,0) -- ++(4pt,-4pt);
\draw [fill=uuuuuu] (1.809016994374947,0.5877852522924729) circle (2pt);
\draw [color=uuuuuu] (2.118033988749895,1.5388417685876261)-- ++(-2pt,-2pt) -- ++(4pt,4pt) ++(-4pt,0) -- ++(4pt,-4pt);
\draw [fill=uuuuuu] (1.8090169943749475,2.4898982848827793) circle (2pt);
\draw [color=uuuuuu] (1,3.0776835371752527)-- ++(-2pt,-2pt) -- ++(4pt,4pt) ++(-4pt,0) -- ++(4pt,-4pt);
\draw [fill=uuuuuu] (0,3.077683537175253) circle (2pt);
\draw [color=uuuuuu] (-0.809016994374947,2.4898982848827798)-- ++(-2pt,-2pt) -- ++(4pt,4pt) ++(-4pt,0) -- ++(4pt,-4pt);
\draw [fill=uuuuuu] (-1.1180339887498945,1.5388417685876268) circle (2pt);
\draw [color=uuuuuu] (-0.8090169943749475,0.5877852522924734)-- ++(-2pt,-2pt) -- ++(4pt,4pt) ++(-4pt,0) -- ++(4pt,-4pt);
\draw [fill=qqwuqq] (-0.988364539638482,1.9379232973924037) circle (1pt);
\draw [fill=qqwuqq] (1.7059850249516277,2.56475539241297) circle (1pt);
\draw [fill=qqwuqq] (-0.10303196942331971,0.0748571075301907) circle (1pt);
\draw [fill=qqwuqq] (1.6711419803071608,0.4876131910228888) circle (1pt);
\draw [fill=qqwuqq] (-0.13787501406778624,2.9775114759056684) circle (1pt);
\draw [fill=qqwuqq] (0.29270049699781087,3.077683537175253) circle (1pt);
\draw [fill=qqwuqq] (0.2927004969978105,0) circle (1pt);
\draw [fill=qqwuqq] (1.3331696731874536,0.24206193661233252) circle (1pt);
\draw [fill=qqwuqq] (-0.4758473211874934,2.7319602214951124) circle (1pt);
\end{scriptsize}
\end{tikzpicture}
\caption{On the left, a segment whose endpoints on sides of label $\sigma_0(i)$ for $i \geq 3$. This segment is longer than the short dashed diagonals. On the right, a maximal trip through the short horizontal cylinder. Here, the first segment $\alpha_{i_0}$ is a long segment whereas the last segment $\alpha_{i_0+p-1}$ ($p=5$) is the terminal segment and goes to the vertex $A'$.}
\label{fig:short_long_segments}
\end{figure}

In particular, the segments $\alpha_{i_0}$ and $\alpha_{i_0 + p}$ may be long non-adjacent segments. However, we will count them with the sequence of short segments as this will help us to obtain suitable length estimates for a maximal trip through the short cylinder. Let us point out that maximal trips through small cylinders cannot overlap, since the first (resp. last) segment is either initial (resp. terminal) or has an endpoint in the interior of a side of label $\sigma_\alpha(4)$, which means that the preceding (resp. following) segment is also a long segment. With this definition, we have:

\begin{Lem}\label{lem:length_consecutive_short}
A maximal trip through the short horizontal cylinder made of $p$ segments, and among them $q$ sandwiched segments, has length at least
\begin{equation}\label{eq:second_shortest}
\sqrt{\left(p+(p-1)\cos\left(\frac{2\pi}{n}\right)\right)^2 + \left((q+1) \sin\left(\frac{2\pi}{n}\right)\right)^2}
\end{equation}
Further, unless the saddle connection $\alpha$ is made of a single maximal trip through the short horizontal cylinder and does not contain any long segments (that is, the endpoints of $\alpha$ are the points $A$ and $A'$ of Figure \ref{fig:second_shortest_cylinder}), we can slightly improve the estimate to:
\begin{equation}\label{eq:second_shortest_2}
l(\alpha) \geq \sqrt{\left(p+p\cos\left(\frac{2\pi}{n}\right)\right)^2 + \left((q+1) \sin\left(\frac{2\pi}{n}\right)\right)^2}
\end{equation}
\end{Lem}

\begin{proof}
Let $\alpha$ be a saddle connection whose direction lies in $\Sigma_0$, and let $\alpha_{i_0} \cup \cdots \cup \alpha_{i_0+p-1}$ be a maximal trip through the short horizontal cylinder made of $p$ segments, and among them $q$ sandwiched segments. In order to estimate its length, we proceed similarly to case (3) by unfolding the trajectory, as in Figure~\ref{fig:ex_unfolding_short_cylinder}. Now, we provide a lower bound on the length of $\alpha_{i_0} \cup \cdots \cup \alpha_{i_0+p-1}$ using a segment-by-segment estimate. More precisely, the horizontal (resp. vertical) length of each segment $\alpha_j$ is counted from the next singularity on the right (resp. above) the left endpoint $\alpha_j^-$ of the segment $\alpha_j$ up to the next singularity on the right (resp. above) the right endpoint $\alpha_j^+$ of $\alpha_j$, except for the last segment which we will count up to the last singularity (unless it ends to a singularity). The reason for this count is that then we only have to estimate distances between singularities. In particular, we obtain the following estimates:

\begin{figure}[h]
\center
\definecolor{qqwuqq}{rgb}{0,0.39215686274509803,0}
\definecolor{zzttqq}{rgb}{0.6,0.2,0}
\definecolor{uuuuuu}{rgb}{0.26666666666666666,0.26666666666666666,0.26666666666666666}
\begin{tikzpicture}[line cap=round,line join=round,>=triangle 45,x=1cm,y=1cm]
\clip(-2.15,-0.75) rectangle (13,2);
\fill[line width=1pt,color=zzttqq,fill=zzttqq,fill opacity=0.10000000149011612] (-0.8090169943749475,0.5877852522924734) -- (1.809016994374947,0.5877852522924729) -- (1,0) -- (0,0) -- cycle;
\fill[line width=1pt,color=zzttqq,fill=zzttqq,fill opacity=0.10000000149011612] (1,0) -- (1.809016994374947,0.5877852522924729) -- (2.8090169943749466,0.5877852522924729) -- (3.618033988749893,0) -- cycle;
\fill[line width=1pt,color=zzttqq,fill=zzttqq,fill opacity=0.10000000149011612] (2.8090169943749466,0.5877852522924729) -- (3.618033988749893,0) -- (4.618033988749891,0) -- (5.427050983124839,0.5877852522924714) -- cycle;
\fill[line width=1pt,color=zzttqq,fill=zzttqq,fill opacity=0.10000000149011612] (4.618033988749893,1.1755705045849454) -- (5.427050983124839,0.5877852522924714) -- (6.427050983124838,0.5877852522924699) -- (7.236067977499785,1.1755705045849414) -- cycle;
\fill[line width=1pt,color=zzttqq,fill=zzttqq,fill opacity=0.10000000149011612] (4.618033988749891,0) -- (5.427050983124839,0.5877852522924714) -- (6.427050983124838,0.5877852522924699) -- (7.236067977499784,0) -- cycle;
\fill[line width=1pt,color=zzttqq,fill=zzttqq,fill opacity=0.10000000149011612] (6.427050983124838,0.5877852522924699) -- (7.236067977499785,1.1755705045849414) -- (8.236067977499784,1.1755705045849396) -- (9.04508497187473,0.5877852522924654) -- cycle;
\fill[line width=1pt,color=zzttqq,fill=zzttqq,fill opacity=0.10000000149011612] (8.236067977499784,1.1755705045849396) -- (9.04508497187473,0.5877852522924654) -- (10.045084971874727,0.5877852522924623) -- (10.854101966249676,1.175570504584932) -- cycle;
\fill[line width=1pt,color=zzttqq,fill=zzttqq,fill opacity=0.1] (10.854101966249676,1.175570504584932) -- (11.854101966249674,1.1755705045849278) -- (12.663118960624619,0.5877852522924523) -- (10.045084971874727,0.5877852522924623) -- cycle;
\draw [line width=1pt] (-0.8090169943749475,0.5877852522924734)-- (0,0);
\draw [line width=1pt] (0,0)-- (1,0);
\draw [line width=1pt] (1,0)-- (1.809016994374947,0.5877852522924729);
\draw [line width=1pt,to-to] (1.76,-0.3)-- (3.56,-0.3);
\draw [line width=1pt,color=qqwuqq] (0,0)-- (11.854101966249674,1.1755705045849278);
\draw [line width=1pt] (1,0)-- (1.809016994374947,0.5877852522924729);
\draw [line width=1pt] (1.809016994374947,0.5877852522924729)-- (2.8090169943749466,0.5877852522924729);
\draw [line width=1pt] (2.8090169943749466,0.5877852522924729)-- (3.618033988749893,0);
\draw [line width=1pt] (2.8090169943749466,0.5877852522924729)-- (3.618033988749893,0);
\draw [line width=1pt] (3.618033988749893,0)-- (4.618033988749891,0);
\draw [line width=1pt] (4.618033988749891,0)-- (5.427050983124839,0.5877852522924714);
\draw [line width=1pt,dash pattern=on 3pt off 3pt] (4.618033988749893,1.1755705045849454)-- (5.427050983124839,0.5877852522924714);
\draw [line width=1pt] (5.427050983124839,0.5877852522924714)-- (6.427050983124838,0.5877852522924699);
\draw [line width=1pt] (6.427050983124838,0.5877852522924699)-- (7.236067977499785,1.1755705045849414);
\draw [line width=1pt] (4.618033988749891,0)-- (5.427050983124839,0.5877852522924714);
\draw [line width=1pt] (5.427050983124839,0.5877852522924714)-- (6.427050983124838,0.5877852522924699);
\draw [line width=1pt,dash pattern=on 3pt off 3pt] (6.427050983124838,0.5877852522924699)-- (7.236067977499784,0);
\draw [line width=1pt] (6.427050983124838,0.5877852522924699)-- (7.236067977499785,1.1755705045849414);
\draw [line width=1pt] (7.236067977499785,1.1755705045849414)-- (8.236067977499784,1.1755705045849396);
\draw [line width=1pt] (8.236067977499784,1.1755705045849396)-- (9.04508497187473,0.5877852522924654);
\draw [line width=1pt] (8.236067977499784,1.1755705045849396)-- (9.04508497187473,0.5877852522924654);
\draw [line width=1pt] (9.04508497187473,0.5877852522924654)-- (10.045084971874727,0.5877852522924623);
\draw [line width=1pt] (10.045084971874727,0.5877852522924623)-- (10.854101966249676,1.175570504584932);
\draw [line width=1pt] (10.854101966249676,1.175570504584932)-- (11.854101966249674,1.1755705045849278);
\draw [line width=1pt] (11.854101966249674,1.1755705045849278)-- (12.663118960624619,0.5877852522924523);
\draw [line width=1pt] (10.045084971874727,0.5877852522924623)-- (10.854101966249676,1.175570504584932);
\draw [line width=1pt,to-to] (3.59,-0.3)-- (5.39,-0.3);
\draw [line width=1pt,to-to] (0,-0.3)-- (1.73,-0.3);
\draw [line width=1pt,to-to] (5.43,1.4)-- (7.25,1.4);
\draw [line width=1pt,to-to] (7.25,1.4)-- (9.05,1.4);
\draw [line width=1pt,to-to] (9.05,1.4)-- (10.85,1.4);
\draw [line width=1pt,to-to] (4.35,1.18)-- (4.35,0.6);
\draw [line width=1pt,to-to] (10.85,1.4)-- (11.85,1.4);
\draw [line width=1pt,to-to] (-1,0.6)-- (-1,0);
\draw (11.1,2.05) node[anchor=north west] {$1$};
\begin{scriptsize}
\draw (0,-0.25) node[anchor=north west] {$1+\cos\left(\frac{\pi}{5}\right)$};
\draw (1.8,-0.25) node[anchor=north west] {$1+\cos\left(\frac{\pi}{5}\right)$};
\draw (3.7,-0.25) node[anchor=north west] {$1+\cos\left(\frac{\pi}{5}\right)$};
\draw (5.5,2.05) node[anchor=north west] {$1+\cos\left(\frac{\pi}{5}\right)$};
\draw (7.3,2.05) node[anchor=north west] {$1+\cos\left(\frac{\pi}{5}\right)$};
\draw (9.1,2.05) node[anchor=north west] {$1+\cos\left(\frac{\pi}{5}\right)$};
\draw (-2.2,0.55) node[anchor=north west] {$\sin \left(\frac{\pi}{5}\right)$};
\draw (3.2,1.15) node[anchor=north west] {$\sin \left(\frac{\pi}{5}\right)$};
\draw [color=black] (0,0)-- ++(-2.5pt,-2.5pt) -- ++(5pt,5pt) ++(-5pt,0) -- ++(5pt,-5pt);
\draw [fill=black] (1,0) circle (2.5pt);
\draw [color=uuuuuu] (1.809016994374947,0.5877852522924729)-- ++(-2.5pt,-2.5pt) -- ++(5pt,5pt) ++(-5pt,0) -- ++(5pt,-5pt);
\draw [fill=black] (-0.8090169943749475,0.5877852522924734) circle (2.5pt);
\draw [color=uuuuuu] (3.618033988749893,0)-- ++(-2.5pt,-2.5pt) -- ++(5pt,5pt) ++(-5pt,0) -- ++(5pt,-5pt);
\draw [fill=uuuuuu] (2.8090169943749466,0.5877852522924729) circle (2.5pt);
\draw [fill=uuuuuu] (4.618033988749891,0) circle (2.5pt);
\draw [color=uuuuuu] (5.427050983124839,0.5877852522924714)-- ++(-2.5pt,-2.5pt) -- ++(5pt,5pt) ++(-5pt,0) -- ++(5pt,-5pt);
\draw [color=uuuuuu] (7.236067977499784,0)-- ++(-2.5pt,-2.5pt) -- ++(5pt,5pt) ++(-5pt,0) -- ++(5pt,-5pt);
\draw [fill=uuuuuu] (6.427050983124838,0.5877852522924699) circle (2.5pt);
\draw [color=uuuuuu] (7.236067977499785,1.1755705045849414)-- ++(-2.5pt,-2.5pt) -- ++(5pt,5pt) ++(-5pt,0) -- ++(5pt,-5pt);
\draw [fill=uuuuuu] (4.618033988749893,1.1755705045849454) circle (2.5pt);
\draw [color=uuuuuu] (9.04508497187473,0.5877852522924654)-- ++(-2.5pt,-2.5pt) -- ++(5pt,5pt) ++(-5pt,0) -- ++(5pt,-5pt);
\draw [fill=uuuuuu] (8.236067977499784,1.1755705045849396) circle (2.5pt);
\draw [fill=uuuuuu] (10.045084971874727,0.5877852522924623) circle (2.5pt);
\draw [color=uuuuuu] (10.854101966249676,1.175570504584932)-- ++(-2.5pt,-2.5pt) -- ++(5pt,5pt) ++(-5pt,0) -- ++(5pt,-5pt);
\draw [color=uuuuuu] (12.663118960624619,0.5877852522924523)-- ++(-2.5pt,-2.5pt) -- ++(5pt,5pt) ++(-5pt,0) -- ++(5pt,-5pt);
\draw [fill=uuuuuu] (11.854101966249674,1.1755705045849278) circle (2.5pt);
\end{scriptsize}
\end{tikzpicture}
\caption{Example of unfolding of a maximal trip in the shortest horizontal cylinder (here the saddle connection $\alpha$ starts at $A$ and ends at $A'$, and is completely contained in the short horizontal cylinder). Each segment (sandwiched or non-sandwiched) accounts for a horizontal length at least $1 + \cos(\frac{\pi}{5})$, except the last one which may account for only $1$. Further, the vertical length is at least $\sin \left(\frac{\pi}{5}\right)$, and each sandwiched segment accounts for an additional vertical length of $\sin(\frac{\pi}{5})$.}
\label{fig:ex_unfolding_short_cylinder}
\end{figure}
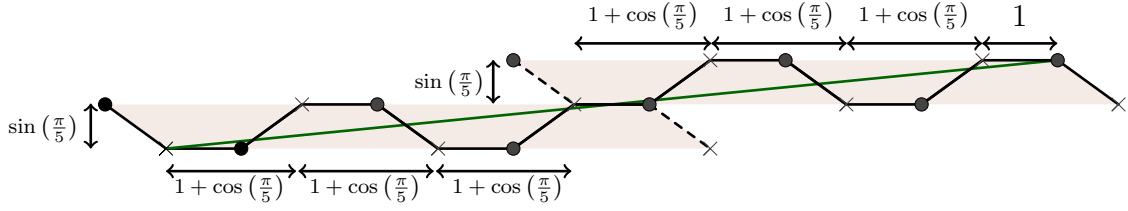

\begin{itemize}
\item Since $\theta_\alpha \in \Sigma_0$, the initial endpoint $\alpha_{i_0}^-$ of the first segment of the sequence $\alpha_{i_0}$ is either the singularity $A$ or it lies on the segment $CB$ (possibly on $B$), see Figure~\ref{fig:second_shortest_cylinder} and the top-left of Figure \ref{fig:cases_additional_length}. In particular, if $\alpha_{i_0}^- = A$ then the segment $\alpha_{i_0}$ accounts for a horizontal length $1+\cos\left(\frac{2\pi}{n}\right)$, and otherwise it accounts for a horizontal length of at least $1 + 2\cos\left(\frac{2\pi}{n}\right)$. Further, this segment also accounts for a vertical length of $\sin\left(\frac{2\pi}{n}\right)$.

\begin{figure}[h]
\center
\definecolor{ccqqqq}{rgb}{0.8,0,0}
\definecolor{zzttqq}{rgb}{0.6,0.2,0}
\definecolor{uuuuuu}{rgb}{0.26666666666666666,0.26666666666666666,0.26666666666666666}
\definecolor{qqwuqq}{rgb}{0,0.39215686274509803,0}
\begin{tikzpicture}[line cap=round,line join=round,>=triangle 45,x=1cm,y=1cm,scale=1.5]
\clip(-1.7845282725827565,-0.4232152711615752) rectangle (5.406398970426983,3.294928211444511);
\fill[line width=1pt,color=zzttqq,fill=zzttqq,fill opacity=0.10000000149011612] (-0.8090169943749475,0.5877852522924734) -- (1.809016994374947,0.5877852522924729) -- (1,0) -- (0,0) -- cycle;
\fill[line width=1pt,color=zzttqq,fill=zzttqq,fill opacity=0.10000000149011612] (-0.809016994374947,2.4898982848827798) -- (0,3.077683537175253) -- (1,3.0776835371752527) -- (1.8090169943749475,2.4898982848827793) -- cycle;
\draw [shift={(4,1)},line width=1pt,color=qqwuqq,fill=qqwuqq,fill opacity=0.10000000149011612] (0,0) -- (0:0.4718456196200616) arc (0:18:0.4718456196200616) -- cycle;
\draw [line width=1pt] (0,0)-- (1,0);
\draw [line width=1pt] (1,0)-- (1.809016994374947,0.5877852522924729);
\draw [line width=1pt] (1.809016994374947,0.5877852522924729)-- (2.118033988749895,1.5388417685876261);
\draw [line width=1pt] (2.118033988749895,1.5388417685876261)-- (1.8090169943749475,2.4898982848827793);
\draw [line width=1pt] (1.8090169943749475,2.4898982848827793)-- (1,3.0776835371752527);
\draw [line width=1pt] (1,3.0776835371752527)-- (0,3.077683537175253);
\draw [line width=1pt] (0,3.077683537175253)-- (-0.809016994374947,2.4898982848827798);
\draw [line width=1pt] (-0.809016994374947,2.4898982848827798)-- (-1.1180339887498945,1.5388417685876268);
\draw [line width=1pt] (-1.1180339887498945,1.5388417685876268)-- (-0.8090169943749475,0.5877852522924734);
\draw [line width=1pt] (-0.8090169943749475,0.5877852522924734)-- (0,0);
\draw [line width=1pt,dash pattern=on 3pt off 3pt,color=zzttqq] (-0.8090169943749475,0.5877852522924734)-- (1.809016994374947,0.5877852522924729);
\draw [line width=1pt,dash pattern=on 3pt off 3pt,color=zzttqq] (1.8090169943749475,2.4898982848827793)-- (-0.809016994374947,2.4898982848827798);
\draw [line width=1pt,dash pattern=on 3pt off 3pt] (5,1)-- (4,1);
\draw [line width=1pt,dash pattern=on 3pt off 3pt] (4,1)-- (3,1);
\draw [line width=1pt,dash pattern=on 3pt off 3pt] (4,1)-- (4.951056516295154,1.3090169943749475);
\draw [line width=1pt,dash pattern=on 3pt off 3pt] (4,1)-- (4.8090169943749475,1.5877852522924731);
\draw [line width=1pt,dash pattern=on 3pt off 3pt] (4,1)-- (4.587785252292473,1.8090169943749475);
\draw [line width=1pt,dash pattern=on 3pt off 3pt] (4,1)-- (4.3090169943749475,1.9510565162951536);
\draw [line width=1pt,dash pattern=on 3pt off 3pt] (4,1)-- (4,2);
\draw [line width=1pt,dash pattern=on 3pt off 3pt] (4,1)-- (3.6909830056250525,1.9510565162951536);
\draw [line width=1pt,dash pattern=on 3pt off 3pt] (4,1)-- (3.4122147477075266,1.8090169943749475);
\draw [line width=1pt,dash pattern=on 3pt off 3pt] (4,1)-- (3.1909830056250525,1.5877852522924731);
\draw [line width=1pt,dash pattern=on 3pt off 3pt] (4,1)-- (3.0489434837048464,1.3090169943749475);
\draw [color=qqwuqq](4.962864087984125,1.35) node[anchor=north west] {$\Sigma_0$};
\draw (-0.4,0) node[anchor=north west] {$A$};
\draw (-1.5,1.9) node[anchor=north west] {$C$};
\draw (1.1,3.4) node[anchor=north west] {$A'$};
\draw (2.1,1.9) node[anchor=north west] {$C'$};
\draw (-1.06,2.92) node[anchor=north west] {$B$};
\draw(1.8,0.7) node[anchor=north west] {$B'$};
\draw [line width=1pt,dash pattern=on 3pt off 3pt,color=qqwuqq] (0,0)-- (1.36,0.27);
\draw [line width=1pt,dash pattern=on 3pt off 3pt,color=qqwuqq] (-0.95,2.1)-- (1.45,2.76);
\begin{scriptsize}
\draw [color=qqwuqq] (0,0)-- ++(-2.5pt,-2.5pt) -- ++(5pt,5pt) ++(-5pt,0) -- ++(5pt,-5pt);
\draw [fill=black] (1,0) circle (2.5pt);
\draw [color=uuuuuu] (1.809016994374947,0.5877852522924729)-- ++(-2.5pt,-2.5pt) -- ++(5pt,5pt) ++(-5pt,0) -- ++(5pt,-5pt);
\draw [fill=black] (2.118033988749895,1.5388417685876261) circle (2.5pt);
\draw [color=uuuuuu] (1.8090169943749475,2.4898982848827793)-- ++(-2.5pt,-2.5pt) -- ++(5pt,5pt) ++(-5pt,0) -- ++(5pt,-5pt);
\draw [fill=black] (1,3.0776835371752527) circle (2.5pt);
\draw [color=uuuuuu] (0,3.077683537175253)-- ++(-2.5pt,-2.5pt) -- ++(5pt,5pt) ++(-5pt,0) -- ++(5pt,-5pt);
\draw [fill=black] (-0.809016994374947,2.4898982848827798) circle (2.5pt);
\draw [fill=qqwuqq] (-0.94,2.1) circle (2.5pt);
\draw [color=uuuuuu] (-1.1180339887498945,1.5388417685876268)-- ++(-2.5pt,-2.5pt) -- ++(5pt,5pt) ++(-5pt,0) -- ++(5pt,-5pt);
\draw [fill=black] (-0.8090169943749475,0.5877852522924734) circle (2.5pt);
\end{scriptsize}
\end{tikzpicture}
\caption{The short horizontal cylinder, and the two possible combinatorics for the first segment $\alpha_{i_0}$ of a maximal trip through the short horizontal cylinder for a saddle connection $\alpha$ in sector $\Sigma_0$: $\alpha_{i_0}$ can either start at $A$, or on the segment $BC$ - possibly at $B$).}
\label{fig:second_shortest_cylinder}
\end{figure}
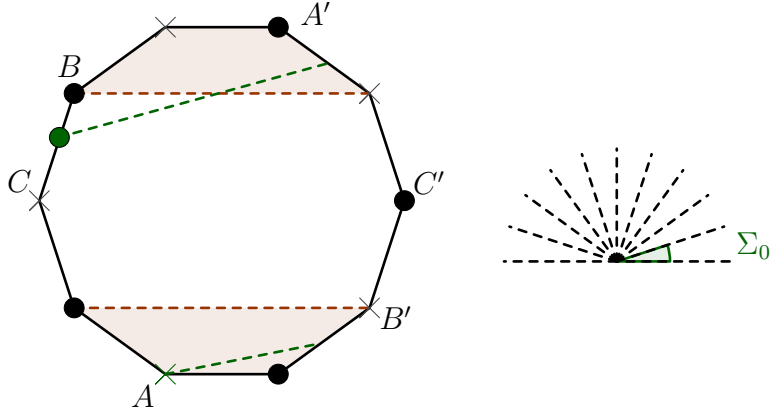

\item Next, an additional non-sandwiched segment accounts for a horizontal length $1+\cos\left(\frac{2\pi}{n}\right)$ while it does not add any vertical length, see the bottom-left of Figure~\ref{fig:cases_additional_length}.
\item An additional sandwiched segment adds a horizontal length $1+\cos\left(\frac{2\pi}{n}\right)$ but also adds a vertical length of $\sin\left(\frac{2\pi}{n}\right)$, see the top-right of Figure~\ref{fig:cases_additional_length}.
\item Finally, the right endpoint $\alpha_{i_0+p-1}^+$ of the last segment could be either $A'$ or lie on the segment $B'C'$ (possibly on $B'$). In the first case, we count the length up to $A'$, which adds a horizontal length $1$ (and no vertical length), whereas in the second case we count the length up to $B'$, and $\alpha_{i_0+p-1}$ accounts for a horizontal length $1+\cos\left(\frac{2\pi}{n}\right)$ (and no vertical length). See the bottom-right of Figure~\ref{fig:cases_additional_length}.
\end{itemize}

\begin{figure}[h]
\center
\definecolor{ccqqqq}{rgb}{0.8,0,0}
\definecolor{zzttqq}{rgb}{0.6,0.2,0}
\definecolor{uuuuuu}{rgb}{0.26666666666666666,0.26666666666666666,0.26666666666666666}
\definecolor{qqwuqq}{rgb}{0,0.39215686274509803,0}
\begin{tikzpicture}[line cap=round,line join=round,>=triangle 45,x=1cm,y=1cm, scale = 1.5]
\clip(-1.208876616646281,-0.4326521835539764) rectangle (2.8678695368710505,3.2854912990521097);
\fill[line width=1pt,color=zzttqq,fill=zzttqq,fill opacity=0.10000000149011612] (-0.8090169943749475,0.5877852522924734) -- (1.809016994374947,0.5877852522924729) -- (1,0) -- (0,0) -- cycle;
\fill[line width=1pt,color=zzttqq,fill=zzttqq,fill opacity=0.10000000149011612] (-0.809016994374947,2.4898982848827798) -- (0,3.077683537175253) -- (1,3.0776835371752527) -- (1.8090169943749475,2.4898982848827793) -- cycle;
\draw [line width=1pt,dash pattern=on 3pt off 3pt,color=zzttqq] (-0.8090169943749475,0.5877852522924734)-- (1.809016994374947,0.5877852522924729);
\draw [line width=1pt,dash pattern=on 3pt off 3pt,color=zzttqq] (1.8090169943749475,2.4898982848827793)-- (-0.809016994374947,2.4898982848827798);
\draw [color=qqwuqq](4.962864087984124,1.4075457329642642) node[anchor=north west] {$\Sigma_0$};
\draw (-0.4,0) node[anchor=north west] {$A$};
\draw (-1.05,1.85) node[anchor=north west] {$C$};
\draw (1.1,3.4) node[anchor=north west] {$A'$};
\draw (-1.06,2.92) node[anchor=north west] {$B$};
\draw [line width=1pt,dash pattern=on 3pt off 3pt] (1.809016994374947,0.5877852522924729)-- (2.118033988749895,1.5388417685876261);
\draw [line width=1pt,dash pattern=on 3pt off 3pt] (2.118033988749895,1.5388417685876261)-- (1.8090169943749475,2.4898982848827793);
\draw [line width=1pt,dash pattern=on 3pt off 3pt] (-0.809016994374947,2.4898982848827798)-- (-1.1180339887498945,1.5388417685876268);
\draw [line width=1pt,dash pattern=on 3pt off 3pt] (-1.1180339887498945,1.5388417685876268)-- (-0.8090169943749475,0.5877852522924734);
\draw [line width=1pt,dash pattern=on 3pt off 3pt,color=qqwuqq] (0,0)-- (1.3579635540868538,0.26567933348371486);
\draw [line width=1pt,dash pattern=on 3pt off 3pt,color=qqwuqq] (-0.809016994374947,2.4898982848827798)-- (1.3485266416944526,2.8325195042168505);
\draw [line width=1pt] (-0.809016994374947,2.4898982848827798)-- (0,3.077683537175253);
\draw [line width=1pt] (0,3.077683537175253)-- (1,3.0776835371752527);
\draw [line width=1pt] (1,3.0776835371752527)-- (1.8090169943749475,2.4898982848827793);
\draw [line width=1pt] (-0.8090169943749475,0.5877852522924734)-- (0,0);
\draw [line width=1pt] (0,0)-- (1,0);
\draw [line width=1pt] (1,0)-- (1.809016994374947,0.5877852522924729);
\draw [line width=1pt, to-to] (-0.0009518304189236203,0.8)-- (1.810935348922113,0.8);
\draw [line width=1pt, to-to] (-0.8502739457350345,2.2285571111031715)-- (1.8392460860993165,2.2285571111031715);
\draw [line width=1pt, to-to] (2,0.5676605300405544)-- (2,-0.045738775465525836);
\draw [line width=1pt, to-to] (2,3.0495684892420787)-- (2,2.502227570482807);
\draw (-0.2,2.256867848280375) node[anchor=north west] {$1 + 2 \cos(\frac{\pi}{5})$};
\draw (0.2,1.3) node[anchor=north west] {$1 + \cos(\frac{\pi}{5})$};
\draw (2,0.5) node[anchor=north west] {$\sin(\frac{\pi}{5})$};
\draw (2,3) node[anchor=north west] {$\sin(\frac{\pi}{5})$};
\begin{scriptsize}
\draw [fill=black] (2.118033988749895,1.5388417685876261) circle (2.5pt);
\draw [color=uuuuuu] (-1.1180339887498945,1.5388417685876268)-- ++(-2.5pt,-2.5pt) -- ++(5pt,5pt) ++(-5pt,0) -- ++(5pt,-5pt);
\draw [color=qqwuqq] (0,0)-- ++(-2.5pt,-2.5pt) -- ++(5pt,5pt) ++(-5pt,0) -- ++(5pt,-5pt);
\draw [fill=black] (1,0) circle (2.5pt);
\draw [color=uuuuuu] (1.809016994374947,0.5877852522924729)-- ++(-2.5pt,-2.5pt) -- ++(5pt,5pt) ++(-5pt,0) -- ++(5pt,-5pt);
\draw [color=uuuuuu] (1.8090169943749475,2.4898982848827793)-- ++(-2.5pt,-2.5pt) -- ++(5pt,5pt) ++(-5pt,0) -- ++(5pt,-5pt);
\draw [fill=black] (1,3.0776835371752527) circle (2.5pt);
\draw [color=uuuuuu] (0,3.077683537175253)-- ++(-2.5pt,-2.5pt) -- ++(5pt,5pt) ++(-5pt,0) -- ++(5pt,-5pt);
\draw [fill=qqwuqq] (-0.809016994374947,2.4898982848827798) circle (2.5pt);
\draw [fill=black] (-0.8090169943749475,0.5877852522924734) circle (2.5pt);
\end{scriptsize}
\end{tikzpicture}
\begin{tikzpicture}[line cap=round,line join=round,>=triangle 45,x=1cm,y=1cm, scale = 1.5]
\clip(-1.5,-2) rectangle (3,2);
\fill[line width=1pt,color=zzttqq,fill=zzttqq,fill opacity=0.10000000149011612] (-0.8090169943749475,0.5877852522924734) -- (0,0) -- (1,0) -- (1.809016994374947,0.5877852522924729) -- cycle;
\fill[line width=1pt,color=zzttqq,fill=zzttqq,fill opacity=0.10000000149011612] (0,0) -- (-0.809016994374947,-0.5877852522924729) -- (1.8090169943749475,-0.5877852522924734) -- (1,0) -- cycle;
\draw [line width=1pt] (-0.8090169943749475,0.5877852522924734)-- (0,0);
\draw [line width=1pt] (0,0)-- (1,0);
\draw [line width=1pt] (1,0)-- (1.809016994374947,0.5877852522924729);
\draw [line width=1pt,dash pattern=on 3pt off 3pt,color=zzttqq] (1.809016994374947,0.5877852522924729)-- (-0.8090169943749475,0.5877852522924734);
\draw [line width=1pt] (0,0)-- (-0.809016994374947,-0.5877852522924729);
\draw [line width=1pt,dash pattern=on 3pt off 3pt,color=zzttqq] (-0.809016994374947,-0.5877852522924729)-- (1.8090169943749475,-0.5877852522924734);
\draw [line width=1pt] (1.8090169943749475,-0.5877852522924734)-- (1,0);
\draw [line width=1pt,dash pattern=on 3pt off 3pt] (-1.1180339887498947,-1.5388417685876261)-- (-0.809016994374947,-0.5877852522924729);
\draw [line width=1pt,dash pattern=on 3pt off 3pt] (1.8090169943749475,-0.5877852522924734)-- (2.1180339887498945,-1.5388417685876268);
\draw [line width=1pt,dash pattern=on 3pt off 3pt] (-0.8090169943749475,0.5877852522924734)-- (-1.1180339887498945,1.5388417685876268);
\draw [line width=1pt,dash pattern=on 3pt off 3pt] (1.809016994374947,0.5877852522924729)-- (2.118033988749895,1.5388417685876261);
\draw [line width=1pt,color=qqwuqq] (-0.29910863284599076,-0.21731514225615348)-- (1.236363240860828,0.17172794654256607);
\draw [line width=1pt, to-to] (0,0.8)-- (1.8,0.8);
\draw [line width=1pt, to-to] (2,0.5918931279152921)-- (2,0);
\draw (0.15,1.35) node[anchor=north west] {$\left(1 + \cos\left( \frac{\pi}{5}\right)\right)$};
\draw (2.0374581975300234,0.5382606894675265) node[anchor=north west] {$ \sin \left( \frac{\pi}{5} \right)$};
\begin{scriptsize}
\draw [fill=uuuuuu] (0,0) circle (2pt);
\draw [color=black] (1,0)-- ++(-2pt,-2pt) -- ++(4pt,4pt) ++(-4pt,0) -- ++(4pt,-4pt);
\draw [fill=uuuuuu] (1.809016994374947,0.5877852522924729) circle (2pt);
\draw [color=black] (-0.8090169943749475,0.5877852522924734)-- ++(-2pt,-2pt) -- ++(4pt,4pt) ++(-4pt,0) -- ++(4pt,-4pt);
\draw [color=black] (-0.809016994374947,-0.5877852522924729)-- ++(-2pt,-2pt) -- ++(4pt,4pt) ++(-4pt,0) -- ++(4pt,-4pt);
\draw [fill=uuuuuu] (1.8090169943749475,-0.5877852522924734) circle (2pt);
\draw [fill=qqwuqq] (-0.29910863284599076,-0.21731514225615348) circle (2.5pt);
\draw [color=qqwuqq] (1.236363240860828,0.17172794654256607)-- ++(-2.5pt,-2.5pt) -- ++(5pt,5pt) ++(-5pt,0) -- ++(5pt,-5pt);
\end{scriptsize}
\end{tikzpicture}
\begin{tikzpicture}[line cap=round,line join=round,>=triangle 45,x=1cm,y=1cm, scale=1.5]
\clip(-1.2,-0.2) rectangle (2.95,3.35);
\fill[line width=1pt,color=zzttqq,fill=zzttqq,fill opacity=0.10000000149011612] (-0.8090169943749475,0.5877852522924734) -- (1.809016994374947,0.5877852522924729) -- (1,0) -- (0,0) -- cycle;
\fill[line width=1pt,color=zzttqq,fill=zzttqq,fill opacity=0.10000000149011612] (-0.809016994374947,2.4898982848827798) -- (0,3.077683537175253) -- (1,3.0776835371752527) -- (1.8090169943749475,2.4898982848827793) -- cycle;
\draw [line width=1pt,dash pattern=on 3pt off 3pt,color=zzttqq] (-0.8090169943749475,0.5877852522924734)-- (1.809016994374947,0.5877852522924729);
\draw [line width=1pt,dash pattern=on 3pt off 3pt,color=zzttqq] (1.8090169943749475,2.4898982848827793)-- (-0.809016994374947,2.4898982848827798);
\draw [line width=1pt] (-0.8090169943749475,0.5877852522924734)-- (0,0);
\draw [line width=1pt] (0,0)-- (1,0);
\draw [line width=1pt] (1,0)-- (1.809016994374947,0.5877852522924729);
\draw [line width=1pt,dash pattern=on 3pt off 3pt] (1.809016994374947,0.5877852522924729)-- (2.118033988749895,1.5388417685876261);
\draw [line width=1pt,dash pattern=on 3pt off 3pt] (2.118033988749895,1.5388417685876261)-- (1.8090169943749475,2.4898982848827793);
\draw [line width=1pt] (1.8090169943749475,2.4898982848827793)-- (1,3.0776835371752527);
\draw [line width=1pt] (1,3.0776835371752527)-- (0,3.077683537175253);
\draw [line width=1pt] (0,3.077683537175253)-- (-0.809016994374947,2.4898982848827798);
\draw [line width=1pt,dash pattern=on 3pt off 3pt] (-0.809016994374947,2.4898982848827798)-- (-1.1180339887498945,1.5388417685876268);
\draw [line width=1pt,dash pattern=on 3pt off 3pt] (-1.1180339887498945,1.5388417685876268)-- (-0.8090169943749475,0.5877852522924734);
\draw [line width=1pt,color=qqwuqq] (-0.1708162534821452,0.12412564759769638)-- (1.6104386789590934,0.4435096610031927);
\draw [line width=1pt,color=qqwuqq] (-0.49841221424334836,2.715565867050141)-- (1.3722007291069984,2.8072638785244157);
\draw [line width=1pt,to-to] (0.008485081973478146,2.2946154978499806)-- (1.8014984365297118,2.2946154978499806);
\draw [line width=1pt, to-to] (0.0009351737515659861,0.7574962032562996)-- (1.7995249624077834,0.7651335271996381);
\draw (0.3,2.3420083753595704) node[anchor=north west] {$1+\cos\left(\frac{\pi}{5}\right)$};
\draw (0.3,1.3) node[anchor=north west] {$1+\cos\left(\frac{\pi}{5}\right)$};
\begin{scriptsize}
\draw [color=black] (0,0)-- ++(-2.5pt,-2.5pt) -- ++(5pt,5pt) ++(-5pt,0) -- ++(5pt,-5pt);
\draw [fill=black] (1,0) circle (2.5pt);
\draw [color=uuuuuu] (1.809016994374947,0.5877852522924729)-- ++(-2.5pt,-2.5pt) -- ++(5pt,5pt) ++(-5pt,0) -- ++(5pt,-5pt);
\draw [fill=black] (2.118033988749895,1.5388417685876261) circle (2.5pt);
\draw [color=uuuuuu] (-1.1180339887498945,1.5388417685876268)-- ++(-2.5pt,-2.5pt) -- ++(5pt,5pt) ++(-5pt,0) -- ++(5pt,-5pt);
\draw [color=uuuuuu] (1.8090169943749475,2.4898982848827793)-- ++(-2.5pt,-2.5pt) -- ++(5pt,5pt) ++(-5pt,0) -- ++(5pt,-5pt);
\draw [fill=black] (1,3.0776835371752527) circle (2.5pt);
\draw [color=uuuuuu] (0,3.077683537175253)-- ++(-2.5pt,-2.5pt) -- ++(5pt,5pt) ++(-5pt,0) -- ++(5pt,-5pt);
\draw [fill=black] (-0.809016994374947,2.4898982848827798) circle (2.5pt);
\draw [fill=black] (-0.8090169943749475,0.5877852522924734) circle (2.5pt);
\end{scriptsize}
\end{tikzpicture}
\begin{tikzpicture}[line cap=round,line join=round,>=triangle 45,x=1cm,y=1cm, scale=1.5]
\clip(-1.5,-0.2) rectangle (2.9,3.35);
\fill[line width=1pt,color=zzttqq,fill=zzttqq,fill opacity=0.10000000149011612] (-0.8090169943749475,0.5877852522924734) -- (1.809016994374947,0.5877852522924729) -- (1,0) -- (0,0) -- cycle;
\fill[line width=1pt,color=zzttqq,fill=zzttqq,fill opacity=0.10000000149011612] (-0.809016994374947,2.4898982848827798) -- (0,3.077683537175253) -- (1,3.0776835371752527) -- (1.8090169943749475,2.4898982848827793) -- cycle;
\draw [line width=1pt,dash pattern=on 3pt off 3pt,color=zzttqq] (-0.8090169943749475,0.5877852522924734)-- (1.809016994374947,0.5877852522924729);
\draw [line width=1pt,dash pattern=on 3pt off 3pt,color=zzttqq] (1.8090169943749475,2.4898982848827793)-- (-0.809016994374947,2.4898982848827798);
\draw [line width=1pt] (-0.8090169943749475,0.5877852522924734)-- (0,0);
\draw [line width=1pt] (0,0)-- (1,0);
\draw [line width=1pt] (1,0)-- (1.809016994374947,0.5877852522924729);
\draw [line width=1pt,dash pattern=on 3pt off 3pt] (1.809016994374947,0.5877852522924729)-- (2.118033988749895,1.5388417685876261);
\draw [line width=1pt,dash pattern=on 3pt off 3pt] (2.118033988749895,1.5388417685876261)-- (1.8090169943749475,2.4898982848827793);
\draw [line width=1pt] (1.8090169943749475,2.4898982848827793)-- (1,3.0776835371752527);
\draw [line width=1pt] (1,3.0776835371752527)-- (0,3.077683537175253);
\draw [line width=1pt] (0,3.077683537175253)-- (-0.809016994374947,2.4898982848827798);
\draw [line width=1pt,dash pattern=on 3pt off 3pt] (-0.809016994374947,2.4898982848827798)-- (-1.1180339887498945,1.5388417685876268);
\draw [line width=1pt,dash pattern=on 3pt off 3pt] (-1.1180339887498945,1.5388417685876268)-- (-0.8090169943749475,0.5877852522924734);
\draw [line width=1pt,to-to] (0.008485081973478146,2.2946154978499806)-- (0.999760016134804,2.2966375325872597);
\draw [line width=1pt,to-to] (0.0009351737515659861,0.7574962032562996)-- (1.7995249624077834,0.7651335271996381);
\draw (0.3,1.3) node[anchor=north west] {$1+\cos\left(\frac{\pi}{5}\right)$};
\draw [line width=1pt,color=qqwuqq] (-0.1708162534821452,0.12412564759769638)-- (1.809016994374947,0.5877852522924729);
\draw [line width=1pt,color=qqwuqq] (-0.27226650235220895,2.8798703442651012)-- (1,3.0776835371752527);
\draw (0.3,2.2) node[anchor=north west] {$1$};
\draw (1.1,3.4) node[anchor=north west] {$A'$};
\draw (2.1,1.9) node[anchor=north west] {$C'$};
\draw(1.8,0.7) node[anchor=north west] {$B'$};
\begin{scriptsize}
\draw [color=black] (0,0)-- ++(-2.5pt,-2.5pt) -- ++(5pt,5pt) ++(-5pt,0) -- ++(5pt,-5pt);
\draw [fill=black] (1,0) circle (2.5pt);
\draw [color=uuuuuu] (1.809016994374947,0.5877852522924729)-- ++(-2.5pt,-2.5pt) -- ++(5pt,5pt) ++(-5pt,0) -- ++(5pt,-5pt);
\draw [fill=black] (2.118033988749895,1.5388417685876261) circle (2.5pt);
\draw [color=uuuuuu] (1.8090169943749475,2.4898982848827793)-- ++(-2.5pt,-2.5pt) -- ++(5pt,5pt) ++(-5pt,0) -- ++(5pt,-5pt);
\draw [fill=black] (1,3.0776835371752527) circle (2.5pt);
\draw [color=uuuuuu] (0,3.077683537175253)-- ++(-2.5pt,-2.5pt) -- ++(5pt,5pt) ++(-5pt,0) -- ++(5pt,-5pt);
\draw [fill=black] (-0.809016994374947,2.4898982848827798) circle (2.5pt);
\draw [color=uuuuuu] (-1.1180339887498945,1.5388417685876268)-- ++(-2.5pt,-2.5pt) -- ++(5pt,5pt) ++(-5pt,0) -- ++(5pt,-5pt);
\draw [fill=black] (-0.8090169943749475,0.5877852522924734) circle (2.5pt);
\end{scriptsize}
\end{tikzpicture}
\caption{Virtual length of segments intersecting the short horizontal cylinder. Top-left: the two cases for the first segment of a maximal trip through the horizontal cylinder; top-right: a sandwiched segment; bottom-left: an intermediate non-sandwiched segment; bottom-right: the last segment of a maximal trip. 
}
\label{fig:cases_additional_length}
\end{figure}
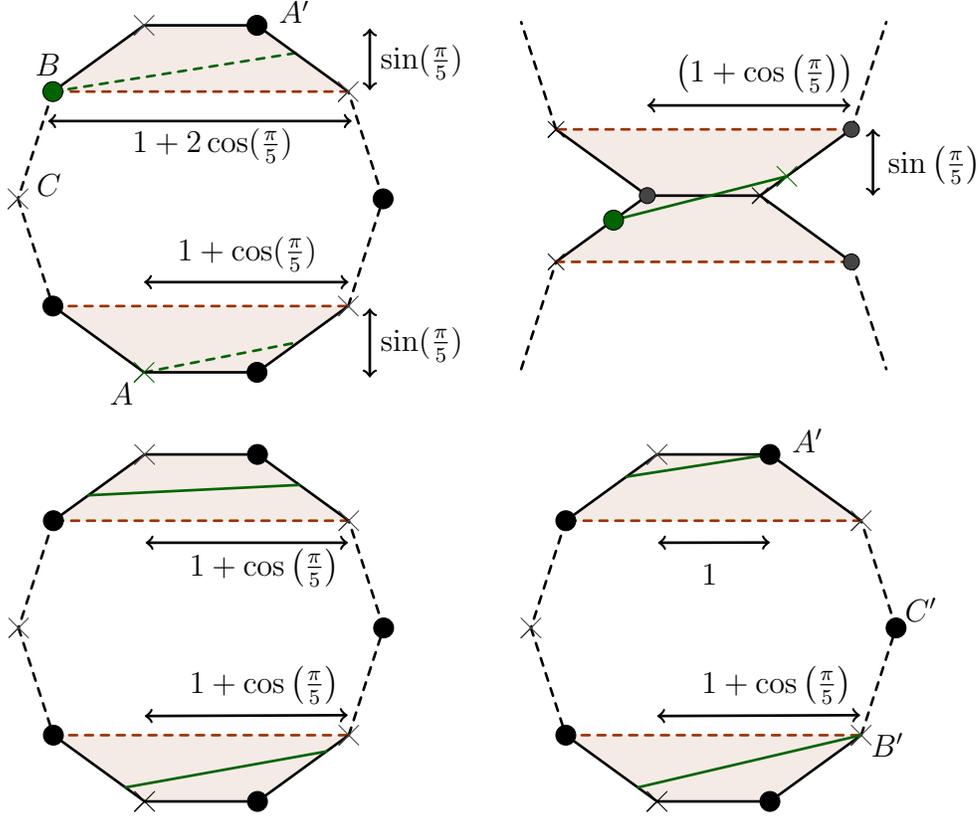

As a conclusion, adding up the "virtual" length of each segment, we obtain the following estimate on the length:
\small
\begin{equation*}
 l(\alpha_{i_0} \cup \cdots \cup \alpha_{i_0+p-1}) \geq \sqrt{\left(p+(p-1)\cos\left(\frac{2\pi}{n}\right)\right)^2 + \left((q+1) \sin\left(\frac{2\pi}{n}\right)\right)^2}
\end{equation*}
\normalsize
Which is the bound of Equation \eqref{eq:second_shortest}. In fact, we obtain slightly more information as from the above discussion this lower bound can be an equality (if and) only if $\alpha_{i_0}^- = A$ and $\alpha_{i_0+p-1} = A'$, that is $\alpha = \alpha_{i_0} \cup \cdots \cup \alpha_{i_0+p-1}$ is a saddle connection staying inside the short horizontal cylinder with endpoints on $A$ and $A'$. If this is not the case, we have a slightly better inequality as we can add $\cos\left(\frac{2\pi}{n}\right)$ to the length estimate of either the first or the last segment, giving the bound of Equation \eqref{eq:second_shortest_2}:
\small
\begin{equation*}\label{eq:case_slightly_better}
l(\alpha_{i_0} \cup \cdots \cup \alpha_{i_0+p-1}) \geq \sqrt{\left(p+p\cos\left(\frac{2\pi}{n}\right)\right)^2 + \left((q+1) \sin\left(\frac{2\pi}{n}\right)\right)^2}
\end{equation*}
\normalsize
This concludes the proof of Lemma \ref{lem:length_consecutive_short}.
\end{proof}
We can now deduce the  bound $l(\alpha) \geq \eta_0 n_\alpha + \varepsilon_0$ from Lemmas \ref{lem:long_segments} and \ref{lem:length_consecutive_short}. Let $\alpha$ be a saddle connection on the regular $n-$gon which is neither a side nor a saddle connection of type (2). Then the length of $\alpha$ is the sum of the lengths of its maximal trips through small cylinders (which cannot overlap) with the length of the remaining long segments.
Since long segments already have length at least $\eta_0 + \varepsilon_0$ it suffices to check that a maximal trip through a short cylinder made of $p$ segments has length at least $\eta_0 p+ \varepsilon_0$. 

\begin{itemize}
\item If $p \geq 4$, it suffices to consider the horizontal length to obtain the required inequality. With \eqref{eq:second_shortest}, we have:
\begin{align*}
l(\alpha) & \geq \sqrt{\left(p+(p-1)\cos\left(\frac{2\pi}{n}\right)\right)^2 + \left((q+1) \sin\left(\frac{2\pi}{n}\right)\right)^2}\\
&\geq p+(p-1)\cos\left(\frac{2\pi}{n}\right) \\
&\geq p+(p-1)\cos\left(\frac{\pi}{5}\right) \\
&\geq \eta_0 p + (1+\cos\left(\frac{\pi}{5}\right)-\eta_0)p - \cos\left(\frac{\pi}{5}\right)\\
&> \eta_0p + \varepsilon_0
\end{align*}
where the last inequality comes from the fact that $p \geq 4$ and:
\[ 4(1+\cos\left(\frac{\pi}{5}\right)-\eta_0) - \cos\left(\frac{\pi}{5}\right) \simeq 0.74 \dots > \varepsilon_0 \]
\item If $p = 3$, and $q=1$, then we have from \eqref{eq:second_shortest}:
\begin{align*}
l(\alpha) & \geq \sqrt{\left(3+2\cos\left(\frac{2\pi}{n}\right)\right)^2 + 2\sin\left(\frac{2\pi}{n}\right)^2}\\
& \geq \sqrt{13+12\cos\left(\frac{2\pi}{n}\right)}\\
& \geq \sqrt{13+12\cos\left(\frac{\pi}{5}\right)}\\
& \geq 4.765\\
& > 3\eta_0 + \varepsilon_0 \qquad (\simeq 4,743 \dots)
\end{align*}
\item If $p=3$ and $q=0$, then we should notice that the first and the last segment of the maximal sequence lie either both on top of the $n$-gon or both on the bottom part of the $n$-gon, and in particular $\alpha$ cannot both start at $A$ and end at $A'$, and therefore we can use Equation~\eqref{eq:second_shortest_2}. In particular:
\begin{align*}
l(\alpha) & \geq \sqrt{\left(3+3\cos\left(\frac{2\pi}{n}\right)\right)^2 + \sin\left(\frac{2\pi}{n}\right)^2}\\
& \geq \sqrt{10+18\cos\left(\frac{2\pi}{n}\right)+8\cos\left(\frac{2\pi}{n}\right)^2}\\
& \geq \sqrt{10+18\cos\left(\frac{\pi}{5}\right)+8\cos\left(\frac{\pi}{5}\right)^2}\\
& \geq 5.45\\
& > 3\eta_0 + \varepsilon_0
\end{align*}
\item If $p = 2$, then if $\alpha$ starts at $A$ and ends at $A'$, we have $\alpha = \Delta$. Otherwise\footnote{In fact if $\alpha \neq \Delta$ and $p=2$ then one can notice that the maximal sequence must start on $BC$ \textit{and} end on $B'C'$, because of the assumption on the slope of $\alpha$.}, we can use \eqref{eq:second_shortest_2} to obtain:
\begin{align*}
l(\alpha) & \geq \sqrt{\left(2+2\cos\left(\frac{2\pi}{n}\right)\right)^2 + \sin\left(\frac{2\pi}{n}\right)^2}\\
& \geq \sqrt{5+8\cos\left(\frac{2\pi}{n}\right)+3\cos\left(\frac{2\pi}{n}\right)^2}\\
& \geq \sqrt{5+8\cos\left(\frac{\pi}{5}\right)+3\cos\left(\frac{\pi}{5}\right)^2}\\
& \geq 3.66\\
& > 2\eta_0 + \varepsilon_0  \qquad (\simeq 3.326 \dots)
\end{align*}
\item Finally, if $p=1$, since sides are excluded that means $\alpha$ is a diagonal, and its length is least $\varphi_{10}= \eta_0 + \varepsilon_0$, with equality for short diagonals of the decagon.
\end{itemize}
This completes the proof of Proposition \ref{prop:study_lengths}.

\subsection{Study of the intersections}\label{sec:study_intersections}
We now prove Proposition \ref{prop:study_intersections} giving the number of non-singular intersection points between two saddle connections $\alpha$ and $\beta$ depending on their type. Which we recall here for convenience:
\begin{center}
\begin{tabular}{ |c|c||c|c|c|c| } 
\hline
& & \multicolumn{4}{|c|}{Type of $\alpha$} \\ 
\hline
& & (1) & (2) & (3) & (4)\\
 \hline
\multirow{4}{4em}{Type of $\beta$}& (1) & 0 & 1 & $n_{\alpha}-1$ & $n_{\alpha}-1$ \\ 

& (2) & $\star$ & $2$ & $2n_{\alpha}$ & $2n_{\alpha}-1$\\ 
& (3) & $\star$ & $\star$ & $n_{\alpha} n_{\beta}$ & $2n_{\alpha}n_{\beta} - 1$ \\ 
& (4) & $\star$ & $\star$ & $\star$ & $n_{\alpha} n_{\beta}$ \\
\hline
\end{tabular}
\end{center}
For this, let us first notice that the cases (2/2), (3/2), (3/3) directly come from the fact that two non-sandwiched segments can only intersect once. Also, cases (3/4) and (4/4) are a consequence of Proposition \ref{prop:intersection_BOU23}. Further, 
\begin{itemize}
\item[(1/1)] Distinct sides do not intersect, and by definition $|\alpha \cap \alpha| = 0$.
\item[(2/1)] The saddle connection $\Delta$ can only intersect one side on their interior.
\item[(3/1)] Since a saddle connection of type 3 does not contain sandwiched segments, the intersection of such a saddle connection with the interior of a given side is at most the number of times the saddle connection is cut into pieces in the subdivision, that is $n_\alpha - 1$.
\item[(4/1)] Similarly, intersection between a saddle connection $\alpha$ of type 4 and the interior of a side can only occur either on subdivision points if the side is not $\sigma_\alpha(1)$, giving at most $n_{\alpha}-1$ intersections, or on the interior of sandwiched segments if the considered side is $\sigma_\alpha(1)$, in which case there are at most $n_{\alpha}-2$ intersections (recall that the first and last segments of $\alpha$ are never sandwiched).
\item[(2/4)] Up to symmetry we can assume that $\alpha = \Delta$. Now, a given segment of a saddle connection $\beta$ can only intersect twice $\Delta$ on its interior (whether it be sandwiched or non-sandwiched), and in particular:
\[ |\Delta \cap \beta| \leq 2 n_{\beta} \]
However,
\begin{itemize}
\item The only segments that can intersect (in its interior) both segments of $\Delta$ are those with endpoints on the sides of label $\sigma_{\Delta}(1) = \sigma_0(1)$ and $\sigma_{\Delta}(2) = \sigma_0(2)$, and sandwiched segments.
\item Further, if the initial (resp. terminal) segment of $\beta$ does not have its singular endpoint among the common vertices $A, A'$ of sides with label $\sigma_0(1)$ and $\sigma_0(2)$, 
 and its non-singular endpoint either on $\sigma_0(2)$ or $\sigma_0(1)$, then the initial (resp. terminal) segment of $\beta$ intersects $\Delta$ at most once.
\end{itemize}
Since the only saddle connections satisfying both conditions are saddle connections of type 3, we conclude that if $\beta$ has type 4, then
\[ |\Delta \cap \beta| \leq 2 n_{\beta}-1 \]

\end{itemize}
\subsection{Conclusion: analyzing the cases}\label{sec:conclusion}
In this section, we finally prove Theorem \ref{theo:decagon}. We consider two closed curves $\gamma$ and $\delta$ which are made of either one or two saddle connections. Recall that we want to show
\[ \frac{\Int(\gamma,\delta)}{l(\gamma)l(\delta)} < \frac{1}{2} \]
or equivalently
\[ \Int(\gamma,\delta) < \frac{1}{2} l(\gamma)l(\delta) \]
with equality if and only if $\gamma$ and $\delta$ are both made of two sides, and they intersect twice (thus, at both singularities with the same sign).

\begin{Rema}\label{rema:geometric_intersection_1}
In fact we will prove the slightly stronger result that for two closed curves $\gamma = \cup_i^k \gamma_i$ and $\delta = \cup_j^l \delta_j$ made of respectively $k$ and $l$ saddle connections, with $k,l \in \{ 1,2\}$ and under the condition that it passes at most once through each singularity, we have
\[ 
\frac{\left(\sum_{i,j} |\gamma_i \cap \delta_j|\right) + s_{\gamma, \delta}}{l(\gamma) l(\delta)} \leq \frac{1}{2}
\]
where $s_{\gamma, \delta} \in \{0, 1,2\}$ is the number of singularities contained in both $\gamma$ and $\delta$.
In particular, a subdivision argument shows that the same result holds for any value of $k$ and $l$ without the assumption on the curves passing at most once through each singularity. This slightly stronger version allows to extend the result of Theorem \ref{theo:decagon} to the case where the algebraic intersection is replaced by the geometric intersection.
\end{Rema}

We subdivide the study into two main cases:
\begin{enumerate}
\item[(I)] At least one saddle connections is a side.
\item[(II)] None of the saddle connections are sides.
\end{enumerate}

\subsection*{I - One of the saddle connections is a side}
Up to a permutation, we can assume that $\gamma = \gamma_1 \cup \gamma_2$ and $\gamma_1$ is a side. This first case is actually the longer, and we will need to subdivide it into several sub-cases according to the type of $\gamma_2$.
\begin{enumerate}
\item[(a)] $\gamma_2$ is also a side,
\item[(b)] $\gamma_2$ has type (2),
\item[(c)] $\gamma_2$ has type (3),
\item[(d)] $\gamma_2$ has type (4).
\end{enumerate}

\paragraph{(a) If $\gamma_2$ is also a side,} then we can assume that $\gamma_1$ and $\gamma_2$ are not adjacent sides, as otherwise $\gamma$ would be homologous to a short diagonal, and therefore $\gamma$ would not minimize the length in its homology class. In particular, any saddle connection $\beta$ decomposed into $n_{\beta}$ segments will intersect (outside the singularities) the union of $\gamma_1 \cup \gamma_2$ at most $n_{\beta}-1$ times. This is because the intersections can only occur when $\beta$ crosses a side, and this side either subdivide two segments or is in the interior of a sandwiched segment. Furthermore, in the case of a sandwiched segment, since $\gamma_1$ and $\gamma_2$ are not adjacent, a sandwiched segment can only intersect one of these sides (or, more precisely, if it intersects one of the $\gamma_i$, $i \in \{ 1,2\}$ on its interior, its extremities do not lie on the other side $\gamma_{3-i}$). Using this argument, we easily obtain the following upper bound:

\begin{Lem}\label{lem:caseIa}
Assume $\gamma_1$ and $\gamma_2$ are two non-adjacent sides of the regular $n-$gon. For any saddle connection $\beta$ we have
\[ \frac{|\gamma_1\cap \beta|+ |\gamma_2\cap \beta|+ 1}{l(\gamma_1 \cup \gamma_2)l(\beta)} \leq \frac{1}{2}, \]
and equality can only occur if $\beta$ is a side.
\end{Lem}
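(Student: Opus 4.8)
The plan is to reduce the stated inequality to a comparison between the combinatorial complexity $n_\beta$ of the saddle connection $\beta$ and its length $l(\beta)$. Since $\gamma_1$ and $\gamma_2$ are sides, each has length $l_0 = 1$, so $l(\gamma_1 \cup \gamma_2) = 2$ and the claimed bound is equivalent to
\[ |\gamma_1 \cap \beta| + |\gamma_2 \cap \beta| + 1 \leq l(\beta). \]
I would prove this by bounding the left-hand side from above by $n_\beta$ and the length $l(\beta)$ from below by $n_\beta$.

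For the upper bound I would invoke the intersection count explained just before the statement: a side crossed by $\beta$ either separates two consecutive segments of the subdivision of $\beta$ or lies in the interior of a sandwiched segment, and the non-adjacency of $\gamma_1$ and $\gamma_2$ forbids a single sandwiched segment from meeting both sides on its interior. Hence the total number of non-singular crossings satisfies $|\gamma_1 \cap \beta| + |\gamma_2 \cap \beta| \leq n_\beta - 1$, and adding the single possible singular intersection gives $|\gamma_1 \cap \beta| + |\gamma_2 \cap \beta| + 1 \leq n_\beta$.

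It then remains to check that $n_\beta \leq l(\beta)$, with equality exactly when $\beta$ is a side, which follows directly from Proposition~\ref{prop:study_lengths}. If $\beta$ has type (1) then $n_\beta = 1 = l(\beta)$, giving equality. If $\beta$ has type (3) then $l(\beta) \geq 2\sqrt{2}\, n_\beta + (\sqrt{2}-1) > n_\beta$, and if $\beta$ has type (4) then $l(\beta) \geq \sqrt{2}\, n_\beta + \varepsilon_0 > n_\beta$, both strict since $\sqrt{2} > 1$ and $\varepsilon_0 > 0$. For the type (2) connection $\Delta$ one has $n_\beta = 2$ while $l(\beta) > 2\sqrt{2} > 2$, again strict. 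Thus $n_\beta \leq l(\beta)$ in every case; chaining the two bounds yields the lemma, and since equality forces $n_\beta = l(\beta)$, it can occur only when $\beta$ is a side.

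None of the individual estimates is difficult, so there is no single hard computation; the only point requiring care is the intersection bound $|\gamma_1 \cap \beta| + |\gamma_2 \cap \beta| \leq n_\beta - 1$, where the hypothesis that $\gamma_1$ and $\gamma_2$ are non-adjacent is \emph{essential}. Were the two sides adjacent, a single sandwiched segment (which crosses the sandwiched side $\sigma_\beta(1)$ and runs between the two copies of $\sigma_\beta(2)$) could meet both of them, and the bound would degrade to $n_\beta$, breaking the argument. I would therefore verify explicitly that under non-adjacency each sandwiched segment touches at most one of $\gamma_1, \gamma_2$ on its interior, which is precisely the place where the combinatorial hypothesis is used.
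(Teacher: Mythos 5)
Your proposal is correct and follows essentially the same route as the paper: the non-adjacency argument giving $|\gamma_1\cap\beta|+|\gamma_2\cap\beta|\leq n_\beta-1$, combined with the type-by-type length bounds of Proposition~\ref{prop:study_lengths}. The only difference is cosmetic — you factor the case analysis through the single clean inequality $n_\beta \leq l(\beta)$ (with equality exactly for sides) instead of computing the ratio separately in each of the four cases, which packages the same ingredients slightly more efficiently.
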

\begin{proof}[Proof of Lemma \ref{lem:caseIa}]
\begin{itemize}
\item If $\beta$ is a side, then $|\gamma_1 \cap \beta| = |\gamma_2 \cap \beta| = 0$ and $l(\gamma_1) = l(\gamma_2) = l(\beta) = 1$.
\item If $\beta$ has type 2, 3 or type 4, then $|\gamma_1 \cap \beta| + |\gamma_2 \cap \beta| \leq n_{\beta}-1$ by the above argument, $l(\gamma_1 \cup \gamma_2) = 2$, and $l(\beta) > n_\beta$, so that
\[ \frac{|\gamma_1\cap \beta|+ |\gamma_2\cap \beta|+ 1}{l(\gamma_1 \cup \gamma_2)l(\beta)} < \frac{n_\beta}{2 n_\beta} = \frac{1}{2}. \]
\end{itemize}
\end{proof}

As a consequence of Lemma \ref{lem:caseIa}, if the second curve $\delta$ is made of a single saddle connection (that is $\delta = \beta$, which is thus not a side since sides are not closed), then there is at most one singular intersection between $\gamma$ and $\delta$ and therefore $\Int(\gamma, \delta) \leq |\gamma_1\cap \beta|+ |\gamma_2\cap \beta|+ 1$, and Lemma \ref{lem:caseIa} directly yields
\[ \frac{\Int(\gamma,\delta)}{l(\gamma)l(\delta)} < \frac{1}{2}. \]
Now, if $\delta = \delta_1 \cup \delta_2$ is made of two (non-closed) saddle connections, then 
\begin{align*}
\Int(\gamma, \delta) &\leq |\gamma_1\cap \delta_1|+ |\gamma_2\cap \delta_1|+ |\gamma_1\cap \delta_2|+ |\gamma_2\cap \delta_2| + 2 \\
& \leq (|\gamma_1\cap \delta_1|+ |\gamma_2\cap \delta_1|+1)+ (|\gamma_1\cap \delta_2|+ |\gamma_2\cap \delta_2| + 1) \\
& \leq \frac{1}{2} l(\gamma_1 \cup \gamma_2) l(\delta_1) + \frac{1}{2} l(\gamma_1 \cup \gamma_2) l(\delta_2)\\
& \leq \frac{1}{2} l(\gamma) l(\delta)  \end{align*}
with equality if and only if $\delta = \delta_1 \cup \delta_2$ is made of two sides, and $\gamma$ and $\delta$ intersect at both singularities with the same sign.

\paragraph{(b) If $\gamma_2$ has type (2),} then we can assume that $\gamma_1 \neq \sigma_{\gamma_2}(1)$, otherwise $\gamma_1$ and $\gamma_2$ would share a common endpoint on the regular $n$-gon and we could find a shorter curve in the same homology class.\newline
Now, if $\beta$ is a side, then $|\gamma_1 \cap \beta|+|\gamma_2 \cap \beta| \leq 1$ by Proposition \ref{prop:study_intersections}  and we have:
\[ \frac{|\gamma_1\cap \beta|+ |\gamma_2\cap \beta|}{l(\gamma_1 \cup \gamma_2)l(\beta)} \leq \frac{1}{2\sqrt{2} + 1} < \frac{1}{2}. \]
Now, we show that in all the other cases, we have:
\begin{Lem}\label{lem:caseIb}
Assume $\gamma = \gamma_1 \cup \gamma_2$ where $\gamma_2$ has type (2) and $\gamma_1$ is a side which is neither $\sigma_{\gamma_2}(1)$ nor $\sigma_{\gamma_2}(2)$. Assume the saddle connection $\beta$ is not a side, and define $s_{\beta} := 1$ if $\beta$ is closed and $s_{\beta} :=2$ otherwise (the number $s_{\beta}$ is an upper bound on the number of singular intersections between $\gamma$ and $\delta$). Then, we have
\[ \frac{|\gamma_1\cap \beta|+ |\gamma_2\cap \beta| + s_{\beta}}{l(\gamma_1 \cup \gamma_2)l(\beta)} < \frac{1}{2}. \]
\end{Lem}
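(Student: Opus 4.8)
The plan is to normalise first by a dihedral symmetry so that $\gamma_2=\Delta$ (hence $\gamma_1$ is a side with $\gamma_1\neq\sigma_0(1),\sigma_0(2)$, and $l(\gamma)=1+l(\Delta)$ with $l(\Delta)=\sqrt{5+4\cos(2\pi/n)}>2\sqrt2$), and then to bound, case by case according to the type of $\beta\in\{2,3,4\}$ (recall $\beta$ is not a side), the quantity $|\gamma_1\cap\beta|+|\gamma_2\cap\beta|+s_\beta$ against $\tfrac12(1+l(\Delta))\,l(\beta)$, feeding in Proposition~\ref{prop:study_intersections} for the intersections and Proposition~\ref{prop:study_lengths} for the lengths. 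The hypothesis $\gamma_1\neq\sigma_0(1),\sigma_0(2)$ is what makes $\gamma$ simple and lets the two pieces contribute independently to the count.

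\emph{The two easy cases.} If $\beta$ has type (2) then $|\gamma_1\cap\beta|\le 1$, $|\Delta\cap\beta|\le 2$, $s_\beta=2$ and $l(\beta)=l(\Delta)$, so the numerator is at most $5$ while $\tfrac12(1+l(\Delta))l(\Delta)>\tfrac12(1+2\sqrt2)\,2\sqrt2=4+\sqrt2>5$. If $\beta$ has type (3) then $|\gamma_1\cap\beta|\le n_\beta-1$, $|\Delta\cap\beta|\le 2n_\beta$, $s_\beta=2$ and $l(\beta)\ge 2\sqrt2\,n_\beta+(\sqrt2-1)$; here $\tfrac12(1+2\sqrt2)(2\sqrt2\,n_\beta+\sqrt2-1)=(4+\sqrt2)n_\beta+\tfrac{3-\sqrt2}{2}$ already dominates the numerator $3n_\beta+1$ for every $n_\beta\ge 1$. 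In both cases the presence of the factor $2\sqrt2$ in the length bound for $\beta$ leaves ample room, so these are routine.

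\emph{The hard case, $\beta$ of type (4).} Here the crude estimates $|\gamma_1\cap\beta|\le n_\beta-1$ and $|\Delta\cap\beta|\le 2n_\beta-1$ are \emph{not} sufficient: they give a numerator $\sim 3n_\beta$, whereas $\tfrac12(1+l(\Delta))l(\beta)\sim\tfrac{\sqrt2}{2}(1+l(\Delta))n_\beta$, and since $l(\Delta)<3$ this coefficient is strictly below $3$, so the naive bound fails for $n_\beta\ge 4$. The refinement I would use is to let $d$ be the number of segments of $\beta$ meeting $\Delta$ \emph{twice}. As each segment of $\beta$ crosses each of the two sub-segments $\Delta_1,\Delta_2$ of $\Delta$ at most once, we get for free $|\Delta\cap\beta|\le n_\beta+d$. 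The geometric point is that $\Delta_1$ sits in the lower part of the polygon and $\Delta_2$ in the upper part, so a segment meeting both must span almost the entire height of the $n$-gon and is therefore a long non-sandwiched segment, of length at least $2\cos(\pi/n)\ge 2\cos(\pi/10)=\sqrt2+\varepsilon_0$ by Lemma~\ref{lem:long_segments}. Deleting these $d$ long segments only splits the maximal trips through the short cylinders, so Lemma~\ref{lem:length_consecutive_short} still controls the rest and yields $l(\beta)\ge\sqrt2\,n_\beta+\varepsilon_0 d$ (the baseline bound of Proposition~\ref{prop:study_lengths} covering $d=0$). Substituting $|\gamma_1\cap\beta|+|\Delta\cap\beta|+s_\beta\le 2n_\beta+d+1$ and this length bound, the quantity $\tfrac12(1+l(\Delta))(\sqrt2\,n_\beta+\varepsilon_0 d)-(2n_\beta+d+1)$ has $n_\beta$-coefficient $\tfrac{\sqrt2}{2}(1+l(\Delta))-2>0$ and $d$-coefficient $\tfrac{\varepsilon_0}{2}(1+l(\Delta))-1$, which is harmless since $d\le n_\beta$; one finds the difference positive for all $n_\beta\ge 2$, and the single case $n_\beta=1$ is a finite check on the short and long diagonals.

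I expect the main obstacle to be exactly this type-(4) analysis: the delicate part is making the geometric claim rigorous, namely that a segment crossing $\Delta$ twice must cross the full short cylinder and hence be long, and then doing the length bookkeeping so that each double intersection with $\Delta$ is genuinely paid for by the extra $\varepsilon_0$ of a long segment, without double-counting the single $\varepsilon_0$ already built into the baseline length estimate of Proposition~\ref{prop:study_lengths}. The type-(2) and type-(3) cases, by contrast, require nothing beyond plugging the tables into the target inequality.
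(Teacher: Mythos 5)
Your cases (2) and (3) are correct and essentially identical to the paper's treatment: one plugs the entries of Proposition~\ref{prop:study_intersections} and the lengths of Proposition~\ref{prop:study_lengths} into the target inequality, and the slack coming from the factor $2\sqrt2$ does the rest. The gap is in the type-(4) case, and it sits exactly at the step you yourself flagged as delicate: the geometric claim that a segment of $\beta$ meeting $\Delta$ twice ``must span almost the entire height of the $n$-gon and is therefore a long non-sandwiched segment'' of length at least $2\cos(\pi/n)$ is false. The paper's own proof of the $(2/4)$ entry of Proposition~\ref{prop:study_intersections} lists, besides the segments with endpoints on sides of labels $\sigma_0(1)$ and $\sigma_0(2)$, a second family of potential double-crossers: the \emph{sandwiched} segments of $\beta$, and these need not be long. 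Concretely, on the decagon take $\beta$ with direction in $\Sigma_8$, so that its sandwiched segments cross the sides of label $\sigma_0(3)$ and have their endpoints on the horizontal sides $\sigma_0(1)$. Such a sandwiched segment can start on the top side just to the left of the vertex $A'$ where $\Delta$ ends, cross the top-right side, reappear on the bottom-left side, and end on the bottom side just to the right of the vertex $A$ where $\Delta$ starts: it then meets $\Delta_2$ just before $A'$ and $\Delta_1$ just after $A$, two interior intersection points, while its length can be made arbitrarily close to $l_0=1$ (a numerical instance: the segment from $(0.988,\,3.078)$ to $(0.079,\,0)$ of slope $\simeq\tan(147^\circ)$ has length $\simeq 1.08$ and meets $\Delta$ at $(0.92,\,3.06)$ and $(0.06,\,0.01)$). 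The mechanism is that the two endpoints of $\Delta$ are the two \emph{distinct} singularities, and crossing $\Delta$ near an endpoint costs nothing in length; the identifications let one short sandwiched segment graze both singularities. Consequently your bookkeeping $l(\beta)\ge\sqrt2\,n_\beta+\varepsilon_0 d$ is unavailable: a double intersection can be carried by a segment with no length surplus, the single $\varepsilon_0$ of Proposition~\ref{prop:study_lengths} occurs only once per saddle connection, and with only the true bound $l(\beta)\ge\sqrt2\,n_\beta+\varepsilon_0$ your numerator estimate $2n_\beta+d+1$ exceeds $\tfrac12\bigl(1+l(\Delta)\bigr)l(\beta)$ as soon as $d\gtrsim 0.74\,n_\beta$, which nothing in your argument excludes.

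The paper closes the type-(4) case by a different mechanism, and this is where the hypothesis $\gamma_1\notin\{\sigma_{\gamma_2}(1),\sigma_{\gamma_2}(2)\}$ --- which your type-(4) analysis never invokes --- does its work. One proves the \emph{combined} bound $|\gamma_1\cap\beta|+|\gamma_2\cap\beta|\le 2n_\beta-1$: if $\beta$ never crosses $\gamma_1$, this is just the $(2/4)$ entry of the table; if $\beta$ does cross $\gamma_1$, then the two consecutive segments of $\beta$ sharing that crossing point have an endpoint on a side which is neither $\sigma_0(1)$ nor $\sigma_0(2)$, so by the criterion recalled above each of them can meet $\Delta$ at most once --- every crossing of $\gamma_1$ gains one intersection while forfeiting at least two potential intersections with $\Delta$. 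With this count, $\frac{2n_\beta+1}{(1+2\sqrt2)\sqrt2\,n_\beta}<\frac12$ for every $n_\beta\ge2$, and $n_\beta=1$ reduces to the finite check on diagonals you anticipated, where the closedness of the short diagonal (so $s_\beta=1$) is what saves that subcase. In short, the correct repair is not to pay for double intersections with extra length, but to trade them against intersections with $\gamma_1$ via the hypothesis on $\gamma_1$.
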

As a consequence, since we already dealt in (a) with the case where one of the curves was made of two sides, we can assume that the curve $\delta$ is not made of two sides so that at least one saddle connection of $\delta$ is in case (2), (3) or (4). This allows to compensate for the possible singular intersections (at most $s_{\beta}$) and obtain
\[ \frac{\Int(\gamma,\delta)}{l(\gamma)l(\delta)} < \frac{1}{2} \]
as required.
\begin{proof}[Proof of Lemma \ref{lem:caseIb}]
We deal with three cases according to the type of $\beta$.
\begin{itemize}
\item If $\beta$ has type 2, then we deduce from Proposition~\ref{prop:study_intersections} that $|\gamma_1 \cap \beta| + |\gamma_2 \cap \beta| \leq 3$ and hence
\[ \frac{|\gamma_1 \cap \beta| + |\gamma_2 \cap \beta| + 2}{l(\gamma_1 \cap \gamma_2) l(\beta)} \leq \frac{5}{(1 + 2\sqrt{2})2\sqrt{2}} < \frac{1}{2} \]
as required (here $s_{\beta} = 2$).
\item If $\beta$ has type 3, then we deduce from Proposition~\ref{prop:study_intersections} that $|\gamma_1 \cap \beta| + |\gamma_2 \cap \beta| \leq 3 n_{\beta} - 1$ and we deduce from Proposition \ref{prop:study_lengths} that $l(\beta) > 2\sqrt2 n_\beta$. Therefore
\[ \frac{|\gamma_1 \cap \beta| + |\gamma_2 \cap \beta| + 2}{l(\gamma_1 \cap \gamma_2) l(\beta)} \leq \frac{3 n_\beta +1}{(1 + 2\sqrt{2})2\sqrt{2} n_{\beta}} \leq \frac{3 n_\beta +1}{(2\sqrt{2}+8) n_{\beta}} < \frac{1}{2} \]
as required (here $s_{\beta} = 2$).
\item If $\beta$ has type 4, then either $\beta$ does not intersect the side $\gamma_1$ and from Proposition \ref{prop:study_intersections} we have
\[ |(\gamma_1 \cup \gamma_2) \cap \beta| \leq 2 n_{\beta} - 1 \]
or $\beta$ intersects the side $\gamma_1$. In the latter case, we obtain from the assumption $\gamma_1  \neq \sigma_{\gamma_2}(1)$ and the proof of (2/4), Proposition~\ref{prop:study_intersections} that given two consecutive segments $\beta_{j}$ and $\beta_{j+1}$ that share an endpoint on the interior of $\gamma_1$, only one of them can intersect $\gamma_2$ twice. In particular, we lose an intersection with $\gamma_2$ while gaining one with $\gamma_1$. Hence, we also have
\[ |(\gamma_1 \cup \gamma_2) \cap \beta| \leq 2 n_{\beta} - 1 \]
In particular, using that $l(\beta) \geq \eta_0 n_\beta + \varepsilon_0$, we have
\[ \frac{|\gamma_1 \cap \beta| + |\gamma_2 \cap \beta| + 2}{l(\gamma_1 \cap \gamma_2) l(\beta)} \leq \frac{2 n_{\beta} + 1}{(1 + 2\sqrt{2})(\eta_0 n_\beta + \varepsilon_0)} \]
Now since $(1 + 2\sqrt{2}) \eta_0 
> 5$ and $(1 + 2\sqrt{2})\varepsilon_0 > 1$ we have
\[
(1 + 2\sqrt{2})(\eta_0 n_\beta + \varepsilon_0) > 5 n_\beta  + 1 \geq 4 n_\beta + 2 = 2(2n_\beta +1)
\]
thus providing the required bound.
This completes the proof of Lemma~\ref{lem:caseIb}.
\end{itemize}
\end{proof}

\paragraph{(c) If $\gamma_2$ has type (3),} then for every saddle connection $\beta$ of type (2), (3) or (4), we use Propositions \ref{prop:study_lengths} and \ref{prop:study_intersections} to obtain:

\begin{Lem}\label{lem:caseIc}
Assume $\gamma = \gamma_1 \cup \gamma_2$, where $\gamma_1$ is a side and $\gamma_2$ has type (3). For any saddle connection $\beta$, we have:
\[ \frac{|\gamma_1 \cap \beta| + |\gamma_2 \cap \beta| + 1}{l(\gamma_1 \cup \gamma_2) l(\beta)} < \frac{1}{2} \]
\end{Lem}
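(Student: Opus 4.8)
The plan is to bound the numerator $|\gamma_1\cap\beta|+|\gamma_2\cap\beta|+1$ using the intersection table of Proposition~\ref{prop:study_intersections} and to bound the denominator $l(\gamma_1\cup\gamma_2)\,l(\beta)$ from below using Proposition~\ref{prop:study_lengths}, and then to verify the inequality by a short case analysis on the type of $\beta$. Since $\gamma_1$ is a side we have $l(\gamma_1)=1$, and since $\gamma_2$ has type~(3), Proposition~\ref{prop:study_lengths} gives $l(\gamma_2)\ge 2\sqrt 2\,n_{\gamma_2}+\varepsilon_1$ with $\varepsilon_1=\sqrt 2-1$. Adding these two estimates absorbs the surplus $\varepsilon_1$ into the unit length of $\gamma_1$, producing the clean lower bound
\[ l(\gamma_1\cup\gamma_2)\ \ge\ 2\sqrt 2\,n_{\gamma_2}+\sqrt 2, \]
which I will feed into the denominator in every case. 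This is exactly the ``virtually adding $\varepsilon_1$ to $\gamma_1$'' idea. Writing $a:=n_{\gamma_2}$ and $b:=n_\beta$ keeps the algebra transparent; note also that $\gamma_2$, being of type~(3), is non-closed and runs between the two distinct singularities just as the side $\gamma_1$ does, so that $\gamma=\gamma_1\cup\gamma_2$ is a genuine closed curve.

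Next I would run through the four possibilities for $\beta$, reading the relevant (symmetric) entries of the table. When $\beta$ is a side ($b=1$, $l(\beta)=1$) the numerator is $0+(a-1)+1=a$ while the denominator is $\tfrac12(2\sqrt2\,a+\sqrt2)$, and $a<\sqrt2\,a+\tfrac{\sqrt2}{2}$ holds since $a\ge 1$. When $\beta$ has type~(2) ($b=1$, $l(\beta)>2\sqrt2$) the numerator is at most $1+2a+1=2a+2$ and the denominator exceeds $\tfrac12(2\sqrt2\,a+\sqrt2)\cdot 2\sqrt2=4a+2$, so we conclude since $2a>0$. When $\beta$ has type~(3), using $|\gamma_1\cap\beta|\le b-1$, $|\gamma_2\cap\beta|\le ab$ and $l(\beta)\ge 2\sqrt2\,b+\varepsilon_1$, the inequality reduces (after pulling a factor $\sqrt2$ out of the first length factor) to
\[ b(a+1)\ <\ \tfrac12(2a+1)\bigl(4b+2-\sqrt2\bigr), \]
and the right-hand side exceeds the left by the manifestly positive remainder $3ab+b+\tfrac12(2a+1)(2-\sqrt2)$.

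The delicate case, which I expect to be the main obstacle, is $\beta$ of type~(4). Here the length estimate is only $l(\beta)\ge \sqrt2\,b+\varepsilon_0$, so the coefficient multiplying the bilinear term drops to $\sqrt2$, while the numerator is as large as possible: the table gives $|\gamma_1\cap\beta|\le b-1$ and, crucially, $|\gamma_2\cap\beta|\le 2ab-1$, so the numerator is $(b-1)+(2ab-1)+1=2ab+b-1$. Using merely $l(\beta)\ge\sqrt2\,b$, the denominator is $\tfrac12(2\sqrt2\,a+\sqrt2)\sqrt2\,b=(2a+1)b=2ab+b$, whence $2ab+b-1<2ab+b$ with a margin of exactly one intersection. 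Thus the proof is genuinely tight in this case, and it is the $-1$ furnished by Proposition~\ref{prop:study_intersections} (a type-(3) saddle connection meets a type-(4) one at most $2n_\alpha n_\beta-1$ times) that delivers the \emph{strict} inequality; the extra length $\varepsilon_0$ in $l(\beta)$ supplies additional slack but is not even required. Collecting the four cases establishes Lemma~\ref{lem:caseIc}.
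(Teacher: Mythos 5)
Your proof is correct and takes essentially the same route as the paper's: the same four-case analysis on the type of $\beta$, bounding the numerator with Proposition~\ref{prop:study_intersections} and the denominator with Proposition~\ref{prop:study_lengths} after absorbing the surplus $\varepsilon_1$ of $\gamma_2$ into the unit length of $\gamma_1$. Your computation is in fact slightly more careful than the paper's, which miswrites $1+2\sqrt{2}\,n_{\gamma_2}+\varepsilon_1$ as $2\sqrt{2}(n_{\gamma_2}+1)$ (it equals $\sqrt{2}(2n_{\gamma_2}+1)$); your verification with the correct value shows that all four cases still close, with the type-(4) case tight by exactly the $-1$ in the intersection bound $2n_{\gamma_2}n_{\beta}-1$.
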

In particular, distinguishing the cases where $\delta$ is made of one or two saddle connections as in case (a), we obtain
\[ \frac{\Int(\gamma,\delta)}{l(\gamma)l(\delta)} < \frac{1}{2} \]
as required.

\begin{proof}[Proof of Lemma~\ref{lem:caseIc}]
By Proposition \ref{prop:study_lengths}, we have $$l(\gamma_1 \cup \gamma_2) \geq 1 + 2\sqrt{2}n_{\gamma_2} + \varepsilon_1 > 2\sqrt{2} n_{\gamma_2}$$ and
\begin{enumerate}
\item If $\beta$ is a side, then $|\gamma_1 \cap \beta| = 0$ and $|\gamma_2 \cap \beta| \leq n_{\gamma_2}-1$ by Proposition \ref{prop:study_intersections}, and $l(\beta) = 1$ so that
\[ \frac{|\gamma_1 \cap \beta| + |\gamma_2 \cap \beta| + 1}{l(\gamma_1 \cup \gamma_2) l(\beta)} <\frac{n_{\gamma_2}}{2 \sqrt{2} n_{\gamma_2}} < \frac{1}{2} \]
as required.
\item If $\beta$ has type (2), then $|\gamma_1 \cap \beta| \leq 1$ and $|\gamma_2 \cap \beta| \leq 2 n_{\gamma_2}$ by Proposition \ref{prop:study_intersections}, and $l(\beta) > 2 \sqrt{2}$ by Proposition \ref{prop:study_lengths}. Hence we obtain
\[ \frac{|\gamma_1 \cap \beta| + |\gamma_2 \cap \beta| + 1}{l(\gamma_1 \cup \gamma_2) l(\beta)} < \frac{2n_{\gamma_2}+2}{2\sqrt{2} n_{\gamma_2} \cdot 2\sqrt{2} } = \frac{2n_{\gamma_2}+2}{8n_{\gamma_2}} < \frac{1}{2} \]
where the last inequality come from $n_{\gamma_2} \geq 1$.
\item If $\beta$ has type (3), then $|\gamma_1 \cap \beta| \leq n_{\beta}-1$ and $|\gamma_2 \cap \beta| \leq n_{\gamma_2} n_{\beta}$ by Proposition \ref{prop:study_intersections}, and $l(\beta) > 2 \sqrt{2} n_\beta$ as a consequence of Proposition \ref{prop:study_lengths} (we are not using the additional $\varepsilon_1$ here). Hence we obtain
\[ \frac{|\gamma_1 \cap \beta| + |\gamma_2 \cap \beta| + 1}{l(\gamma_1 \cup \gamma_2) l(\beta)} < \frac{(n_{\gamma_2} +1) n_{\beta}}{8n_{\gamma_2}n_{\beta}} \leq \frac{(n_{\gamma_2} +1) n_{\beta}}{2(2n_{\gamma_2}+2)n_{\beta}} < \frac{1}{2} \]
again using that $n_{\gamma_2} \geq 1$.
\item Finally, if $\beta$ has type (4), then $|\gamma_1 \cap \beta| \leq n_{\beta}-1$ and $|\gamma_2 \cap \beta| \leq 2 n_{\gamma_2}n_{\beta} - 1$ by Proposition \ref{prop:study_intersections}. Further, $l(\beta) > \eta_0 n_{\beta}$, so that
\[ \frac{|\gamma_1 \cap \beta| + |\gamma_2 \cap \beta| + 1}{l(\gamma_1 \cup \gamma_2) l(\beta)} < \frac{(2n_{\gamma_2}+1)n_{\beta} - 1}{ (2\sqrt{2}n_{\gamma_2} + 1 + \varepsilon_1) \eta_0n_{\beta} } \]
Notice that in this step we are really using $\eta_0$ and not $\sqrt{2}$, and we are also using the additional $1+\varepsilon_1$ length, which was not necessary in the previous cases. Indeed, the denominator satisfies
\[ (2\sqrt{2}n_{\gamma_2} + 1 + \varepsilon_1) \eta_0 n_{\beta} \geq \left( 2\sqrt{2}\eta_0 \cdot n_{\gamma_2} + (1 + \varepsilon_1)\eta_0 \right) n_{\beta}\]
and since $2\sqrt{2}\eta_0 > 4$ as well as $(1 + \varepsilon_1)\eta_0 = 2$ we obtain
\[ (2\sqrt{2}n_{\gamma_2} + 1 + \varepsilon_1) \eta_0 n_{\beta} > \left( 4 n_{\gamma_2} + 2 \right) n_{\beta} \]
and therefore 
\[ \frac{|\gamma_1 \cap \beta| + |\gamma_2 \cap \beta| + 1}{l(\gamma_1 \cup \gamma_2) l(\beta)} < \frac{1}{2} \]
as required.
\end{enumerate}
\end{proof}

\paragraph{(d) Finally, if $\gamma_2$ has type (4),} then we proceed similarly: 

\begin{itemize}
    \item First, if $\delta$ is made of two saddle connections, and one of them (say, $\delta_1$) is a side, then we can assume that $\delta_2$ has type 4; because we already dealt with the case where one of the saddle connections was made of two sides, or a side and a saddle connection of type 2 or 3. In particular, we have from our length and intersections estimates:
\begin{align*}
\Int(\gamma,\delta) & \leq 
|\gamma_1 \cap \delta_1| + |\gamma_1 \cap \delta_2| + |\gamma_2 \cap \delta_1| + |\gamma_2 \cap \delta_2| +2 \\
& \leq 0 + (n_{\delta_2}-1) + (n_{\gamma_2}-1) + n_{\gamma_2}n_{\delta_2} +2 \\
& = n_{\gamma_2}n_{\delta_2} + n_{\gamma_2} + n_{\delta_2}\\
& < (n_{\gamma_2}+1)(n_{\delta_2}+1)
\end{align*}
and, using that $1+\varepsilon_0 > \sqrt2$,  
\[ l(\gamma) = l(\gamma_1) + l(\gamma_2) \geq 1 + \sqrt{2} n_{\gamma_2} + \varepsilon_0 > \sqrt{2} (n_{\gamma_2} +1) \]
\[ l(\delta) = l(\gamma_1) + l(\gamma_2) \geq 1 + \sqrt{2} n_{\delta_2} + \varepsilon_0 > \sqrt{2} (n_{\delta_2} +1) \]
so that
\[ \frac{\Int(\gamma,\delta)}{l(\gamma)l(\delta)} < \frac{(n_{\gamma_2}+1)(n_{\delta_2}+1)}{2 (n_{\gamma_2}+1)(n_{\delta_2}+1)}= \frac{1}{2} \]
as required.

\item We can now deal with the case where no side appear in $\delta$. For this, we show that for every saddle connection $\beta$ whose type is (2), (3) or (4), we have
\end{itemize}
\begin{Lem}\label{lem:caseId}
If $\beta$ is not a side, then
\[ \frac{|\gamma_1 \cap \beta| + |\gamma_2 \cap \beta| + 1}{l(\gamma_1 \cup \gamma_2) l(\beta)} < \frac{1}{2} \]
\end{Lem}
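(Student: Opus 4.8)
The plan is to feed the estimates of Propositions \ref{prop:study_lengths} and \ref{prop:study_intersections} directly into the quotient, exactly as in cases (a)--(c). First I would record the length of the test curve: since $\gamma_1$ is a side (type (1)) and $\gamma_2$ has type (4), Proposition \ref{prop:study_lengths} gives $l(\gamma_1 \cup \gamma_2) = 1 + l(\gamma_2) \geq 1 + \sqrt{2}\,n_{\gamma_2} + \varepsilon_0$. Because $\varepsilon_0 = 2\cos(\pi/10) - \sqrt{2}$ satisfies $1 + \varepsilon_0 = 1 + 2\cos(\pi/10) - \sqrt{2} > \sqrt{2}$ (equivalently $1 + 2\cos(\pi/10) > 2\sqrt{2} \approx 2.828$, which holds since $2\cos(\pi/10) \approx 1.902$), this yields the clean strict bound
\[ l(\gamma_1 \cup \gamma_2) > \sqrt{2}\,(n_{\gamma_2} + 1). \]
This single inequality is what I would pair with the various lower bounds on $l(\beta)$.

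Next I would split into three cases according to the type of $\beta$ (which is (2), (3) or (4) since it is not a side), reading off the relevant entries of the table in Proposition \ref{prop:study_intersections} for the pairs $(1,\cdot)$ and $(4,\cdot)$. If $\beta$ has type (2): $|\gamma_1 \cap \beta| \leq 1$, $|\gamma_2 \cap \beta| \leq 2n_{\gamma_2} - 1$, and $l(\beta) > 2\sqrt{2}$, so the numerator is at most $2n_{\gamma_2} + 1$ against a denominator exceeding $4(n_{\gamma_2}+1)$, and $2(2n_{\gamma_2}+1) < 4(n_{\gamma_2}+1)$ reduces to $2 < 4$. If $\beta$ has type (3): $|\gamma_1 \cap \beta| \leq n_\beta - 1$, $|\gamma_2 \cap \beta| \leq 2n_{\gamma_2}n_\beta - 1$, and $l(\beta) > 2\sqrt{2}\,n_\beta$, so the numerator is at most $n_\beta(2n_{\gamma_2}+1) - 1$ against a denominator exceeding $4(n_{\gamma_2}+1)n_\beta$; expanding, the desired $2\bigl(n_\beta(2n_{\gamma_2}+1) - 1\bigr) < 4(n_{\gamma_2}+1)n_\beta$ collapses to $-2 < 2n_\beta$, which is automatic.

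The critical case, and the only place where the surplus lengths $\varepsilon_0$ are genuinely used, is $\beta$ of type (4). Here the table gives $|\gamma_1 \cap \beta| \leq n_\beta - 1$ and $|\gamma_2 \cap \beta| \leq n_{\gamma_2} n_\beta$, so the numerator is at most $n_{\gamma_2} n_\beta + n_\beta = (n_{\gamma_2}+1)n_\beta$, while $l(\beta) > \sqrt{2}\,n_\beta$ combines with the length bound above to give a denominator exceeding $\sqrt{2}(n_{\gamma_2}+1)\cdot\sqrt{2}\,n_\beta = 2(n_{\gamma_2}+1)n_\beta$. Dividing yields the borderline quotient $(n_{\gamma_2}+1)n_\beta \big/ \bigl(2(n_{\gamma_2}+1)n_\beta\bigr) = \tfrac12$, but the inequality is strict because both length bounds are strict (each carries an $\varepsilon_0 > 0$). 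The main subtlety to flag is precisely this last case: without the $\varepsilon_0$ surplus in Proposition \ref{prop:study_lengths} the estimate would only give $\leq \tfrac12$, so the whole point is to check that strictness survives; note also that no combinatorial constraint relating $\gamma_1$ to $\gamma_2$ is needed, since the crudest table entries already suffice. Collecting the three cases proves Lemma \ref{lem:caseId}.
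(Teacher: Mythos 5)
Your proof is correct and follows essentially the same route as the paper: the same preliminary bound $l(\gamma_1\cup\gamma_2)\geq 1+\sqrt{2}\,n_{\gamma_2}+\varepsilon_0>\sqrt{2}(n_{\gamma_2}+1)$, the same three cases on the type of $\beta$ with the same table entries from Proposition \ref{prop:study_intersections}, and the same observation that in the type-(4) case the strictness of the length estimates (via $\varepsilon_0$) is what keeps the quotient strictly below $\tfrac12$. Nothing to add.
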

As in the previous cases, using Lemma \ref{lem:caseId} and distinguishing the cases where $\delta$ is made of one or two saddle connections yields the required result:
\[ \frac{\Int(\gamma,\delta)}{l(\gamma)l(\delta)} < \frac{1}{2}. \]

\begin{proof}[Proof of Lemma \ref{lem:caseId}]
First, we have
\[ l(\gamma_1 \cup \gamma_2) = l(\gamma_1) + l(\gamma_2) \geq 1 + \sqrt{2} n_{\gamma_2} + \varepsilon_0 > \sqrt{2} (n_{\gamma_2} +1) \]
(again using that $1+\varepsilon_0 > \sqrt{2}$). Then, 
\begin{itemize}
\item If $\beta$ has type 2, then $|\gamma_1 \cap \beta| \leq 1$ and $|\gamma_2 \cap \beta|\leq 2 n_{\gamma_2}-1$, but also $l(\beta) > 2 \sqrt{2}$, so that
\[ \frac{|\gamma_1 \cap \beta| + |\gamma_2 \cap \beta| + 1}{l(\gamma_1 \cup \gamma_2) l(\beta)} < \frac{2 n_{\gamma_2} +1}{4  (n_{\gamma_2} +1)} < \frac{1}{2} \]
as required.
\item If $\beta$ has type 3, then $|\gamma_1 \cap \beta| \leq n_{\beta} - 1$ and $|\gamma_2 \cap \beta|\leq 2 n_{\gamma_2}n_{\beta}-1$, but also $l(\beta) > 2 \sqrt{2} n_{\beta}$, so that
\[ \frac{|\gamma_1 \cap \beta| + |\gamma_2 \cap \beta| + 1}{l(\gamma_1 \cup \gamma_2) l(\beta)} < \frac{(2 n_{\gamma_2} +1)n_{\beta}-1}{4  (n_{\gamma_2} +1)n_{\beta}} < \frac{1}{2} \]
as required.
\item If $\beta$ has type 4, then $|\gamma_1 \cap \beta| \leq n_{\beta} - 1$ and $|\gamma_2 \cap \beta|\leq n_{\gamma_2}n_{\beta}$, but also $l(\beta) > \sqrt{2} n_{\beta}$, so that
\[ \frac{|\gamma_1 \cap \beta| + |\gamma_2 \cap \beta| + 1}{l(\gamma_1 \cup \gamma_2) l(\beta)} < \frac{(n_{\gamma_2} +1)n_{\beta}}{2 (n_{\gamma_2} +1)n_{\beta}} = \frac{1}{2} \]
as required.
\end{itemize}
\end{proof}

This concludes the case where one of the saddle connections is a side. We now deal with the case where no side appear in neither $\gamma$ nor $\delta$.

\subsection*{(II) All saddle connections have type 2, 3 or 4}

This second case will be subdivided into two cases:
\begin{enumerate}
\item[(a)] $\gamma$ and $\delta$ are both short diagonals,
\item[(b)] $\gamma$ and $\delta$ are not both short diagonals.
\end{enumerate}

\paragraph{Case (a).} If $\gamma$ and $\delta$ are both short diagonals (which we recall are closed curves), then we have $\Int(\gamma,\delta) \leq 1$ as $\gamma$ and $\delta$ can either intersect at a singularity, or intersect on the interior of the regular $n$-gon, in which case the endpoints of $\gamma$ and the endpoints of $\delta$ do not represent the same singularity, so they do not intersect at the singularity. Further, $l(\gamma) = l(\delta) = 2 \cos \left(\frac{\pi}{n}\right)$ and hence
\[ \frac{\Int(\gamma,\delta)}{l(\gamma)l(\delta)} \leq \frac{1}{\left(2 \cos \left(\frac{\pi}{n}\right)\right)^2} < \frac{1}{2} \]
as required.\newline

\paragraph{Case (b).} Otherwise we show

\begin{Lem}\label{lem:caseIIb}
Assume $\alpha$ and $\beta$ are two saddle connections which are not sides, and at least one of them is not a short diagonal. Then:
\[ \frac{|\alpha \cap \beta| + 1}{l(\alpha) l(\beta)} < \frac{1}{2}. \]
\end{Lem}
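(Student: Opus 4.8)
The plan is to reduce the statement to a finite case analysis over the \emph{unordered} pair of types of $\alpha$ and $\beta$. Since by hypothesis neither saddle connection is a side, both have type $(2)$, $(3)$ or $(4)$, which leaves only the six pairs $\{2,2\}$, $\{2,3\}$, $\{2,4\}$, $\{3,3\}$, $\{3,4\}$, $\{4,4\}$. In each case I would combine the relevant entry of the intersection table in Proposition~\ref{prop:study_intersections} with the matching length lower bounds of Proposition~\ref{prop:study_lengths}, and feed them into the target inequality $|\alpha\cap\beta|+1 < \tfrac12 l(\alpha)l(\beta)$. Throughout I use that a type-$(3)$ length is $\geq 2\sqrt2\,n+\varepsilon_1$, a type-$(4)$ length is $\geq \sqrt2\,n+\varepsilon_0$, and a type-$(2)$ length is $>2\sqrt2$.

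For the first five pairs the leading-order estimate already suffices, and the purpose of the strictly positive constants $\varepsilon_0=2\cos(\pi/10)-\sqrt2$ and $\varepsilon_1=\sqrt2-1$ is precisely to upgrade a non-strict inequality to a strict one. In each case the product of the leading terms of $\tfrac12 l(\alpha)l(\beta)$ equals the upper bound for $|\alpha\cap\beta|+1$ read off from the table, and the positive $\varepsilon$-contribution closes the gap. For example, in the pair $\{2,4\}$ (with $\alpha$ of type $(4)$) one has $|\alpha\cap\beta|+1\le 2n_\alpha$ while $\tfrac12 l(\alpha)l(\beta) > \tfrac12(\sqrt2\,n_\alpha+\varepsilon_0)\cdot 2\sqrt2 = 2n_\alpha+\sqrt2\,\varepsilon_0$; in the pair $\{3,4\}$ the bound $|\alpha\cap\beta|+1\le 2n_\alpha n_\beta$ is beaten by $\tfrac12(\sqrt2\,n_\alpha+\varepsilon_0)(2\sqrt2\,n_\beta+\varepsilon_1)>2n_\alpha n_\beta$; and the pairs $\{2,2\}$, $\{2,3\}$, $\{3,3\}$ leave even more room. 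I would present these as short displayed computations and not dwell on them.

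The main obstacle is the borderline pair $\{4,4\}$. Here $|\alpha\cap\beta|+1\le n_\alpha n_\beta+1$, while $\tfrac12 l(\alpha)l(\beta)\ge n_\alpha n_\beta + \tfrac{\sqrt2\,\varepsilon_0}{2}(n_\alpha+n_\beta)+\tfrac{\varepsilon_0^2}{2}$, so the crude estimate gives the claim exactly when $\tfrac{\sqrt2\,\varepsilon_0}{2}(n_\alpha+n_\beta)+\tfrac{\varepsilon_0^2}{2}>1$, which holds as soon as $n_\alpha+n_\beta\ge 3$. The one remaining case is $n_\alpha=n_\beta=1$, i.e. when $\alpha$ and $\beta$ are both diagonals, and this is exactly where the hypothesis of Lemma~\ref{lem:caseIIb} is used: they are not \emph{both} short diagonals. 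To finish I would recall the metric fact that a diagonal of the unit regular $n$-gon joining vertices $k$ apart has length $\sin(k\pi/n)/\sin(\pi/n)$, so the shortest diagonal has length $2\cos(\pi/n)=\sin(2\pi/n)/\sin(\pi/n)$ and the second-shortest has length $\sin(3\pi/n)/\sin(\pi/n)$. Since $\alpha,\beta$ are not both short, at least one has length $\geq \sin(3\pi/n)/\sin(\pi/n)$, hence
\[ l(\alpha)l(\beta)\ \ge\ \frac{\sin(2\pi/n)\sin(3\pi/n)}{\sin^2(\pi/n)}\ =\ 2\cos\!\Big(\tfrac{\pi}{n}\Big)\Big(4\cos^2\!\big(\tfrac{\pi}{n}\big)-1\Big)\ >\ 4 \]
for every even $n\ge 10$: the right-hand side is increasing in $n$ and at $n=10$ already equals $\varphi_{10}(\varphi_{10}^2-1)\simeq 4.98$. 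As two distinct diagonals meet at most once, $|\alpha\cap\beta|+1\le 2<\tfrac12 l(\alpha)l(\beta)$, which settles the last case. The only point requiring mild care is verifying this monotonicity/positivity, which I would handle with the elementary identity above rather than restricting to the decagon.
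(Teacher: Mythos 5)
Your proposal is correct and follows essentially the same route as the paper: the same six type-pairs, the same combination of Propositions~\ref{prop:study_lengths} and~\ref{prop:study_intersections}, the same observation that the $\varepsilon_0$, $\varepsilon_1$ margins settle the borderline pairs, and the same reduction of the critical $(4/4)$ case to $n_\alpha=n_\beta=1$, where the hypothesis that not both are short diagonals gives $l(\alpha)l(\beta)\geq 2\cos(\pi/n)\left(4\cos^2(\pi/n)-1\right)>4$. Your expression $\sin(3\pi/n)/\sin(\pi/n)$ for the second-shortest diagonal and the monotonicity-in-$n$ check are just a rewriting of the paper's bound $\left(2\cos(\pi/n)\right)^2-1$ evaluated at $n=10$.
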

As a consequence, if $\gamma = \bigcup_{i=1}^{k} \gamma_i$ and $\delta = \bigcup_{j=1}^{l} \delta_j$ are unions of $k$ (resp. $l$) saddle connections, we have
\[
\Int(\gamma,\delta) \leq \left(\sum_{i,j} |\gamma_i \cap \delta_j|\right) + s_{\gamma, \delta} \leq \left(\sum_{i,j} |\gamma_i \cap \delta_j|+1\right) \]
as $s_{\gamma, \delta} \in \{0, 1,2\}$ (defined in Remark \ref{rema:geometric_intersection_1}) is an upper bound on the number of singular intersections. In particular, from Lemma \ref{lem:caseIIb}, we obtain:
\begin{align*}
\Int(\gamma,\delta) \leq \left(\sum_{i,j} |\gamma_i \cap \delta_j|+1\right) \leq \frac{1}{2} \left(\sum_{i,j} l(\gamma_i) l(\delta_j)\right) = \frac{1}{2} l(\gamma) l(\delta)
\end{align*}
as required.\newline

We are left to prove Lemma \ref{lem:caseIIb}.
\begin{proof}[Proof of Lemma \ref{lem:caseIIb}]
We distinguish cases according to the type of $\alpha$ and $\beta$.

\begin{itemize}
\item[(2/2)] In this case $|\alpha \cap \beta| \leq 2$ and $l(\alpha), l(\beta) \geq 2 \sqrt{2}$ from Propositions \ref{prop:study_lengths} and \ref{prop:study_intersections}, so that
\[ \frac{|\alpha \cap \beta|+1}{l(\alpha) l(\beta)} < \frac{3}{8} < \frac{1}{2}. \]
\item[(2/3)] In this case $|\alpha \cap \beta| \leq 2 n_{\beta}$ and $l(\alpha) > 2 \sqrt{2}$ and $l(\beta) > 2\sqrt{2} n_{\beta}$, so that
\[ \frac{|\alpha \cap \beta|+1}{l(\alpha) l(\beta)} < \frac{2 n_{\beta}+1}{8n_{\beta}} < \frac{1}{2}. \]
\item[(2/4)] In this case $|\alpha \cap \beta| \leq 2 n_{\beta}-1$ and $l(\alpha) > 2 \sqrt{2}$ and $l(\beta) \geq \eta_0 n_\beta + \varepsilon_0 > \sqrt{2} n_{\beta}$, so that
\[ \frac{|\alpha \cap \beta|+1}{l(\alpha) l(\beta)} < \frac{2 n_{\beta}}{4n_{\beta}} = \frac{1}{2}. \]
\item[(3/3)] In this case $|\alpha \cap \beta| \leq n_{\alpha}n_{\beta}$ and $l(\alpha) > 2 \sqrt{2} n_{\alpha}$ and $l(\beta) > 2\sqrt{2} n_{\beta}$, so that
\[ \frac{|\alpha \cap \beta|+1}{l(\alpha) l(\beta)} < \frac{n_{\alpha} n_{\beta}+1}{8 n_{\alpha}n_{\beta}} < \frac{1}{2}. \]
\item[(3/4)] In this case $|\alpha \cap \beta| \leq 2 n_{\alpha}n_{\beta}-1$ and $l(\alpha) > \sqrt{2} n_{\alpha}$ and $l(\beta) > 2\sqrt{2} n_{\beta}$, so that
\[ \frac{|\alpha \cap \beta|+1}{l(\alpha) l(\beta)} < \frac{2n_{\alpha} n_{\beta}}{4 n_{\alpha}n_{\beta}} = \frac{1}{2}. \]
\item[(4/4)] In this case, we have from Propositions \ref{prop:study_lengths} and \ref{prop:study_intersections} $|\alpha \cap \beta| \leq n_{\alpha}n_{\beta}$ and $l(\alpha) \geq \eta_0 n_\alpha + \varepsilon_0 > \sqrt{2} n_{\alpha} + \varepsilon_0$ and similarly $l(\beta) > \sqrt{2} n_{\beta} + \varepsilon_0$, so that
 \begin{align*}
\frac{|\alpha \cap \beta| + 1}{l(\alpha)l(\beta)} &\leq \frac{n_{\alpha} n_{\beta} + 1}{(\sqrt{2}n_{\alpha} + \varepsilon_0)(\sqrt{2}n_{\beta} + \varepsilon_0)}\\
& \leq \frac{n_{\alpha} n_{\beta} + 1}{2n_{\alpha}n_\beta + \sqrt{2} \varepsilon_0 (n_{\alpha} + n_{\beta}) + \varepsilon_0^2}
\end{align*}
Now,
\begin{itemize}
\item as soon as $n_{\alpha} + n_\beta \geq 3$ we have 
\[ \sqrt{2} \varepsilon_0 (n_{\alpha} + n_{\beta}) + \varepsilon_0^2 \geq 3\sqrt{2} \varepsilon_0+ \varepsilon_0^2 > 2 \]
and hence
\[
\frac{|\alpha \cap \beta| + 1}{l(\alpha)l(\beta)} < \frac{n_{\alpha} n_{\beta} + 1}{2n_\alpha n_\beta + 2} = \frac{1}{2}
\]
\item Otherwise, $n_\alpha + n_\beta = 2$ and then $\alpha$ and $\beta$ are both diagonals, and since we assumed that at least one of them is not a short diagonal (say $\beta$) we have $l(\alpha) \geq 2 \cos \left(\frac{\pi}{n}\right)$ and $l(\beta) \geq \left(2\cos \left(\frac{\pi}{n}\right) \right)^2 - 1$ (which is the length of the second shortest diagonal) and hence
\begin{align*}
\frac{|\alpha \cap \beta| + 1}{l(\alpha)l(\beta)} &\leq \frac{2}{2\cos \left(\frac{\pi}{n}\right) \left(\left(2\cos \left( \frac{\pi}{n}\right)\right)^2 - 1\right)}\\
& \leq \frac{2}{2\cos \left(\frac{\pi}{n}\right) \left(\left(2\cos \left( \frac{\pi}{10}\right)\right)^2 - 1\right)}< \frac{1}{2}
\end{align*}
as required.
\end{itemize}
\end{itemize}
\end{proof}
This concludes the proof of Theorem \ref{theo:decagon}.

\section{Bouw-M\"oller surfaces}\label{sec:Bouw-Moller}
In this section, we generalize the method developed in the previous sections to a family of Bouw-M\"oller surfaces, and we prove Theorem \ref{theo:BM}. 
Given $m,n \geq 2$ with $(m,n) \neq (2,2)$, we recall that the Bouw-M\"oller surface $S_{m,n}$ is made of $m$ semi-regular polygons $P(0), P(1),\dots, P(m-1)$, that are equiangular $2n$-gons and where the sides of the polygon $P(i), 0 \leq i \leq m-1$ have alternating length $\sin \left(\frac{i\pi}{m}\right)$ and $\sin\left(\frac{(i+1)\pi}{m}\right)$. For the extremal polygons $P(0)$ and $P(m-1)$, there is a degenerate length and these polygons are in fact regular $n$-gons. Then, the sides of $P(i)$ are identified alternatively with sides of $P(i-1)$ and $P(i+1)$. See \cite{Hooper} or \cite{BP24} for a description of these surfaces. We will need here the two following facts:

\begin{Prop}{\cite[Proposition 24]{Hooper}}\label{prop:singularities_BM})
The Bouw-M\"oller surface $S_{m,n}$ has $d:=\gcd(m,n)$ singularities.
\end{Prop}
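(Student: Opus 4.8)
The plan is to compute the number of singularities directly as the number of equivalence classes of vertices of the polygons $P(0),\dots,P(m-1)$ under the side identifications, since the singularities of a polygonal translation surface are exactly the cone points sitting at these vertex classes. Concretely, I would record the CW-structure on $S_{m,n}$ whose faces are the $m$ polygons, whose edges are the $n(m-1)$ identified sides, and whose vertices are the classes to be counted; the goal is then to show there are exactly $d=\gcd(m,n)$ of them.

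To count the vertex classes I would encode the gluing as a single permutation. Label each corner of the polygonal decomposition by a pair $(k,j)$, where $k\in\{0,\dots,m-1\}$ indexes the polygon and $j$ records the position of the corner inside $P(k)$ (an element of $\mathbb{Z}/2n\mathbb{Z}$, with the degenerate corners of the two end polygons $P(0),P(m-1)$ removed, so that these caps contribute only $n$ genuine corners each). Walking counter-clockwise around a vertex one leaves a corner through its leading edge, crosses that edge via the identification, and arrives at a new corner; this defines a permutation $\rho$ of the corner set whose cycles are exactly the vertex classes, so the number of singularities equals the number of cycles of $\rho$. The key structural point is that crossing an edge changes the polygon index by $\pm1$, and because $P(0)$ and $P(m-1)$ are only $n$-gons (the other $n$ sides being degenerate), the index $k$ is forced to turn around at the two ends: the $k$-coordinate of a $\rho$-orbit performs a reflecting billiard on $\{0,\dots,m-1\}$, whose natural unfolding has period $2(m-1)$, while the in-polygon index $j$ advances by a fixed rotation at each step.

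The crux is then to identify $\rho$, after unfolding the reflections at the caps, with a single rotation on a cyclic group and to read off that it has exactly $\gcd(m,n)$ orbits; equivalently, one exhibits a complete invariant with values in $\mathbb{Z}/d\mathbb{Z}$ that is constant on $\rho$-orbits and takes each value on a single orbit. Here the two incommensurate speeds — the billiard period coming from the $m$ polygons and the index rotation coming from the $2n$-gons — combine so that the number of orbits is governed by $\gcd(m,n)$, exactly as a line of rational slope on a torus closes up after a number of steps controlled by a greatest common divisor. As a global consistency check I would feed the resulting value $s=d$ into the Euler-characteristic identity $\chi=s-n(m-1)+m$, whose total cone-angle bookkeeping (interior corners contributing $\pi-\pi/n$, extremal corners $\pi-2\pi/n$) gives $s=mn-m-n-(2g-2)$ and hence reproduces the known genus $g=\tfrac{1}{2}\big((m-1)(n-1)-d+1\big)$ of $S_{m,n}$ — matching, for instance, $g=1$ when $(m,n)=(3,3)$.

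The main obstacle is entirely in the first half of the middle step: fixing the precise combinatorial gluing rule, that is, determining exactly which corner $(k\pm1,\sigma(j))$ one lands on after crossing an edge, including the reflection introduced between consecutive polygons and the correct behaviour at the degenerate caps. Once the affine map $\sigma$ and the reflection rule are pinned down, the orbit count becomes a finite and essentially mechanical computation. I would carry out this bookkeeping using Hooper's explicit description of the polygons and their identifications (the grid graph of \cite{Hooper}), which is precisely the combinatorial device designed to make the vertex cycles visible; alternatively, one can bypass the polygonal geometry and use the realization of $S_{m,n}$ as an abelian differential on a cyclic cover of $\mathbb{P}^1$ branched over three points, where the singularities are the zeros of the form lying over a single branch point and their number is again computed as a $\gcd$ of the covering data.
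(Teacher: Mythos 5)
The paper does not prove this statement at all: it is quoted verbatim as Proposition 24 of Hooper's paper, so the only ``proof'' on record is the citation. Your proposal sketches the natural direct argument --- count the cycles of the corner permutation $\rho$ induced by the side identifications --- which is indeed the strategy behind Hooper's own proof via his grid graphs. The problem is that you never carry it out. Everything hinges on the step you yourself flag as the ``main obstacle'': pinning down the exact gluing rule $(k,j)\mapsto(k\pm 1,\sigma(j))$, including the reflection behaviour at the caps $P(0)$ and $P(m-1)$, and then proving that the resulting permutation has exactly $\gcd(m,n)$ cycles. In place of that computation you offer an analogy (``two incommensurate speeds\dots as a line of rational slope on a torus closes up''), which is a heuristic, not an argument: nothing in the proposal establishes that the unfolded dynamics is conjugate to a rotation on a cyclic group of the right order, nor that the claimed $\mathbb{Z}/d\mathbb{Z}$-valued invariant exists, is constant on $\rho$-orbits, and separates them. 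Since the entire content of the proposition is precisely this orbit count, the proposal has a genuine gap at its center.

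Two further points. First, your fallback --- to ``carry out this bookkeeping using Hooper's explicit description (the grid graph)'' or to invoke the cyclic-cover realization ``where the number is again computed as a $\gcd$ of the covering data'' --- is not a proof either; it amounts to deferring to the very reference whose Proposition 24 is the statement to be proved. Second, the Euler-characteristic check is consistent arithmetic: with $s$ vertex classes, $n(m-1)$ edges and $m$ faces one gets $s=mn-m-n+2-2g$, which indeed returns $s=d$ when $g=\tfrac{1}{2}\bigl((m-1)(n-1)+1-d\bigr)$, matching $g=1$ for $(m,n)=(3,3)$. But as a verification it is circular unless the genus formula is established independently of the singularity count (for instance by Riemann--Hurwitz applied to the covering description); as written it cannot substitute for the missing orbit computation.
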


\begin{Prop}{\cite[Proposition 28]{Hooper}}\label{prop:rotation_BM}
The rotation by angle $\frac{2\pi}{n}$ on each of the polygons is an affine diffeomorphism of the surface.
\end{Prop}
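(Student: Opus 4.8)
The plan is to realise the claimed map as the descent to the quotient of the single Euclidean rotation $\rho := R_{2\pi/n}$ applied simultaneously to every polygon $P(i)$, and then to check that this descent is well defined, i.e. compatible with the side identifications. First I would record that $\rho$ is already a symmetry of each polygon taken individually. Labelling the $2n$ sides of the equiangular $2n$-gon $P(i)$ cyclically by $k \in \mathbb{Z}/2n\mathbb{Z}$, consecutive sides differ in direction by the exterior angle $\pi/n$, so $\rho$ rotates each side direction by $2\pi/n$ and hence acts on the sides by the index shift $k \mapsto k+2$. This shift preserves the parity of $k$, hence preserves the alternating side lengths $\sin(i\pi/m)$ and $\sin((i+1)\pi/m)$; consequently $\rho(P(i))$ is a translate of $P(i)$ and, after recentring, $\rho$ restricts to an isometry of $P(i)$ onto itself. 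Thus $\rho$ defines an affine map of the disjoint union $\bigsqcup_i P(i)$ with constant derivative $R_{2\pi/n} \in SO(2)$.

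The heart of the argument, and the step I expect to be the main obstacle, is compatibility with Hooper's edge identifications. The subtlety is that in Hooper's planar embedding consecutive polygons appear with alternating orientations (compare the stacked polygons in the figures above), so that the single planar rotation $\rho$ acts on the intrinsic cyclic edge-labellings of $P(i)$ and of $P(i\pm1)$ by shifts that need not carry the same sign. One must therefore verify directly, from Hooper's explicit side-pairing, that the pairing is \emph{equivariant}: that a glued pair $(e,e')$ is always sent to another glued pair $(\rho e, \rho e')$. Concretely I would (i) fix the cyclic labelling $k \in \mathbb{Z}/2n\mathbb{Z}$ of the sides of each $P(i)$, with even and odd indices carrying the two side-lengths and hence determining whether a side is glued to $P(i-1)$ or to $P(i+1)$; (ii) read off from \cite{Hooper} the pairing $k \mapsto \ell_i(k)$ sending side $k$ of $P(i)$ to its partner on the appropriate neighbour, which Hooper presents as an affine map of $\mathbb{Z}/2n\mathbb{Z}$; and (iii) check that $\ell_i$ commutes with the index-shift by $2$ induced by $\rho$ on both polygons, so that the whole system of identifications is $\rho$-invariant. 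Since every gluing is a translation, its derivative is the identity and automatically commutes with $R_{2\pi/n}$; the only genuine content is this combinatorial equivariance, and the careful bookkeeping of the alternating orientations of consecutive polygons (together with the two degenerate end polygons $P(0)$ and $P(m-1)$, each glued to a single neighbour) is exactly where the work lies.

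Granting the equivariance, $\rho$ descends to a well-defined bijection $\phi$ of $S_{m,n}$. Its inverse is the descent of $R_{-2\pi/n}$, so $\phi$ is a homeomorphism; away from the cone points it is locally the Euclidean rotation $\rho$, hence smooth and affine with constant derivative $R_{2\pi/n}$; and it permutes the finite singular set. Therefore $\phi$ is an affine diffeomorphism of $S_{m,n}$ with derivative $R_{2\pi/n}$, as claimed. One may note in passing that $\phi$ then has order $n$, in accordance with the order-$n$ rotational symmetry built into the Bouw--M\"oller construction, and that $\phi$ is precisely the elliptic element of the Veech group exploited in the length and intersection estimates of the previous sections.
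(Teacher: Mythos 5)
The paper does not prove this statement at all: it is quoted verbatim from Hooper (\cite[Proposition 28]{Hooper}), so there is no internal proof to compare against and your proposal has to stand on its own. As written it does not, because the step you yourself identify as ``the heart of the argument'' --- the equivariance of the side identifications under the simultaneous rotation --- is only announced, never carried out. Your items (i)--(iii) are phrased in the conditional (read off from \cite{Hooper} the pairing $k \mapsto \ell_i(k)$, then check that it commutes with the index shift by $2$), and the conclusion is drawn ``granting the equivariance.'' Since everything before that point (rotation by $2\pi/n$ preserves each equiangular $2n$-gon because it shifts the cyclic side labelling by $2$ and hence preserves the alternating lengths) and everything after it (descent, inverse, affineness away from the cone points) is routine, deferring the equivariance amounts to deferring the entire content of the proposition.

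The gap can be closed without Hooper's combinatorial bookkeeping, and more cleanly than by index-chasing. On a translation surface the gluings are translations, so a side $e$ of $P(i)$ is identified with the unique side of $P(i\pm 1)$ of the same length that is parallel to $e$ with opposite outward normal; moreover the $n$ sides of $P(i)$ glued to $P(i+1)$ realize $n$ pairwise distinct directions (same-parity sides differ in direction by $2\pi/n$), and likewise for their $n$ partner sides of $P(i+1)$. Hence the pairing is completely determined by matching directions and lengths, and the orientation ambiguity you worry about cannot occur: all polygons of a translation surface are embedded with consistent orientation and no reflections are involved in the identifications. Rotating every polygon about its own center by $2\pi/n$ shifts all side directions of $P(i)$ and of $P(i+1)$ by the same angle and preserves lengths, so it carries a direction-matched pair $(e,e')$ to the direction-matched pair $(\rho e,\rho e')$, which is exactly the required equivariance. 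With this inserted, the rest of your argument goes through: the descended map is a homeomorphism with constant derivative $R_{2\pi/n}$ off the singularities, hence an affine automorphism (indeed an isometry) of $S_{m,n}$.
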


We prove Theorem \ref{theo:BM} as follows. We first show in Section \ref{sec:lower_bound} that under the assumption $1 < d <n$ there exist two pairs of curves, each of them made of two sides of $P(0)$ (resp. $P(m-1)$), and intersecting twice. This will give the lower bound on KVol. Then, we prove in Section \ref{sec:upper_bound} that this lower bound is in fact sharp, using refinements of the length estimates of \cite{BP24} (which hold for a much larger class of surfaces). Our length estimates are similar to those obtained for the $(4m+2)-$gon in Proposition \ref{prop:study_lengths}. Better, contrary to the proof of Theorem \ref{theo:decagon} where we had to consider four different types of segments, only two are sufficient under the assumption $m,n \geq 8$ -- sides and non-sides -- and we restrict to this case for simplicity.

\subsection{Intersection of pairs of sides: Lower bound}\label{sec:lower_bound}
We first study the intersections of closed curves made of sides of the polygons on Bouw-M\"oller surfaces $S_{m,n}$. Given two closed curves $\alpha = \alpha_1 \cup \alpha_2 \cup \cdots \cup \alpha_r$ and $\beta = \beta_1 \cup \cdots \cup \beta_s$ where each of the $\alpha_i$ and $\beta_j$ is the side of a polygon, we have $\Int(\alpha,\beta) \leq \min(r,s)$ as $\alpha$ and $\beta$ can only intersect at the singularities. We show:

\begin{Prop}
Let $m \geq 2$, $n \geq 4$ and $d := gcd(m,n)$. We consider the Bouw-M\"oller surface $S_{m,n}$. Assume $d \neq 1$, then the maximal possible ratio obtained with curves made of sides is $\frac{1}{2l_0^2}$. Further, if we also assume $d \neq n$, then there is a pair of two closed curves, each of them made of two sides of $P(0)$ and/or $P(m-1)$, intersecting twice.

\end{Prop}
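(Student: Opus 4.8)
The plan is to dispose of the (easy) upper bound for curves made of two sides first, and then to concentrate on the construction, which is the real content.

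For the upper bound, let $\alpha = \alpha_1 \cup \alpha_2$ and $\beta = \beta_1 \cup \beta_2$ be two closed curves each made of two sides. Every side of any polygon has length at least $l_0 = \sin(\pi/m)$, so $l(\alpha), l(\beta) \geq 2 l_0$. Since the $\alpha_i$ and $\beta_j$ lie on the boundary of the polygonal decomposition, $\alpha$ and $\beta$ can only meet at singularities, whence $\Int(\alpha,\beta) \leq \min(2,2) = 2$. Therefore $\frac{\Int(\alpha,\beta)}{l(\alpha)l(\beta)} \leq \frac{2}{4 l_0^2} = \frac{1}{2l_0^2}$, and equality forces the four sides to have length exactly $l_0$ (so they are sides of the extremal polygons $P(0)$ or $P(m-1)$, equivalently the short sides of $P(1)$ or $P(m-2)$ glued to them) and forces $\alpha$ and $\beta$ to cross at two singularities with the same sign.

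It remains to exhibit such a pair, and this is where $1 < d < n$ is used. First I would work in the extremal polygon $P(0)$, a regular $n$-gon all of whose sides have length $l_0$, with vertices $v_0, \dots, v_{n-1}$ in cyclic order. By \cite[Proposition 28]{Hooper} the rotation $\rho$ by $2\pi/n$ is an affine diffeomorphism; it preserves $P(0)$, sends $v_k$ to $v_{k+1}$, hence permutes the $d$ singularities through an induced permutation $\bar\rho$ and sends the side $[v_k,v_{k+1}]$ to $[v_{k+1},v_{k+2}]$. Consequently the singularity class of $v_k$ is $\bar\rho^k$ applied to that of $v_0$, so the singularities met along $\partial P(0)$ follow a cyclic pattern of some period $p \mid n$, and the sides $[v_k,v_{k+1}]$ and $[v_{k+p},v_{k+p+1}]$ join the same unordered pair of singularities. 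From Hooper's description of the identifications one reads that this pattern realizes exactly $d$ distinct singularities (just as in the regular $n$-gon, where $d=2$ and the two singularities alternate), so that $p = d$. The hypothesis $1 < d$ then gives $p \geq 2$, so consecutive vertices of $P(0)$ are distinct singularities and no single side is a closed loop, while the hypothesis $d < n$ gives $n/d \geq 2$, so each singularity pair occurring along $\partial P(0)$ is joined by at least two sides. Drawing if necessary on the analogous sides of $P(m-1)$ (and recalling that for $d = 2$ all sides join the same pair), I would obtain at least four distinct length-$l_0$ sides joining one common pair of singularities $\{A,B\}$.

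Finally I would mimic the $(4m+2)$-gon construction of Section \ref{sec:preliminaries}: among the sides joining $A$ and $B$, choose two, $s$ and $s'$, forming the closed curve $\alpha = s \cup s'$ with corners at $A$ and $B$, and two others, $t$ and $t'$, forming $\beta = t \cup t'$, chosen so that around both $A$ and $B$ the four germs interleave in the cyclic order $s, t, s', t'$, exactly as in the condition $s_1 < t_1 < s_2 < t_2$ used for the decagon. This forces $\alpha$ and $\beta$ to cross transversally at $A$ and at $B$ with the same sign, so that $\Int(\alpha,\beta) = 2$ and $\frac{\Int(\alpha,\beta)}{l(\alpha)l(\beta)} = \frac{2}{(2l_0)^2} = \frac{1}{2l_0^2}$. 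The main obstacle is the bookkeeping of the previous paragraph: extracting from Hooper's combinatorics the precise cyclic arrangement of singularities along $\partial P(0)$ and $\partial P(m-1)$, confirming that $p = d$ and that a common pair $\{A,B\}$ carries four length-$l_0$ sides, and then checking the interleaving (sign) condition around both $A$ and $B$. The hypothesis $d < n$ is precisely what guarantees the multiplicity $n/d \geq 2$ of each singularity pair; when $d = n$ every pair is joined by a single side, two side-pairs can meet at only one singularity, and the bound correctly drops to the $\frac{1}{4 l_0^2}$ of the Conjecture.
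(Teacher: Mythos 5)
Your upper-bound half is fine and coincides with the observation the paper makes immediately before the statement: curves made of sides can only meet at singularities, so $\Int(\alpha,\beta)\leq 2$, while $l(\alpha),l(\beta)\geq 2l_0$, giving the ratio $\frac{1}{2l_0^2}$. The construction half, however, has a genuine gap, and you have named it yourself: you never establish that four sides can be chosen whose germs interleave in the cyclic order $s,t,s',t'$ around \emph{both} singularities, with the two crossings carrying the \emph{same} sign. This cannot be deferred as ``bookkeeping''; it is the entire content of the statement. Producing four length-$l_0$ sides joining a common pair $\{A,B\}$ (which your counting does give, since there are $n/d\geq 2$ such sides in each of $P(0)$ and $P(m-1)$) is not sufficient: around $A$ the four sides could occur in the cyclic order $s,s',t,t'$, giving algebraic intersection $0$ at $A$; and even if they cross twice, the two signs could be opposite and cancel, again giving $\Int(\alpha,\beta)=0$. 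Since the intersection here is algebraic, sign coherence at the two singularities is exactly the delicate point, and in the decagon case it rested on an explicitly verified fact (the cyclic order of the side labels around each singularity is the natural order), for which you offer no Bouw--M\"oller analogue.

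The paper resolves this not by choosing four sides and checking their cyclic order, but by a mixed construction that builds the interleaving in: $\alpha=\alpha_1\cup\alpha_2$ is made of two sides of $P(0)$ joining $z_1$ to $z_2$, and $\beta_1,\beta_2$ are \emph{defined} as the first sides of $P(m-1)$ encountered when turning anti-clockwise around $z_1$ starting from $\alpha_1$ and from $\alpha_2$ respectively; the rotation by $\frac{2\pi}{n}$ (an affine automorphism by \cite[Proposition 28]{Hooper}) guarantees that these sides of $P(m-1)$ also join $z_1$ to $z_2$. The cyclic order $\alpha_1,\beta_1,\alpha_2,\beta_2$ at $z_1$ then holds by construction, so $\Int_{z_1}(\alpha,\beta)=+1$. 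The sign at $z_2$ --- the step your sketch has no substitute for --- is computed by introducing the companion sides $\tilde{\alpha}_i,\tilde{\beta}_i$ (the sides met immediately after $\alpha_i,\beta_i$ when turning around $z_1$, joining $z_d$ to $z_1$) and using the rotation symmetry to identify $\Int_{z_2}(\alpha,\beta)$ with $\Int_{z_1}(\tilde{\alpha}_1\cup\tilde{\alpha}_2,\tilde{\beta}_1\cup\tilde{\beta}_2)$, which can then be read off from the cyclic order at $z_1$ (see Figure \ref{fig:intersections_sides_BM}). Without an argument of this kind, or an explicit determination of the cyclic order of sides around each singularity of $S_{m,n}$, your proof does not go through.
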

\begin{proof}
The first part of the statement comes from the fact that closed curves $\alpha$ and $\beta$ respectively made of $r$ and $s$ sides can intersect at most $\min(r,s)$ times. The assumption on $d$ ensures that $r, s \geq 2$ since in this case sides are not closed curves. This gives
\[
\frac{\Int(\alpha, \beta)}{l(\alpha) l(\beta)} \leq \frac{ \min(r,s)}{r s l_0^2} \leq \frac{1}{\max(r,s) l_0^2} \leq \frac{1}{2 l_0^2}
\]
and therefore equality holds only if $r = s = 2$.\newline

Let us now assume $d \neq 1, n$ in order to prove the second part of the statement. Recall that $d$ is the number of singularities of the Bouw-M\"oller surface $S_{m,n}$ (Proposition \ref{prop:singularities_BM}), and that $S_{m,n}$ is invariant under rotation of each of the polygons by angle $\frac{2\pi}{n}$ (Proposition \ref{prop:rotation_BM}).
In particular, the singularities of $S_{m,n}$ must be disposed symmetrically on the vertices of $P(0)$ (resp. $P(m-1)$). Moreover, if we denote by $z_1, \dots, z_d$ the singularities of $S_{m,n}$ and we label the vertices of $P(0)$ (resp. $P(m-1)$) in cyclic (say, anti-clockwise) order while choosing the vertex $0$ to represent $z_1$, then the vertices representing $z_1$ will be exactly those of label $dk$ for $k \in \{0, \dots, \frac{n}{d}-1\}$. Now, if say the vertex of label $1$ represent $z_2$, then the vertices representing $z_2$ will be exactly those of label $dk+1$ (again $k \in \{0, \dots, \frac{n}{d}-1\}$). It will also be the case for $P(m-1)$.

Then, let us consider the closed curve which is made of two (distinct) sides going from $z_1$ to $z_2$ in $P(0)$ (or, to be more precise, one, named $\alpha_1$ oriented from $z_1$ to $z_2$ and the other, named $\alpha_2$, from $z_2$ to $z_1$, so that the resulting curve $\alpha = \alpha_1 \cup \alpha_2$ is closed). This is made possible by $d < n$, so that there are at least two sides from a vertex representing the singularity $z_1$ to a vertex representing the singularity $z_2$ in $P(0)$.\newline

Now, turn around the singularity $z_1$, in the anti-clockwise order, starting at the vertex of $\alpha_1$: we first reach a side $\tilde{\alpha_1}$ of $P(0)$, which connects the singularities $z_1$ and $z_d$ (see the left part of Figure \ref{fig:intersections_sides_BM}). For later use, we will orient this side from $z_d$ to $z_1$. Then, we cross an angular sector in every polygon $P(i)$ until reaching a side $\beta_1$ of $P(m-1)$. As $z_2$ comes after $z_1$ in the anti-clockwise order, this side connects $z_1$ to $z_2$. (This is again because the rotation by angle $\frac{2\pi}{n}$ is a symmetry of the surface). We name this side $\beta_1$, and again for later use, we will denote by $\tilde{\beta}_1$ the companion side which comes right after in the anti-clockwise order, see the right part of Figure \ref{fig:intersections_sides_BM}. The side $\tilde{\beta}_1$ is a side of $P(m-1)$ connecting $z_1$ to $z_d$, but which we will orient from $z_d$ to $z_1$. Now, continue turning around $z_1$ until intersecting $\alpha_2$, then its companion side $\tilde{\alpha_2}$, and continue a bit further until reaching another side $\beta_2$ of $P(m-1)$, which again connects $z_1$ to $z_2$, and then its companion side $\tilde{\beta}_2$. The side $\alpha_2$ and $\beta_2$ will be oriented from $z_2$ to $z_1$, whereas $\tilde{\alpha_2}$ and $\tilde{\beta}_2$ are oriented from $z_1$ to $z_d$, see Figure \ref{fig:intersections_sides_BM}.\newline

Now $\alpha = \alpha_1 \cup \alpha_2$ and $\beta = \beta_1 \cup \beta_2$, intersect at both singularities $z_1$ and $z_2$ with the same sign. This is because
\begin{itemize}
\item At the singularity $z_1$, the cyclic order is by construction $\alpha_1, \beta_1, \alpha_2, \beta_2$, that is $\Int_{z_1}(\alpha,\beta) = + 1$. 
\item At the singularity $z_2$, using the symmetry by rotation of angle $\frac{2\pi}{n}$ we obtain that $\Int_{z_2}(\alpha,\beta) = \Int_{z_1}(\tilde{\alpha_1} \cup \tilde{\alpha_2},\tilde{\beta}_1 \cup \tilde{\beta}_2)$, and the cyclic order at $z_1$ between $\tilde{\beta}$ and $\tilde{\alpha}$ gives $\Int_{z_1}(\tilde{\alpha}, \tilde{\beta}) = +1$, as we first see $\tilde{\alpha_1}$, then $\tilde{\beta}_1$, $\tilde{\alpha_2}$ and finally $\tilde{\beta}_2$.
\end{itemize}
\end{proof}

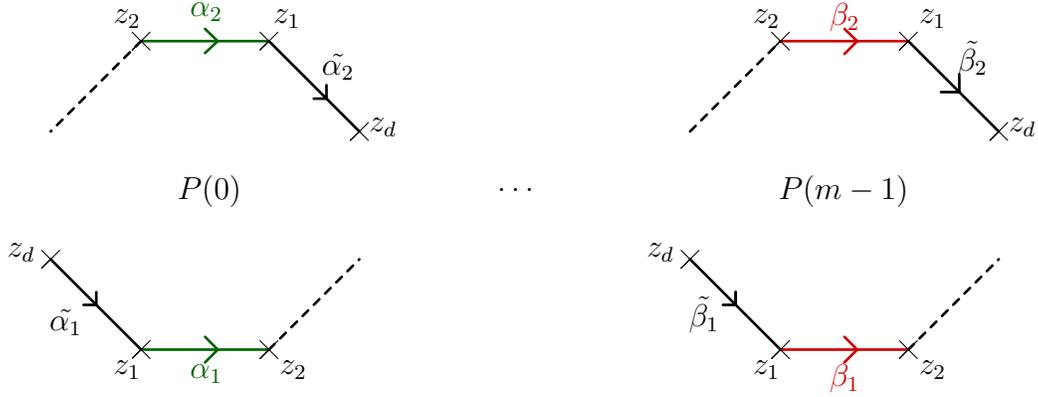
\begin{figure}
\center
\definecolor{ccqqqq}{rgb}{0.8,0,0}
\definecolor{qqwuqq}{rgb}{0,0.39215686274509803,0}
\begin{tikzpicture}[line cap=round,line join=round,>=triangle 45,x=1cm,y=1cm, scale=1.7]
\clip(-1.5,-0.5) rectangle (7.4,3);
\draw [line width=1pt,color=qqwuqq] (0,0)-- (1,0);
\draw [line width=1pt,dash pattern=on 3pt off 3pt] (1,0)-- (1.7071067811865475,0.7071067811865475);
\draw [line width=1pt] (0,0)-- (-0.7071067811865477,0.7071067811865478);
\draw [line width=1pt,color=qqwuqq] (0,2.414213562373095)-- (1,2.414213562373095);
\draw [line width=1pt] (1,2.414213562373095)-- (1.7071067811865475,1.7071067811865472);
\draw [line width=1pt,dash pattern=on 3pt off 3pt] (0,2.414213562373095)-- (-0.7071067811865475,1.7071067811865477);
\draw [line width=1pt,color=ccqqqq] (5,2.414213562373095)-- (6,2.414213562373095);
\draw [line width=1pt] (6,2.414213562373095)-- (6.707106781186548,1.7071067811865472);
\draw [line width=1pt,color=ccqqqq] (5,0)-- (6,0);
\draw [line width=1pt] (5,0)-- (4.292893218813452,0.7071067811865478);
\draw [line width=1pt,dash pattern=on 3pt off 3pt] (6,0)-- (6.707106781186548,0.7071067811865475);
\draw [line width=1pt,dash pattern=on 3pt off 3pt] (4.292893218813452,1.7071067811865477)-- (5,2.414213562373095);
\draw [line width=1pt,color=qqwuqq] (0.6,0)-- (0.5,0.1);
\draw [line width=1pt,color=qqwuqq] (0.6,0)-- (0.5,-0.1);
\draw [line width=1pt] (-0.35,0.35)-- (-0.35,0.45);
\draw [line width=1pt] (-0.35,0.35)-- (-0.45,0.35);
\draw [line width=1pt] (1.4501067811865473,1.9641067811865474)-- (1.45,2.07);
\draw [line width=1pt] (1.4501067811865473,1.9641067811865474)-- (1.3501504201393928,1.964041319887102);
\draw [line width=1pt,color=qqwuqq] (0.5996055080027693,2.414213562373095)-- (0.499322624585732,2.3200604481088187);
\draw [line width=1pt,color=qqwuqq] (0.5996055080027693,2.414213562373095)-- (0.5000654607591916,2.5102265085144606);
\draw [line width=1pt,color=ccqqqq] (5.6,0)-- (5.5,0.1);
\draw [line width=1pt,color=ccqqqq] (5.6,0)-- (5.5,-0.1);
\draw [line width=1pt,color=ccqqqq] (5.599651286446135,2.414213562373095)-- (5.5000621257707545,2.313116672085849);
\draw [line width=1pt,color=ccqqqq] (5.599651286446135,2.414213562373095)-- (5.500700517826366,2.509741425214165);
\draw [line width=1pt] (6.3992634811040086,2.0149500812690864)-- (6.398188745637021,2.155552467807562);
\draw [line width=1pt] (6.3992634811040086,2.0149500812690864)-- (6.255057195384485,2.0156016186717842);
\draw [line width=1pt] (4.65,0.35)-- (4.65,0.45);
\draw [line width=1pt] (4.65,0.35)-- (4.55,0.35);
\draw [line width=1pt] (4.65,0.35)-- (4.65,0.45);
\draw (0.2,1.45) node[anchor=north west] {$P(0)$};
\draw (4.9,1.45) node[anchor=north west] {$P(m-1)$};
\draw [color=qqwuqq](0.3,-0.03) node[anchor=north west] {$\alpha_1$};
\draw (-0.8,0.4) node[anchor=north west] {$\tilde{\alpha_1}$};
\draw [color=qqwuqq](0.3,2.8) node[anchor=north west] {$\alpha_2$};
\draw (1.33,2.4) node[anchor=north west] {$\tilde{\alpha_2}$};
\draw [color=ccqqqq](5.3,-0.03) node[anchor=north west] {$\beta_1$};
\draw (4.2,0.5) node[anchor=north west] {$\tilde{\beta_1}$};
\draw [color=ccqqqq](5.3,2.8) node[anchor=north west] {$\beta_2$};
\draw (6.3,2.5) node[anchor=north west] {$\tilde{\beta_2}$};
\draw (-0.3,0) node[anchor=north west] {$z_1$};
\draw (1,0) node[anchor=north west] {$z_2$};
\draw (-1.13,0.92) node[anchor=north west] {$z_d$};
\draw (0.95,2.75) node[anchor=north west] {$z_1$};
\draw (-0.3,2.75) node[anchor=north west] {$z_2$};
\draw (1.7,1.9) node[anchor=north west] {$z_d$};
\draw (4.7,0) node[anchor=north west] {$z_1$};
\draw (6,0) node[anchor=north west] {$z_2$};
\draw (3.87,0.92) node[anchor=north west] {$z_d$};
\draw (6,2.75) node[anchor=north west] {$z_1$};
\draw (4.7,2.75) node[anchor=north west] {$z_2$};
\draw (6.7,1.9) node[anchor=north west] {$z_d$};
\draw (2.7,1.35) node[anchor=north west] {$\dots$};
\begin{scriptsize}
\draw [color=black] (0,0)-- ++(-2pt,-2pt) -- ++(4pt,4pt) ++(-4pt,0) -- ++(4pt,-4pt);
\draw [color=black] (1,0)-- ++(-2pt,-2pt) -- ++(4pt,4pt) ++(-4pt,0) -- ++(4pt,-4pt);
\draw [color=black] (1.7071067811865475,1.7071067811865472)-- ++(-2pt,-2pt) -- ++(4pt,4pt) ++(-4pt,0) -- ++(4pt,-4pt);
\draw [color=black] (1,2.414213562373095)-- ++(-2pt,-2pt) -- ++(4pt,4pt) ++(-4pt,0) -- ++(4pt,-4pt);
\draw [color=black] (0,2.414213562373095)-- ++(-2pt,-2pt) -- ++(4pt,4pt) ++(-4pt,0) -- ++(4pt,-4pt);
\draw [color=black] (-0.7071067811865477,0.7071067811865478)-- ++(-2pt,-2pt) -- ++(4pt,4pt) ++(-4pt,0) -- ++(4pt,-4pt);
\draw [color=black] (5,0)-- ++(-2pt,-2pt) -- ++(4pt,4pt) ++(-4pt,0) -- ++(4pt,-4pt);
\draw [color=black] (6,0)-- ++(-2pt,-2pt) -- ++(4pt,4pt) ++(-4pt,0) -- ++(4pt,-4pt);
\draw [color=black] (6.707106781186548,1.7071067811865472)-- ++(-2pt,-2pt) -- ++(4pt,4pt) ++(-4pt,0) -- ++(4pt,-4pt);
\draw [color=black] (6,2.414213562373095)-- ++(-2pt,-2pt) -- ++(4pt,4pt) ++(-4pt,0) -- ++(4pt,-4pt);
\draw [color=black] (5,2.414213562373095)-- ++(-2pt,-2pt) -- ++(4pt,4pt) ++(-4pt,0) -- ++(4pt,-4pt);
\draw [color=black] (4.292893218813452,0.7071067811865478)-- ++(-2pt,-2pt) -- ++(4pt,4pt) ++(-4pt,0) -- ++(4pt,-4pt);
\end{scriptsize}
\end{tikzpicture}
\caption{The two pairs of curves $\alpha_1 \cup \alpha_2$ and $\beta_1 \cup \beta_2$ intersect at both singularities $z_1$ and $z_2$ with the same sign, for Bouw-M\"oller surfaces with $1 < \gcd(m,n) < n$.}
\label{fig:intersections_sides_BM}
\end{figure}

\subsection{Polygonal subdivision: Upper bound}\label{sec:upper_bound}
Contrary to the regular $2n-$gon, adjacent segments do not necesarily come in pairs and there could be sequences of consecutive adjacent segments containing an odd number of segments, see \cite{BP24}. Still, in order to estimate the lengths of a saddle connection it is convenient to group adjacent segments in pairs when it is possible, and therefore we define:

\begin{Nota}
Let $\alpha$ be a saddle connection on $S_{m,n}$. We let
\begin{itemize}
\item $p_{\alpha}$ be the number of non-adjacent segments in the polygonal decomposition of $\alpha$;
\item $q_{\alpha}$ be the maximal number of (distinct) pairs of consecutives adjacent segments that can be formed with the polygonal decomposition of $\alpha$.
\item $n_{\alpha} = p_{\alpha} + q_{\alpha}$.
\end{itemize}
\end{Nota}

\begin{Rema}
For the regular $n-$gons, the definition of $n_\alpha$ is consistent with Notation \ref{nota:nalpha1}, because sandwiched segments are pairs of adjacent segments, and in the case of the regular $n-$gons adjacent segments always come in pairs. Since this is not the case anymore here we will not use the terminology of sandwiched/non-sandwiched segments and we will work directly with that of adjacent/non-adjacent segments.
\end{Rema}
It was already noticed in \cite{BP24} that the worst case for the count of intersections is when the non-adjacent segments are always separated by an \emph{odd} number of adjacent segments. A saddle connection satisfying this property will be called an \emph{odd saddle connection}. We have:

\begin{Prop}{\cite[Proposition 3.18]{BP24}}\label{prop:intersections_BM}
Let $m \geq 2, n \geq 3$ and $(m,n) \neq (2,3)$. For any two saddle connections $\alpha$ and $\beta$ on the Bouw-M\"oller surface $S_{m,n}$, we have:
\[ |\alpha \cap \beta| \leq n_{\alpha} n_{\beta} \]
and equality can occur only if both $\alpha$ and $\beta$ are odd saddle connections.
\end{Prop}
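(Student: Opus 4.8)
The plan is to reduce the global count $|\alpha \cap \beta|$ to a segment-by-segment analysis and then reorganize the resulting sum along the pairing of adjacent segments, exactly in the spirit of the proof of Proposition~\ref{prop:intersection_BOU23} for the regular polygon (see \cite[Section 3.3]{Bou23}). First I would fix the sectors of $\alpha$ and $\beta$: these determine the transition diagrams and hence, inside each polygon $P(i)$, which side plays the role of the \sas. Cutting $\alpha$ and $\beta$ at every side crossing produces the polygonal decompositions into adjacent and non-adjacent segments, and $|\alpha \cap \beta|$ is then the sum, over all pairs made of one segment of $\alpha$ and one segment of $\beta$, of their transverse intersections.

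The core is a local estimate: two segments lying in a common polygon and travelling in fixed directions cross at most once, so any non-adjacent segment of $\alpha$ meets any non-adjacent segment of $\beta$ at most once. The delicate point concerns the \emph{adjacent} segments, which are short arcs cutting off a corner of a polygon and may fail the naive `one crossing per unit' budget on their own. Following \cite{BLM22, BP24}, the idea is to group consecutive adjacent segments into pairs, so that each pair behaves like a single sandwiched-type unit whose crossing behaviour against a fixed segment of $\beta$ is again bounded by one. Partitioning the segments of $\alpha$ into its $p_{\alpha}$ non-adjacent segments together with $q_{\alpha}$ such pairs produces exactly $n_{\alpha} = p_{\alpha}+q_{\alpha}$ units, and similarly $n_{\beta}$ units for $\beta$; establishing that each unit of $\alpha$ meets each unit of $\beta$ at most once then yields $|\alpha \cap \beta| \leq n_{\alpha} n_{\beta}$.

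The feature absent in the regular-polygon case, and which forces the appearance of \emph{odd} saddle connections, is that on $S_{m,n}$ adjacent segments need not occur in even-sized runs; this is precisely why $q_{\alpha}$ is defined as a \emph{maximal} number of pairs. Thus each maximal run of adjacent segments of odd length leaves one unpaired short arc, and I would have to verify that its crossings with $\beta$ are already absorbed by the units it is attached to, so that the bound $n_{\alpha}n_{\beta}$ is never exceeded. The equality analysis then amounts to tracking when no crossing is wasted: one checks that the extremal case, in which every unit of $\alpha$ genuinely meets every unit of $\beta$ with no intersection lost at the unpaired arcs, can occur only when the non-adjacent segments of each saddle connection are separated by odd numbers of adjacent segments, i.e.\ when both $\alpha$ and $\beta$ are odd. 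The main obstacle is exactly this bookkeeping near the corners of the semi-regular polygons, where the alternating side lengths and the identifications between $P(i-1)$, $P(i)$ and $P(i+1)$ must be used to bound how a short arc can meet the segments of $\beta$; since this detailed combinatorial argument is carried out in \cite[Proposition 3.18]{BP24}, I would import the statement from there rather than reprove it.
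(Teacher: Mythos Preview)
Your proposal is correct and aligns with the paper's own treatment: the paper does not reprove this result but simply cites \cite[Proposition 3.18]{BP24}, noting that it holds under more general hypotheses. Your sketch of the underlying segment-by-segment bookkeeping is a reasonable outline of that argument, and your conclusion to import the statement from \cite{BP24} is exactly what the paper does.
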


This proposition is proven under more general assumptions on the considered translation surface, but we will only use it for Bouw-M\"oller surfaces here. 


Roughly speaking, the idea of \cite{BP24} was to compensate the loss in the count of intersections for odd saddle connections by an adequate length estimate. In order to obtain the upper bound for Theorem \ref{theo:BM}, we provide the following length estimate, whose proof is similar in spirit to that of Proposition \ref{prop:study_lengths}.

\begin{Prop}\label{prop:study_lengths_BM}
Let $m,n \geq 8$ 
Let $\alpha$ be a saddle connection on the Bouw-M\"oller surface $S_{m,n}$ which is not a side of $P(0)$ or $P(m-1)$, we have
\begin{equation}\label{eq:length_estimate_BM}
l(\alpha) \geq \left(\sqrt{2} n_{\alpha} + (\sqrt{2} - 1)\right) l_0.
\end{equation}
\end{Prop}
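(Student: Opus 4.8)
The plan is to adapt the proof of Proposition~\ref{prop:study_lengths}, and in particular of Lemma~\ref{lem:length_consecutive_short}, to the semi-regular setting. I would unfold $\alpha$ into a straight segment crossing a chain of polygons and estimate its length by the segment-by-segment accounting of horizontal and vertical extents used there and in Lemma~\ref{lem:case3}. Since the polygonal decomposition of $\alpha$ consists of $p_\alpha$ non-adjacent segments together with runs of adjacent segments grouped into $q_\alpha$ pairs, with $n_\alpha = p_\alpha + q_\alpha$, the target inequality reads $l(\alpha)\geq \sqrt2\,n_\alpha\,l_0 + (\sqrt2-1)\,l_0$: a contribution of, on average, $\sqrt2\,l_0$ per counted unit, together with one extra boundary term of $(\sqrt2-1)\,l_0$.

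First I would establish the bound $l(\alpha)\geq \sqrt2\,n_\alpha\,l_0$. Individual segments can be nearly as short as $l_0$ (both short non-adjacent segments running close to a sandwiched side, and a lone sandwiched pair), so rather than bounding them one at a time I would group them into \emph{maximal trips through the short cylinder}, as in Lemma~\ref{lem:length_consecutive_short}, and bound each trip by unfolding, obtaining the analog of~\eqref{eq:length_estimate_1}. The worst case occurs in the extremal polygons $P(0)$ and $P(m-1)$: these are regular $n$-gons of side $l_0=\sin(\pi/m)$, while every side of every intermediate polygon $P(i)$ has length $\sin(i\pi/m)\geq l_0$, so a bound proved in the extremal polygons dominates the general case. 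The hypotheses $m,n\geq 8$ are exactly what make the relevant trigonometric inequalities hold with room to spare — in particular $2\cos(\pi/n)\geq 2\cos(\pi/8)=1.847\ldots>\sqrt2$ — so that each trip of $p$ segments contributes at least $\sqrt2\,p\,l_0$ and each long non-adjacent segment (the analog of Lemma~\ref{lem:long_segments}) has length at least $2\cos(\pi/n)\,l_0\geq\sqrt2\,l_0$. Summing over the decomposition then yields $l(\alpha)\geq\sqrt2\,n_\alpha\,l_0$.

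It then remains to gain the surplus $(\sqrt2-1)\,l_0$. In the base case $n_\alpha=1$ the segment $\alpha$ is a single diagonal, and since it is not a side its length is at least that of the shortest diagonal, $2\cos(\pi/n)\,l_0\geq 2\cos(\pi/8)\,l_0 > (2\sqrt2-1)\,l_0 = \big(\sqrt2\cdot 1 + (\sqrt2-1)\big)\,l_0$, so the inequality holds outright. For $n_\alpha\geq 2$ I would front-load the surplus onto a boundary segment of the unfolded trajectory: because $\alpha$ starts and ends at singularities, its first and last segments cross a full diagonal of their polygon rather than the reduced crossings produced by the staircase offset of the interior segments (the loss of a $\sin(\pi/n)$-factor in one coordinate visible in~\eqref{eq:length_estimate_1}), and comparing a full crossing with the generic $\sqrt2\,l_0$ estimate produces the extra $(\sqrt2-1)\,l_0$. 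This mirrors how the constants $\varepsilon_0$ and $\varepsilon_1=\sqrt2-1$ arose from the first segment in the proofs of Lemma~\ref{lem:case3} and Lemma~\ref{lem:length_consecutive_short}; since $\varepsilon_0>\varepsilon_1$, the weaker surplus $\sqrt2-1$ demanded here leaves slack.

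The main difficulty is the bookkeeping forced by the fact that, contrary to the regular $2n$-gon, adjacent segments need not come in pairs: a run of adjacent segments of odd length leaves an unpaired segment that is not counted by $q_\alpha$, and one must verify both that discarding its (positive) length is harmless for the lower bound and that the leftover and boundary segments genuinely supply the full $(\sqrt2-1)\,l_0$. A careful analysis of where the endpoints of the first and last segments sit — at a vertex versus in the interior of a side, exactly as in the cases of Figure~\ref{fig:cases_additional_length} — is needed to pin the surplus down, and one must also confirm that across all crossing types the extremal polygons $P(0),P(m-1)$ are indeed the worst case. This last point is where restricting to $m,n\geq 8$ pays off, as it removes the numerous small-parameter exceptions that would otherwise require separate, polygon-dependent treatment.
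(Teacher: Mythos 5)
Your plan takes a genuinely different route from the paper, and its central reduction is where it breaks down. The paper does \emph{not} use maximal trips through short cylinders for this proposition; it explicitly remarks that for $m,n\geq 8$ such trips are unnecessary (they would only be needed for $3\leq m\leq 7$). Instead, it classifies the segments of the polygonal decomposition into types according to which polygon they lie in and where their endpoints sit: types (a)--(f) are \emph{long}, of length at least $(2\sqrt2-1)l_0=\bigl(\sqrt2+(\sqrt2-1)\bigr)l_0$ precisely because $m,n\geq 8$; the possibly short ones are (g) corner-cutting segments of $P(1)$ (resp.\ $P(m-2)$) over a short side, (h) corner-cutting segments of $P(0)$ (resp.\ $P(m-1)$), and (i) pairs of adjacent segments entering $P(0)$ (resp.\ $P(m-1)$). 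Each short segment or pair is grouped with one specified neighbouring long segment, the combined length is shown to be at least $(3\sqrt2-1)l_0$ (Lemma \ref{lem:cases_ghi}), and then---this is the bulk of the argument---one verifies that these groups never overlap. Summing over groups, each of which carries its own surplus $(\sqrt2-1)l_0$, gives the bound; no boundary-segment or ``front-loading'' argument appears anywhere.

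The concrete gaps in your proposal are the following. First, the claim that ``the worst case occurs in the extremal polygons'' because intermediate polygons have longer sides is false: a type (g) segment lies in $P(1)$, cuts the corner over a \emph{short} side of $P(1)$ (which also has length $l_0$), and can be arbitrarily close to $l_0$ in length, so short segments are not confined to $P(0)$ and $P(m-1)$. Moreover $P(1)$ is an equiangular $2n$-gon with interior angle $\pi(1-\tfrac1n)$ and alternating side lengths, not a regular $n$-gon (interior angle $\pi(1-\tfrac2n)$), so no estimate proved inside $P(0)$ transfers to it by monotonicity; the estimate that rescues a type (g) segment involves the neighbouring polygon $P(2)$ and quantities mixing $m$ and $n$, such as $4\cos\frac{\pi}{m}\cos\frac{\pi}{n}+4\cos^2\frac{\pi}{m}$ (Figure \ref{fig:case_f_setting}), as well as inter-polygon alignment facts like the collinearity of the vertices $A,B,C$ in Figure \ref{fig:short_cylinder_BM}. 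Your proposal never confronts this geometry, and the asserted analog of \eqref{eq:length_estimate_1} for trips in $S_{m,n}$ is not established. Second, you flag the ``bookkeeping'' caused by odd runs of adjacent segments but do not resolve it; in the paper this is exactly the non-overlap verification, which occupies most of the proof and is where the hypothesis $m\geq 8$ is used a second time. Third, your base case is wrong as stated: $n_\alpha=1$ does not force $\alpha$ to be a diagonal, since one can have $p_\alpha=0$, $q_\alpha=1$ (a single pair of adjacent segments crossing one side between two singularities), and this case requires the type (i) analysis rather than a comparison with the shortest diagonal.
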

\begin{Rema}
This proposition is stated for $m \geq 8$ and $n \geq 8$ for simplicity. For $3 \leq m \leq 7$, a similar Proposition holds but there are more cases to consider and we will omit these cases for simplicity. In the case $n < 8$, apart from the cases $(m,n)=(2,4)$ or $(3,4)$ the same proposition should also hold up to small modifications: for example, the short diagonals of $P(0)$ (resp. $P(m-1)$) do not satisfy \eqref{eq:length_estimate_BM} and would have to be considered separately in the whole argument, as we did with the curves of type (2) for the regular $n-$gons.
\end{Rema}
\begin{Rema}
When $m=2$ there are saddle connections similar to the type (2) saddle connections on the regular $n-$gon which do not satisfy Equation \eqref{eq:length_estimate_BM}. If fact, for $m=2$ and $n \geq 10$, the proof of Proposition \ref{prop:study_lengths} extends directly and shows that these saddle connections are the only additional exception up to symmetry (there is no need to consider the analog of saddle connections of type $(3)$ on the double regular $n-$gon). Furthermore, an analysis of the possible cases as done in Section \ref{sec:conclusion} for the $(4m+2)-$gon directly yields the conclusion of Theorem \ref{theo:BM} for $m=2$ (assuming $n \geq 10$ is even).
\end{Rema}

We leave the proof of Proposition \ref{prop:study_lengths_BM} for the final section, and we directly explain how to conclude the proof of Theorem \ref{theo:BM} using Propositions \ref{prop:intersections_BM} and \ref{prop:study_lengths_BM}. 

Given $\gamma = \gamma_1 \cup \cdots \cup \gamma_k$ and $\delta = \delta_1 \cup \cdots \cup \delta_l$ two closed curves made of saddle connections $\gamma_1, \dots, \gamma_k$ (resp. $\delta_1, \dots, \delta_l)$, we have
\begin{align*}
\Int(\gamma,\delta) &\leq \left( \sum_{i,j} |\gamma_i \cap \delta_j| \right) + \min(k,l) \\
&\leq \left( \sum_{i} \left( \sum_{j} |\gamma_i \cap \delta_j| \right) +1 \right).
\end{align*}
Now, we can remark that for every $i,j$ such that neither $\gamma_i$ nor $\delta_j$ is a side of $P(0)$ or $P(m-1)$, we have
\begin{equation}\label{eq:ineq_intersection_BM}
|\gamma_i \cap \delta_j| +1 \leq \frac{1}{2 l_0^2} l(\gamma_i) l(\delta_j).
\end{equation}
Where we get the additional $+1$ from the fact that $2(\sqrt{2}-1)+(\sqrt{2}-1)^2 = 1$.
In particular, as soon as none of the saddle connections of $\gamma$ and $\delta$ are sides of $P(0)$ or $P(m-1)$, we obtain
\begin{align*}
\Int(\gamma,\delta)
&\leq \left( \sum_{i} \left( \sum_{j} |\gamma_i \cap \delta_j| \right) +1 \right)\\
&\leq  \sum_{i} \left( \sum_{j} |\gamma_i \cap \delta_j| +1 \right)\\
&\leq \sum_{i} \left( \sum_{j} \frac{1}{2 l_0^2} l(\gamma_i) l(\delta_j) \right)\\
&\leq \frac{1}{2l_0^2} l(\gamma) l(\delta)
\end{align*}
as required.\newline

We will now assume that one of the curves, say $\gamma$, is made of at least one short side. We will proceed as in the study of the regular $(4m+2)-$gon by grouping all the sides together. 
More precisely, we introduce the following

\begin{Nota}\label{nota:GyD}
\begin{itemize}
\item[(i)] If $\gamma$ (resp. $\delta$) contains at least two saddle connections which are sides of the polygons $P(0), \dots, P(m-1)$, we denote by $\mathfrak{G} = \gamma_{i_1} \cup \cdots \cup \gamma_{i_p}$ (resp. $\mathfrak{D} = \delta_{j_1} \cup \cdots \cup \delta_{j_q}$) the collection of sides that appear in $\gamma$ (resp. $\delta$).
\item[(ii)] If $\gamma$ contains a single short side $\gamma_i$, this side cannot be closed because we assumed $S_{m,n}$ had at least two singularities. In particular, $\gamma$ cannot be reduced to $\gamma_i$ and we can chose another arbitrary saddle connection $\gamma_{i'}$, which is not a side. We define $\mathfrak{G} := \gamma_{i} \cup \gamma_{i'}$. We proceed similarly for $\delta$ if it contains a single short side.
\end{itemize}
\end{Nota}
Further, we will assume that none of the sides of $\mathfrak{G}$ (resp. $\mathfrak{D}$) are adjacent, because otherwise the curve $\gamma$ (resp. $\delta$) would not minimize the length in its homology class. We prove the following:



\begin{Lem}
We have
\begin{equation}\label{eq:mathfrakAB}
|\mathfrak{G} \cap \mathfrak{D}| + \min(\#\mathfrak{G}, \#\mathfrak{D}) \leq \frac{1}{2l_0^2} l(\mathfrak{G}) l(\mathfrak{D}) 
\end{equation}
and, for a saddle connection $\delta_j \in \delta \backslash \mathfrak{D}$ which is not in $\mathfrak{D}$, we have
\begin{equation}\label{eq:mathfrakApasB}
|\mathfrak{G} \cap \delta_j| + 1 \leq \frac{1}{2l_0^2} l(\mathfrak{G}) l(\delta_j) 
\end{equation}
\end{Lem}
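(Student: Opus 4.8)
The plan is to deduce both inequalities from a short case analysis on the internal structure of $\mathfrak{G}$ and $\mathfrak{D}$, feeding into three results already established: the length estimate of Proposition~\ref{prop:study_lengths_BM}, the generic intersection bound of Proposition~\ref{prop:intersections_BM}, and the side-intersection bound of Lemma~\ref{lem:intersections_con_sides}. By construction each of $\mathfrak{G}$, $\mathfrak{D}$ is either (a) a union of at least two non-adjacent sides, or (b) the union of a single side with one extra saddle connection $\gamma_{i'}$ (resp.\ $\delta_{j'}$) which is \emph{not} a side. First I would record the two length inputs used throughout: every side has length at least $l_0$ (the sides of $P(0),P(m-1)$ realize this minimum), while a non-side factor satisfies $l(\gamma_{i'})\ge(\sqrt2\,n_{\gamma_{i'}}+\sqrt2-1)\,l_0$ by Proposition~\ref{prop:study_lengths_BM}. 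Adding a side of length $l_0$ to such a non-side gives the clean bound $l(\mathfrak{G})\ge\sqrt2\,(n_{\gamma_{i'}}+1)\,l_0$ in case (b); this is exactly where the surplus $\sqrt2-1$ is absorbed, via $2(\sqrt2-1)+(\sqrt2-1)^2=1$.

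For \eqref{eq:mathfrakAB}, write $a=\#\mathfrak{G}$, $b=\#\mathfrak{D}\ge2$ and split $|\mathfrak{G}\cap\mathfrak{D}|$ into side/side, side/non-side, and non-side/non-side contributions. Side/side pairs contribute $0$ (sides meet only at singularities); a side against a non-side contributes at most $n-1$ by Lemma~\ref{lem:intersections_con_sides} (a lone side being a trivial collection of non-adjacent sides); and the unique non-side/non-side pair contributes at most $n_{\gamma_{i'}}n_{\delta_{j'}}$ by Proposition~\ref{prop:intersections_BM}. In case (a)/(a) this gives $|\mathfrak{G}\cap\mathfrak{D}|=0$, reducing the claim to $\min(a,b)\le\tfrac12ab$, true since $a,b\ge2$, with equality exactly when $a=b=2$ and all four sides have length $l_0$ — precisely the extremal configuration of Theorem~\ref{theo:BM}. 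In the mixed case (a)/(b) one finds $|\mathfrak{G}\cap\mathfrak{D}|\le n_{\delta_{j'}}-1$ and $\min(a,b)=2$, so the left side is $\le n_{\delta_{j'}}+1$ while the right side is $\ge\sqrt2\,(n_{\delta_{j'}}+1)>n_{\delta_{j'}}+1$; and in case (b)/(b) the left side is at most $(n_{\gamma_{i'}}+1)(n_{\delta_{j'}}+1)-1$ while the right side equals $(n_{\gamma_{i'}}+1)(n_{\delta_{j'}}+1)$, giving a strict inequality.

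For \eqref{eq:mathfrakApasB}, the saddle connection $\delta_j\notin\mathfrak{D}$ is a non-side, so $l(\delta_j)\ge(\sqrt2\,n_{\delta_j}+\sqrt2-1)\,l_0$. If $\mathfrak{G}$ is of type (a), then $|\mathfrak{G}\cap\delta_j|\le n_{\delta_j}-1$ by Lemma~\ref{lem:intersections_con_sides}, so the left side is $\le n_{\delta_j}$, beaten by the right side $\ge\sqrt2\,n_{\delta_j}+\sqrt2-1=n_{\delta_j}+(\sqrt2-1)(n_{\delta_j}+1)$. If $\mathfrak{G}$ is of type (b), splitting off $\gamma_i$ and $\gamma_{i'}$ gives $|\mathfrak{G}\cap\delta_j|\le(n_{\delta_j}-1)+n_{\gamma_{i'}}n_{\delta_j}$, so the left side is $\le n_{\delta_j}(n_{\gamma_{i'}}+1)$, while the right side is $\ge(n_{\gamma_{i'}}+1)\bigl(n_{\delta_j}+1-\tfrac{\sqrt2}{2}\bigr)$, again strictly larger since $1-\tfrac{\sqrt2}{2}>0$.

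The computations are elementary; the only real care is organisational. The main obstacle is ensuring the case split is exhaustive and that the correct intersection estimate is attached to each type of pair — in particular that a lone side may legitimately be fed into Lemma~\ref{lem:intersections_con_sides}, and that the $\min(\#\mathfrak{G},\#\mathfrak{D})$ (resp.\ $+1$) singular-intersection terms are exactly compensated by the length surplus $\sqrt2-1$ furnished by Proposition~\ref{prop:study_lengths_BM}. Tracking equality through case (a)/(a) is what ultimately pins down the extremal pairs claimed in Theorem~\ref{theo:BM}.
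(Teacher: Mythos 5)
Your proof is correct and takes essentially the same route as the paper: the paper offers no argument for this lemma beyond ``from Propositions~\ref{prop:intersections_BM} and \ref{prop:study_lengths_BM} as well as Lemma~\ref{lem:intersections_con_sides} we deduce,'' and your case analysis on whether each of $\mathfrak{G}$, $\mathfrak{D}$ is a union of at least two non-adjacent sides or a single side together with a non-side saddle connection is precisely the omitted deduction, with all the arithmetic checking out (in particular $l_0+(\sqrt{2}\,n+\sqrt{2}-1)l_0=\sqrt{2}(n+1)l_0$ is what absorbs the singular-intersection terms). The only cosmetic slips are that in case (b)/(b) the right-hand side is bounded \emph{below} by $(n_{\gamma_{i'}}+1)(n_{\delta_{j'}}+1)$ rather than equal to it, and the absorption really rests on $1+(\sqrt{2}-1)=\sqrt{2}$ rather than on the identity $2(\sqrt{2}-1)+(\sqrt{2}-1)^2=1$; neither affects the argument.
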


\begin{proof}
We have to distinguish three cases according to whether $\mathfrak{G}$ and $\mathfrak{D}$ contain one side (case (ii) of Notation \ref{nota:GyD}) or not (case (i)).
\begin{itemize}
    \item If $\mathfrak{G}$ and $\mathfrak{D}$ are only made of sides, then $|\mathfrak{G} \cap \mathfrak{D}|=0$ and $l(\mathfrak{G}) = \#\mathfrak{G} \cdot l_0$ as well as $l(\mathfrak{D}) = \#\mathfrak{D} \cdot l_0$. Therefore 
    \begin{align*}
    |\mathfrak{G} \cap \mathfrak{D}| + \min(\#\mathfrak{G}, \#\mathfrak{D})  &=  \min(\#\mathfrak{G}, \#\mathfrak{D}) \\
    & =\frac{\#\mathfrak{G}\cdot  \#\mathfrak{D}}{\max(\#\mathfrak{G}, \#\mathfrak{D})} \\
    & = \frac{l(\mathfrak{G}) l(\mathfrak{D})}{\max(\#\mathfrak{G}, \#\mathfrak{D}) l_0^2} \\
    &\leq \frac{1}{2l_0^2} l(\mathfrak{G}) l(\mathfrak{D})  
    \end{align*}
    This gives \eqref{eq:mathfrakAB}. Now, given a saddle connection $\delta_j$, one can obtain \eqref{eq:mathfrakApasB} by noticing that when $\mathfrak{G}$ is made only of sides, the intersections between $\mathfrak{G}$ and $\gamma_j$ only occur between two segments in the polygonal decomposition of $\gamma_j$. However, the assumption on the sides of $\mathfrak{G}$ not being adjacent gives that, along a sequence of consecutive adjacent segments, an intersection with a side of $\mathfrak{G}$ can only occur every two sides crossed. This gives 
    \[ |\mathfrak{G} \cap \delta_j| \leq n_{\delta_j}-1 \]
    and, since $l(\delta_j) \geq n_{\delta_j} l_0$, we conclude that
    \[ |\mathfrak{G} \cap \delta_j| +1 \leq n_{\delta_j} \leq \frac{l(\mathfrak{G}) l(\delta_j)}{\# \mathfrak{G} \cdot l_0^2} \leq \frac{l(\mathfrak{G}) l(\delta_j)}{2 l_0^2}. \]
    which proves \eqref{eq:mathfrakApasB}.
    \item Now, assume that, say $\mathfrak{G} = \gamma_i \cup \gamma_{i'}$ is as in case (ii) of Notation \ref{nota:GyD}, whereas $\mathfrak{D}$ is only made of sides. In this case, we get from Proposition \ref{prop:study_lengths_BM}
    \[ l(\mathfrak{G}) \geq \left( \sqrt2 n_{\gamma_{i'}} + (\sqrt2 -1) + 1 \right) l_0= \sqrt{2} l_0(n_{\gamma_{i'}}+1) \]
    Further, the above argument for the intersection implies that
    \[
    |\mathfrak{G} \cap \mathfrak{D}| = 0 + |\gamma_{i'} \cap \mathfrak{D}| \leq n_{\gamma_{i'}} -1
    \]
    and then 
    \begin{align*}
    |\mathfrak{G} \cap \mathfrak{D}| + \min(\#\mathfrak{G}, \#\mathfrak{D}) &\leq n_{\gamma_{i'}} +1\\
    &\leq  \frac{l(\mathfrak{G}) l(\mathfrak{D})}{\sqrt{2} l_0 \cdot \#\mathfrak{D} \cdot  l_0 }\\
    & \leq \frac{l(\mathfrak{G}) l(\delta_j)}{2 \sqrt2 l_0^2} < \frac{l(\mathfrak{G}) l(\delta_j)}{2 l_0^2} 
    \end{align*}
    Which proves \eqref{eq:mathfrakAB} in this case. Further, the length estimate $l(\delta_j) > \sqrt{2} n_{\delta_j}$, which holds for any $\delta_j \in \delta \setminus \mathfrak{D}$ by Proposition \ref{prop:study_lengths_BM}, gives
    \begin{align*}
    |\mathfrak{G} \cap \delta_j| +1 &\leq \left(n_{\delta_j}-1 \right) + n_{\gamma_{i'}} n_{\delta_j} +1 = \left(n_{\gamma_{i'}}+1 \right) n_{\delta_j}  \\
    & < \frac{l(\mathfrak{G}) l(\delta_j)}{ (\sqrt2 l_0)^2} = \frac{l(\mathfrak{G}) l(\delta_j)}{2 l_0^2}
    \end{align*}
    which is exactly \eqref{eq:mathfrakApasB}.
    \item It remains to prove \eqref{eq:mathfrakAB} when both $\mathfrak{G} = \gamma_i \cup \gamma_{i'}$ and $\mathfrak{D} = \delta_j \cup \delta_{j'}$ are in case (ii). This comes from the series of inequalities:
    \begin{align*}
        |\mathfrak{G} \cap \mathfrak{D}| + \min(\#\mathfrak{G}, \#\mathfrak{D}) &\leq 0+ (n_{\delta_{j'}}-1) + (n_{\gamma_{i'}}-1) + n_{\gamma_{i'}} n_{\delta_{j'}} + 2 \\
        & = (n_{\gamma_{i'}}+1) (n_{\delta_{j'}}+1) -1\\
        & \leq  \frac{l(\mathfrak{G}) l(\mathfrak{D})}{ (\sqrt2 l_0)^2} -1 \\
        & < \frac{1}{2 l_0^2} l(\mathfrak{G}) l(\delta_j)
        \end{align*}
        as required.
\end{itemize}
\end{proof}

As a consequence, we can partition $\gamma$ (resp. $\delta$) using $\mathfrak{G}$ and its complement, and get
\begin{multline*}
\Int(\gamma,\delta)
\leq  |\mathfrak{G} \cap \mathfrak{D}| + \min(\#\mathfrak{G}, \#\mathfrak{D})+
\sum_{\delta_j \notin \mathfrak{D}}  \left(|\mathfrak{G} \cap \delta_j|+1 \right) +\\ \sum_{\gamma_i \notin \mathfrak{G}} \left( |\gamma_i \cap \mathfrak{D}| +1 \right) +
\sum_{\gamma_i \notin \mathfrak{G}} \sum_{\delta_j \notin \mathfrak{D}}  \left( |\gamma_i \cap \delta_j| +1 \right)
\end{multline*}
which, using Equations \eqref{eq:ineq_intersection_BM} \eqref{eq:mathfrakAB} and \eqref{eq:mathfrakApasB}, gives
\begin{align*}
\Int(\gamma,\delta)
&\leq \frac{1}{2l_0^2} \left(l(\mathfrak{G}) l(\mathfrak{D})+ l(\mathfrak{G}) l(\delta \backslash \mathfrak{D}) + l(\gamma \backslash \mathfrak{G}) l(\mathfrak{D}) + l(\gamma \backslash \mathfrak{G})l(\delta \backslash \mathfrak{D}) \right) \\
&\leq \frac{1}{2l_0^2} l(\gamma) l(\delta),
\end{align*}
as required. Therefore it only remains to prove Proposition \ref{prop:study_lengths_BM} in order to complete the proof of Theorem \ref{theo:BM}. This is the purpose of the next section.

\begin{Rema}
    As in Remark \ref{rema:geometric_intersection_1}, one should notice that this argument proves
    \[ 
    \frac{\sum_{i,j} |\gamma_i \cup \delta_j| + s_{\gamma, \delta}}{l(\gamma) l(\delta)} \leq \frac{1}{2l_0^2}
    \]
    when $\gamma$ and $\delta$ are closed curves which are unions of saddle connections that pass at most once through each singularity\footnote{This property is implicitly used in the assumption that none of the sides of $\mathfrak{G}$ (resp. $\mathfrak{D}$) are adjacent.} and $s_{\gamma, \delta}$ is the number of singularities contained in both $\gamma$ and $\delta$. This property extends if we remove the assumption on the curves passing through each singularity at most once, proving Theorem \ref{theo:BM} for the case where the algebraic intersection is replaced by the geometric intersection.
\end{Rema}

\subsection{Proof of Proposition \ref{prop:study_lengths_BM}}
In this final section, we analyze the polygonal decomposition of a saddle connection $\alpha$ into both adjacent and non-adjacent segments. While some of these segments may have length less than $\left(\sqrt{2} + (\sqrt{2} - 1)\right)l_0$, we will demonstrate that grouping them together in a suitable way yield better estimates. This will be achieved by distinguishing various types of segments based on the sides to which their endpoints belong.\newline

For simplicity, we will normalize the length of the sides of the polygons defining $S_{m,n}$ in order to have $l_0=1$. That is, we will consider sides of length $\frac{\sin k \pi/m}{\sin \pi/m}$ instead of $\sin \frac{k \pi}{m}$, for $1 \leq k \leq m-1$. In particular:
\begin{itemize}
\item Every side which is not a side of $P(0)$ or $P(m-1)$ (or identified to such a side) has length at least $\frac{\sin 2 \pi/m}{\sin \pi/m} = 2\cos \frac{\pi}{m}$ which is greater than $2\sqrt{2}-1$ because $m \geq 8$;
\item The length of the short diagonals of $P(0)$ and $P(m-1)$ is $2\cos \frac{\pi}{n}$ which is greater than $2\sqrt{2}-1$ because $n \geq 8$.
\item We will also need the length of the third shortest sides, that is the length of the long sides of $P(2)$ (resp. $P(m-3)$), which is given by
\[ \frac{\sin 3 \pi/m}{\sin \pi/m} = 4\cos^2 \frac{\pi}{m} - 1.\] 
\end{itemize}
\subsubsection{Short and long segments.}
From the above estimates on the length of sides and diagonals of the polygons $P(i)$ we distinguish two types of segments. The following segments are \emph{long} non-adjacent segments (resp. long pairs of adjacent segments), whose length is constraint to be more than $2 \sqrt{2} - 1$ because of their position within the polygons.
\begin{itemize}
\item[(a)] Non-adjacent segments of $P(i)$ for $2 \leq i \leq m-3$;
\item[(b)] Pairs of adjacent segments which share an endpoint on a side of $P(i)$ for $2 \leq i \leq m-3$;
\item[(c)] Non-adjacent segments of $P(1)$ (resp. $P(m-2)$) whose endpoints lie on two sides of $P(1)$  (resp. $P(m-2)$) which are separated by at least a long side of $P(1)$;
\item[(d)] Non-adjacent segments of $P(0)$ (resp. $P(m-1)$) whose endpoints lie on two sides of $P(0)$  (resp. $P(m-1)$) which are separated by at least two sides (when $n \geq 8$ the length of a short diagonal has length greater than $2\sqrt{2}-1$).
\end{itemize}
These estimates come from the fact that the lengths of segments (a) to (d) are greater than the lengths of the segments connecting the endpoints of the sides of the polygon to which their endpoints belong. This argument relies on the fact that each of the polygons is convex with obtuse angles.\newline

Then, the remaining non-adjacent segments (resp. pairs of adjacent segments) may a priori have length less than $2\sqrt{2}-1$. These are:
\begin{itemize}
\item[(e)] Non-adjacent segments of $P(1)$ (resp. $P(m-2)$) (when $m \geq 3$) whose endpoints lie on two sides located on either side of a short side of $P(1)$  (resp. $P(m-2)$) ; 
\item[(f)] Non-adjacent segments of $P(0)$ (resp. $P(m-1)$) whose endpoints lie on two sides located on either side of a short side of $P(0)$ (resp. $P(m-1)$) ; 
\item[(g)] Pairs of adjacent segments with a segment inside $P(0)$ (resp. $P(m-1)$). 
\end{itemize}

\begin{Rema}
Single adjacent segments are left alone since they do not contribute to the count of $n_{\alpha}$.
\end{Rema}

\subsubsection{Grouping the segments.}
For every segment (resp. pair of adjacent segments) of type (e), (f) and (g), we will construct a pair (resp. a triple) with either the following or preceding segment and show that the length of the pair is at least $3\sqrt{2} = 2\sqrt{2} + (\sqrt{2}-1)$. 
We start with a definition:
\begin{Def}
Let $\alpha_i$ be a segment of type (e) or (f). By definition the endpoints of $\alpha_i$ lie on two sides of a polygon $P(i)$ which are located on either side of a side $s_0$ of $P(i)$. The side $s_0$ will be called the supporting side of the segment $\alpha_i$.
\end{Def}
Then, we construct pairs of segments as follows:
\begin{itemize}
\item If $\alpha_i$ is a segment of type (e), we pair $\alpha_i$ with the segment $\alpha_{i \pm 1}$ which is on the side of the endpoint of $\alpha_i$ closest to its supporting side $s_0$.
\item Conversely, if $\alpha_i$ is a segment of type (f), we pair $\alpha_i$ with the segment $\alpha_{i \pm 1}$ which is on the side of the endpoint of $\alpha_i$ furthest to its supporting side $s_0$.
\item Finally, if $\alpha_i \cup \alpha_{i+1}$ is a pair of adjacent segments of type (g), we group $\alpha_{i} \cup \alpha_{i+1}$ with the segment $\alpha_{i-1}$ or $\alpha_{i+2}$ which is directly follows (or precedes) the adjacent segment of $P(0)$. For example, if $\alpha_{i+1}$ belongs to $P(0)$, we group $\alpha_{i} \cup \alpha_{i+1}$ with the segment $\alpha_{i+2}$.
\end{itemize}

With this construction, it is easily shown that:
\begin{Lem}\label{lem:cases_ghi}
For $m,n \geq 8$, the segment paired with a segment of type (e) or (f) 
(resp. a pair of adjacent segments of type (g)) is a long segment. Further, the combined length of the two (resp. three) segments is greater than $3\sqrt{2} -1$.
\end{Lem}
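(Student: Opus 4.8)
The plan is to prove the two assertions of the lemma separately: first that the segment produced by the pairing construction is \emph{long} (of one of the types (a)--(f), hence of length at least $2\sqrt2-1$), and then that the concatenation of the grouped segments has length at least $3\sqrt2-1 = 2\sqrt2+(\sqrt2-1)$. Throughout I would keep the normalization $l_0=1$ and use freely the side-length data already recorded: every side other than the $P(0)$--$P(1)$ (resp.\ $P(m-2)$--$P(m-1)$) gluing sides has length at least $2\cos\frac{\pi}{m}>2\sqrt2-1$, the short diagonals of $P(0)$ and $P(m-1)$ have length $2\cos\frac{\pi}{n}>2\sqrt2-1$, and the long sides of $P(2)$ (resp.\ $P(m-3)$) have length $4\cos^2\frac{\pi}{m}-1$; each of these inequalities uses $m,n\geq 8$. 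The two-step structure is exactly what is needed by \eqref{eq:length_estimate_BM}, since a short segment paired with a long partner contributes $2$ to $n_\alpha$, and $3\sqrt2-1 = \sqrt2\cdot 2 + (\sqrt2-1)$.

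For the first assertion I would track, in each of the three cases, which polygon the partner lives in, using the gluing pattern (the short sides of $P(i)$, of length $\sin\frac{i\pi}{m}$, glue to $P(i-1)$ and the long sides, of length $\sin\frac{(i+1)\pi}{m}$, glue to $P(i+1)$) together with the transition-diagram constraint on the slope. In case (g) the supporting side $s_0$ of $\alpha_i$ is a short side of $P(1)$, so the endpoint of $\alpha_i$ nearest $s_0$ lies on one of the two long sides of $P(1)$ flanking $s_0$; crossing that side, the partner enters $P(2)$ and is forced to be a non-adjacent segment of $P(2)$, i.e.\ of type (a). In case (h) the partner is taken across the side \emph{furthest} from $s_0$, which sends it into $P(1)$ where, by the choice of side and the slope restriction, it is a non-adjacent segment of type (d) or (f). In case (i) the partner is the neighbour of the $P(0)$-part of the adjacent pair, which lies in $P(1)$ and is again long. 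The same analysis applies verbatim at the other end of the chain via the reflection $i\mapsto m-1-i$.

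For the length bound I would unfold the two (resp.\ three) grouped segments into a single straight segment inside the corresponding chain of unfolded polygons, exactly as in the proof of Lemma~\ref{lem:length_consecutive_short}, so that the problem reduces to estimating the distance between two singular vertices. Concretely, after unfolding the short segment together with its long partner spans a straight segment whose length is bounded below by a sum of the form (short diagonal of an end polygon) $+$ (a side of length at least $2\cos\frac{\pi}{m}$), or (a long side of $P(2)$) $+$ (a chord), where the convexity and obtuse angles of the polygons guarantee that each constituent segment is at least the chord joining the endpoints of the sides it crosses. Substituting the explicit values $2\cos\frac{\pi}{n}$, $2\cos\frac{\pi}{m}$ and $4\cos^2\frac{\pi}{m}-1$ and using $m,n\geq 8$ (for instance $2\cos\frac{\pi}{n}+2\cos\frac{\pi}{m} \geq 4\cos\frac{\pi}{8} > 3\sqrt2-1$) then gives the desired threshold in each case. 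Note that a mere sum of the individual crude bounds $\geq (2\sqrt2-1)+1$ is \emph{not} enough, which is why the combined unfolding estimate is essential.

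The main obstacle is the combinatorial bookkeeping in the first step: one must verify in every sub-case, and at both ends of the polygon chain, that the slope constraint — rather than the gluing pattern alone — genuinely forces the partner into a polygon $P(i)$ with $2\leq i\leq m-3$ or into a configuration separated by enough sides to qualify as type (a)--(f), ruling out the degenerate positions in which the partner would itself be short. Once the partner is known to be long, the length estimate is a routine unfolding-plus-convexity computation; the delicate point is confirming the sharper constant $3\sqrt2-1$ (and not merely $2\sqrt2-1$ plus a side) with the exact trigonometric lengths, which is precisely where the hypotheses $m,n\geq 8$ are used.
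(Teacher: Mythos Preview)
Your proposal is correct and follows essentially the same approach as the paper: a case-by-case analysis of types (g), (h), (i) that first uses the gluing pattern and slope constraint to pin down which polygon the partner lives in (hence that it is long), and then bounds the combined length from below by an explicit vertex-to-vertex distance in the unfolded picture. In particular, your bound $2\cos\frac{\pi}{n}+2\cos\frac{\pi}{m}$ for case (h) is exactly the distance $AC$ the paper uses, and for case (g) the paper likewise reduces to a concrete chord length in $P(1)\cup P(2)$ (their segment $AD$); the only cosmetic difference is that the paper reads these lengths directly off its figures rather than framing the step as an ``unfolding as in Lemma~\ref{lem:length_consecutive_short}.''
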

\begin{proof}
We deal with the cases separately. We first consider a segment $\alpha_i$ of type (e). Up to symmetry, we can assume that $\alpha_i$ is contained in $P(1)$ and has supporting side denoted $s_0$, and that the paired segment is $\alpha_{i+1}$. Therefore, we are in the setting of Figure \ref{fig:case_f_setting}. The direction of $\alpha_{i}$ lies between the the directions of the two segments $AB$ and $AC$, and it is easily shown that the slope of $CE$ is greater than the slope of $AC$, using that
\begin{align*}
\vec{AC}= & \left(1 + 2\cos \frac{\pi}{m} \cos\frac{\pi}{n},2\cos \frac{\pi}{m} \sin\frac{\pi}{n}\right) \\
\vec{CE}= & \bigg(2\cos \frac{\pi}{m} \left( 2\cos\frac{\pi}{n}+\cos \frac{3\pi}{n} \right) + \left(4\cos^2 \frac{\pi}{m}-1\right)\left(1 + \cos \frac{2\pi}{n} \right),\\
& \left(4\cos^2 \frac{\pi}{m}-1\right) \sin\frac{2\pi}{n} + 2\cos \frac{\pi}{m} \sin \frac{3\pi}{n} \bigg).
\end{align*}
Further, the length of $\alpha_i \cup \alpha_{i+1}$ is greater than the length of $AD$ given by
\[ 4 \cos \frac{\pi}{m} \cos \frac{\pi}{n} + 1 + \left(4\cos^2 \frac{\pi}{m} - 1 \right)\]
which is greater than $3 \sqrt{2} - 1$, as required.\newline

We now consider a segment $\alpha_i$ of type (f). Up to symmetry, we can assume that $\alpha_i$ is contained in $P(0)$ and has supporting side denoted $s_0$, and that the paired segment is $\alpha_{i+1}$. In the setting of Figure \ref{fig:short_cylinder_BM}, $\alpha_i$ has its two endpoints on the sides $s_3$ and $s_1$, and its slope is positive. Therefore, the segment $\alpha_{i+1}$ which is paired with $\alpha_i$ is a long non-adjacent segment of $P(1)$ with an endpoint on the side $s_1$ and its other endpoint on one of the sides $s_0, s_2$ or $s_3$. The length of $\alpha_i \cup \alpha_{i+1}$ is therefore greater than the length of $AC$, which is by construction 
\[2\cos \frac{\pi}{n} + 2 \cos \frac{\pi}{m}. \]
This is greater than $3 \sqrt{2} - 1$, as required.\newline

The study is similar in case (g), see Figure \ref{fig:short_cylinder_BM}. 
\end{proof}

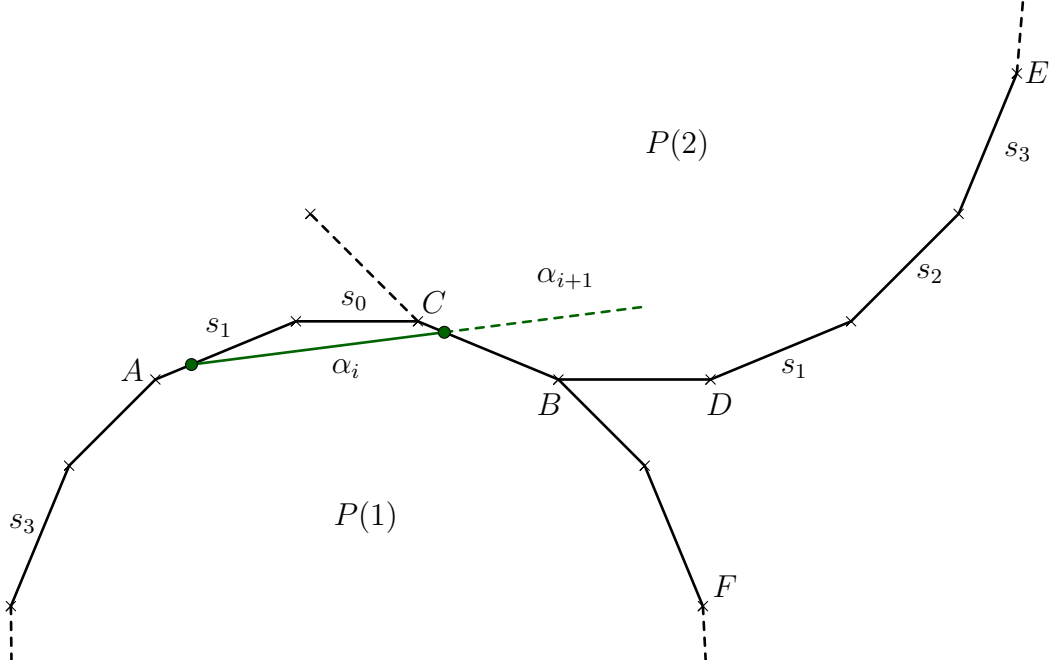
\begin{figure}[h]
\center
\definecolor{qqwuqq}{rgb}{0,0.39215686274509803,0}
\begin{tikzpicture}[line cap=round,line join=round,>=triangle 45,x=1cm,y=1cm,scale=0.9]
\clip(-4.5,-5) rectangle (12,5.5);
\draw [line width=1pt] (-2.07593846584489,-0.8598818672049496)-- (0,0);
\draw [line width=1pt] (0,0)-- (1.8019377358048383,0);
\draw [line width=1pt] (1.8019377358048383,0)-- (3.877876201649728,-0.85988186720495);
\draw [line width=1pt] (3.877876201649728,-0.85988186720495)-- (6.124855805367196,-0.85988186720495);
\draw [line width=1pt] (6.124855805367196,-0.85988186720495)-- (8.200794271212086,0);
\draw [line width=1pt] (8.200794271212086,0)-- (9.789648786188568,1.588854514976482);
\draw [line width=1pt] (9.789648786188568,1.588854514976482)-- (10.649530653393517,3.664792980821373);
\draw [line width=1pt] (-3.3501008581084246,-2.1340442594684843)-- (-2.07593846584489,-0.8598818672049496);
\draw [line width=1pt] (3.877876201649728,-0.85988186720495)-- (5.152038593913263,-2.1340442594684847);
\draw [line width=1pt,dash pattern=on 3pt off 3pt] (1.8019377358048383,0)-- (0.21308322082835573,1.5888545149764826);
\draw [line width=1pt,color=qqwuqq] (-1.5425808197306707,-0.6389578965890503)-- (2.1923461392282944,-0.1617124555624223);
\draw [line width=1pt,dash pattern=on 3pt off 3pt,color=qqwuqq] (2.1923461392282944,-0.1617124555624223)-- (5.107493004626339,0.21078225898537228);
\draw (0.370098616820362,-0.4) node[anchor=north west] {$\alpha_i$};
\draw (3.4,0.9449970126053061) node[anchor=north west] {$\alpha_{i+1}$};
\draw (0.4,-2.5) node[anchor=north west] {$P(1)$};
\draw (5,3) node[anchor=north west] {$P(2)$};
\draw (-2.75,-0.4) node[anchor=north west] {$A$};
\draw (3.4,-0.9) node[anchor=north west] {$B$};
\draw [line width=1pt] (-4.209982725313374,-4.209982725313376)-- (-3.3501008581084246,-2.1340442594684843);
\draw (1.7,0.6) node[anchor=north west] {$C$};
\draw (-1.5,0.2) node[anchor=north west] {$s_1$};
\draw (0.5,0.6) node[anchor=north west] {$s_0$};
\draw (7,-0.4) node[anchor=north west] {$s_1$};
\draw (9,1) node[anchor=north west] {$s_2$};
\draw (10.3,2.8) node[anchor=north west] {$s_3$};
\draw (5.9,-0.9) node[anchor=north west] {$D$};
\draw (10.6,4) node[anchor=north west] {$E$};
\draw (6,-3.6) node[anchor=north west] {$F$};
\draw [line width=1pt] (5.152038593913263,-2.1340442594684847)-- (6.011920461118211,-4.209982725313377);
\draw (-4.4,-2.7) node[anchor=north west] {$s_3$};
\draw [line width=1pt,dash pattern=on 3pt off 3pt] (10.649530653393517,3.664792980821373)-- (10.740823589989088,4.750767644960777);
\draw [line width=1pt,dash pattern=on 3pt off 3pt] (-4.209982725313374,-4.209982725313376)-- (-4.196826142006233,-5.501026995946773);
\draw [line width=1pt,dash pattern=on 3pt off 3pt] (6.011920461118211,-4.209982725313377)-- (6.078754565353606,-5.524813062398994);
\begin{scriptsize}
\draw [color=black] (0,0)-- ++(-2pt,-2pt) -- ++(4pt,4pt) ++(-4pt,0) -- ++(4pt,-4pt);
\draw [color=black] (1.8019377358048383,0)-- ++(-2pt,-2pt) -- ++(4pt,4pt) ++(-4pt,0) -- ++(4pt,-4pt);
\draw [color=black] (-2.07593846584489,-0.8598818672049496)-- ++(-2pt,-2pt) -- ++(4pt,4pt) ++(-4pt,0) -- ++(4pt,-4pt);
\draw [color=black] (3.877876201649728,-0.85988186720495)-- ++(-2pt,-2pt) -- ++(4pt,4pt) ++(-4pt,0) -- ++(4pt,-4pt);
\draw [color=black] (6.124855805367196,-0.85988186720495)-- ++(-2pt,-2pt) -- ++(4pt,4pt) ++(-4pt,0) -- ++(4pt,-4pt);
\draw [color=black] (8.200794271212086,0)-- ++(-2pt,-2pt) -- ++(4pt,4pt) ++(-4pt,0) -- ++(4pt,-4pt);
\draw [color=black] (9.789648786188568,1.588854514976482)-- ++(-2pt,-2pt) -- ++(4pt,4pt) ++(-4pt,0) -- ++(4pt,-4pt);
\draw [color=black] (10.649530653393517,3.664792980821373)-- ++(-2pt,-2pt) -- ++(4pt,4pt) ++(-4pt,0) -- ++(4pt,-4pt);
\draw [color=black] (-3.3501008581084246,-2.1340442594684843)-- ++(-2pt,-2pt) -- ++(4pt,4pt) ++(-4pt,0) -- ++(4pt,-4pt);
\draw [color=black] (5.152038593913263,-2.1340442594684847)-- ++(-2pt,-2pt) -- ++(4pt,4pt) ++(-4pt,0) -- ++(4pt,-4pt);
\draw [color=black] (0.21308322082835573,1.5888545149764826)-- ++(-2pt,-2pt) -- ++(4pt,4pt) ++(-4pt,0) -- ++(4pt,-4pt);
\draw [fill=qqwuqq] (-1.5425808197306707,-0.6389578965890503) circle (2.5pt);
\draw [fill=qqwuqq] (2.1923461392282944,-0.1617124555624223) circle (2.5pt);
\draw [color=black] (-4.209982725313374,-4.209982725313376)-- ++(-2pt,-2pt) -- ++(4pt,4pt) ++(-4pt,0) -- ++(4pt,-4pt);
\draw [color=black] (6.011920461118211,-4.209982725313377)-- ++(-2pt,-2pt) -- ++(4pt,4pt) ++(-4pt,0) -- ++(4pt,-4pt);
\end{scriptsize}
\end{tikzpicture}
\caption{A segment $\alpha_i$ of type (e). The segment $\alpha_{i+1}$ can have an endpoint $\alpha_{i+1}^+$ on either side $s_1,s_2$ or $s_3$.}
\label{fig:case_f_setting}
\end{figure}

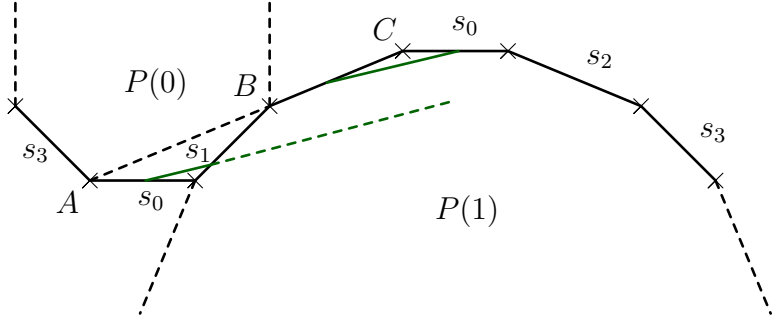
\begin{figure}[h]
\center
\definecolor{qqwuqq}{rgb}{0,0.39215686274509803,0}
\begin{tikzpicture}[line cap=round,line join=round,>=triangle 45,x=1cm,y=1cm, scale = 1.4]
\clip(-1,-1.5) rectangle (7,1.7);
\draw [line width=1pt] (0,0)-- (1,0);
\draw [line width=1pt] (1,0)-- (1.7071067811865475,0.7071067811865476);
\draw [line width=1pt] (0,0)-- (-0.7071067811865475,0.7071067811865475);
\draw [line width=1pt] (1.7071067811865475,0.7071067811865476)-- (2.9707814551021867,1.2305379695498038);
\draw [line width=1pt] (2.9707814551021867,1.2305379695498038)-- (3.9707814551021867,1.2305379695498038);
\draw [line width=1pt] (3.9707814551021867,1.2305379695498038)-- (5.2344561290178255,0.7071067811865474);
\draw [line width=1pt] (5.2344561290178255,0.7071067811865474)-- (5.941562910204373,0);
\draw [line width=1pt,dash pattern=on 3pt off 3pt] (-0.7071067811865475,0.7071067811865475)-- (-0.7071067811865475,1.7071067811865475);
\draw [line width=1pt,dash pattern=on 3pt off 3pt] (1.7071067811865475,0.7071067811865476)-- (1.7071067811865475,1.7071067811865475);
\draw [line width=1pt,dash pattern=on 3pt off 3pt] (0,0)-- (1.7071067811865475,0.7071067811865476);
\draw [line width=1pt,color=qqwuqq] (2.25,0.93)--(3.5,1.23);
\draw [line width=1pt,color=qqwuqq] (0.53,0)--(1.15,0.15);
\draw [line width=1pt,color=qqwuqq,dash pattern=on 3pt off 3pt] (1.15,0.15)--(3.43,0.75);
\draw (0.2192317991809927,1.1692332815314035) node[anchor=north west] {$P(0)$};
\draw (0.35,0) node[anchor=north west] {$s_0$};
\draw (0.8,0.45) node[anchor=north west] {$s_1$};
\draw (3.3308632387419945,1.663604631742028) node[anchor=north west] {$s_0$};
\draw (0,0) node[anchor=north east] {$A$};
\draw (1.7,0.7) node[anchor=south east] {$B$};
\draw (2.8,1.23) node[above] {$C$};
\draw (4.610412615757733,1.3437172874880945) node[anchor=north west] {$s_2$};
\draw (5.686397319157332,0.6457812636613304) node[anchor=north west] {$s_3$};
\draw (-0.75,0.45) node[anchor=north west] {$s_3$};
\draw (3.1757663445582685,-0.032767648392468034) node[anchor=north west] {$P(1)$};
\draw [line width=1pt,dash pattern=on 3pt off 3pt] (1,0)-- (0.4765688116367429,-1.2636746739156395);
\draw [line width=1pt,dash pattern=on 3pt off 3pt] (5.941562910204373,0)-- (6.46499409856763,-1.2636746739156395);
\begin{scriptsize}
\draw [color=black] (0,0)-- ++(-2pt,-2pt) -- ++(4pt,4pt) ++(-4pt,0) -- ++(4pt,-4pt);
\draw [color=black] (1,0)-- ++(-2pt,-2pt) -- ++(4pt,4pt) ++(-4pt,0) -- ++(4pt,-4pt);
\draw [color=black] (1.7071067811865475,0.7071067811865476)-- ++(-2pt,-2pt) -- ++(4pt,4pt) ++(-4pt,0) -- ++(4pt,-4pt);
\draw [color=black] (-0.7071067811865475,0.7071067811865475)-- ++(-2pt,-2pt) -- ++(4pt,4pt) ++(-4pt,0) -- ++(4pt,-4pt);
\draw [color=black] (2.9707814551021867,1.2305379695498038)-- ++(-2pt,-2pt) -- ++(4pt,4pt) ++(-4pt,0) -- ++(4pt,-4pt);
\draw [color=black] (3.9707814551021867,1.2305379695498038)-- ++(-2pt,-2pt) -- ++(4pt,4pt) ++(-4pt,0) -- ++(4pt,-4pt);
\draw [color=black] (5.2344561290178255,0.7071067811865474)-- ++(-2pt,-2pt) -- ++(4pt,4pt) ++(-4pt,0) -- ++(4pt,-4pt);
\draw [color=black] (5.941562910204373,0)-- ++(-2pt,-2pt) -- ++(4pt,4pt) ++(-4pt,0) -- ++(4pt,-4pt);
\end{scriptsize}
\end{tikzpicture}
\caption{The segment paired with a pair of adjacent segments of type (g) is a long segment contained in $P(1)$ which has one endpoint on $s_1$ and one endpoint on either of the sides $s_0, s_2$ or $s_3$. This is similar for the segment paired with a non-adjacent segment of type (f) whose supporting side is $s_0$. The main reason this holds is because the vertices $A, B$ and $C$ are aligned (the segments $AB$ and $BC$ both make an angle $\frac{\pi}{n}$ with the horizontal).}
\label{fig:short_cylinder_BM}
\end{figure}

\begin{Rema}
Contrary to the case of the $(4m+2)-$gon, there is no need to consider trips through short cylinders. In fact, trips through short cylinders have to be considered if one wants to prove Proposition \ref{prop:study_lengths_BM} for $3 \leq m \leq 7$. This is one of the reasons we choose to assume $m \geq 8$.
\end{Rema}

\subsubsection{There are no overlaps.}
We now show that, for $m \geq 8$, the pairs cannot overlap. For this, first remark that:
\begin{itemize}
\item A segment of type (e) is always grouped to a long segment of $P(2)$ (or $P(m-3)$) which has at least one of its endpoints on a short segment of $P(2)$ (or $P(m-3)$), namely the endpoint shared with the segment of type (e).
\item The segments of type (f) and the pairs of adjacent segments of type (g) are grouped to a long segment of $P(1)$ (or $P(m-2)$).
\end{itemize}
In particular, the only possible overlaps are among
\begin{enumerate}
\item Two pairs formed by segments of type (e);
\item Pairs formed by segments of type (f) and/or by triples with two adjacent segments of type (g);
\end{enumerate}
We deal with the two cases separately:
\begin{enumerate}
\item In the first case we consider a pair of segments containing a segment of type (e) denoted $\alpha_{i}$. As in the proof of Lemma \ref{lem:cases_ghi}, we can assume up to symmetry that we are in the setting of Figure \ref{fig:case_f_setting}. Also recall from the proof of Lemma \ref{lem:cases_ghi} that the endpoint $\alpha_{i+1}^+$ of $\alpha_{i+1}$ which is not shared with $\alpha_i$ belong to either side $s_1$, $s_2$ or $s_3$. Now,
\begin{itemize}
\item If $\alpha_{i+1}^+$ of $\alpha_{i+1}$ is a vertex of $P(2)$, the segment $\alpha_{i+1}$ is obviously only in the pair with $\alpha_i$.
\item If $\alpha_{i+1}^+$ belongs to the (interior of the) side $s_1$, the next segment $\alpha_{i+2}$ is either an adjacent segment of $P(1)$ or a segment of type (e). In the first case, the adjacent segment $\alpha_{i+2}$ is followed by another adjacent segment $\alpha_{i+3}$, contained in $P(0)$, and they form a pair of type (g). This pair is grouped with the next segment $\alpha_{i+4}$, and therefore there is no overlap. In the second case, considering the slope of $\alpha_{i+2}$ we deduce that the endpoint of $\alpha_{i+2}$ closer to its supporting side (which is also $E$) is the one shared with $\alpha_{i+3}$, and therefore the pairs formed with the two segments of type (e) $\alpha_i$ and $\alpha_{i+2}$ do not overlap.
\item If the endpoint $\alpha_{i+1}^+$ belongs to $s_2$, the segment $\alpha_{i+2}$ is a long segment of $P(3)$ (or an adjacent segment which is part of a long pair of adjacent segments) and we conclude that $\alpha_{i+1}$ only belongs to a pair with $\alpha_i$.

\item Finally, if the endpoint $\alpha_{i+1}^+$ belongs to $s_3$, it is immediately seen from slope considerations that $\alpha_ {i+2}$ must be a long segment of $P(1)$, and therefore $\alpha_{i+1}$ only belongs to the pair with $\alpha_{i}$. More precisely, since the direction of $\alpha_{i}$ (and therefore of $\alpha_{i+2}$) lies between the horizontal and the direction of the segment $AC$, see Figure \ref{fig:case_f_setting}), we obtain that $\alpha_{i+2}$ has at its second endpoint between $C$ and $F$ and there are at least three separating sides between its endpoints. 
\end{itemize}
\item In the second case we consider a pair containing a segment of type (f) (resp. a pair of adjacent segment of type (g) along with its paired non-adjacent segment), say contained in $P(0)$ and $P(1)$, and we let $s_0$ be the supporting side of the segment of type (f) (resp. the side of $P(0)$ --and $P(1)$-- which lies between the two adjacent segments, denoted $s_0$ in the setting of Figure \ref{fig:short_cylinder_BM}). Therefore, only three cases can occur: the long segment $\alpha_i$ which is grouped to the segment of type (f) (resp. to the pair of adjacent segments of type (g)) can
\begin{itemize}
\item have an endpoint on the side $s_0$, in which case the next segment $\alpha_{i+1}$ is an adjacent segment of $P(0)$, followed and preceded by non-adjacent segments, and therefore it does not contribute to the count of pairs of adjacent segments and is left alone. Therefore, there is no overlap in this case;
\item Have an endpoint on the side $s_2$ (endpoints included) and in this case either there is no next segment if the endpoint is a vertex or the next segment belong to $P(2)$ and therefore it is not grouped to any other segment. This gives that $\alpha_i$ do not belong to any other pair (resp. triple);
\item Or have an endpoint on the side $s_3$ (endpoints excluded), in which case the next segment is either a long segment of $P(0)$ (because $n \geq 6$), and this gives that $\alpha_i$ do not belong to any other pair (resp. group of segment). Or the next segment has type (f) with supporting side $s_0$. In this case, since the slope of $\alpha$ is positive the endpoint of this segment further to $s_0$ is the one which is not shared with $\alpha_i$, and therefore this segment of type (f) is not paired with $\alpha_i$. This shows that $\alpha_i$ is not paired to any other segment (resp. pair of adjacent segments).
\end{itemize}
\end{enumerate}

As a conclusion, we managed to subdivide the saddle connection $\alpha$ into groups of segments which have length at least $\sqrt{2}k + (\sqrt{2}-1)$ while they contain $k=1$ or $2$ non-adjacent segments and pairs of adjacent segments. This gives the required result.

\begin{Rema}
For $3 \leq m \leq 7$, there are other types of short segments, and some of the pairs of segments constructed may actually overlap. In this case, one should group the overlapping pairs of segments into triples and obtain adequate length estimates. This is the second reason why we assume $m \geq 8$. 
\end{Rema}

\bibliographystyle{plain}
\bibliography{KVol_bibli}
\end{document}